\newtheorem{thm}{Theorem}[section]
\newtheorem{cor}[thm]{Corollary}
\newtheorem{lem}[thm]{Lemma}
\newtheorem{prop}[thm]{Proposition}
\theoremstyle{definition}
\theoremstyle{remark}
\newtheorem{rem}[thm]{Remark}
\numberwithin{equation}{section}
\newcommand{\Real}{\mathbb{R}}
\title[Two simple criterion to obtain controllability and stabilization]{Two simple criterion to obtain exact controllability and stabilization of a linear family of dispersive PDE's on a Periodic Domain}
	\author{Francisco J. Vielma Leal}\address{IMECC-UNICAMP, Rua Sérgio Buarque de Holanda, 651
	13083-859, Campinas, SP, Brasil}
\email{vielma@ime.unicamp.br;\;fvielmaleal7@gmail.com}
\author{Ademir Pastor }\address{IMECC-UNICAMP, Rua Sérgio Buarque de Holanda, 651, 13083-859, Campinas, SP, Brasil}
\email{apastor@ime.unicamp.br}
\subjclass{Primary: 93B05, 93D15, 35J10, 37L50}
\keywords{Dispersive equations; Well-posedness; Controllability; Stabilization;  Smith equation;  dispersion generalized Benjamin-Ono equation; fourth-order Schrödinger equation; Higher-order Schrödinger equations.}
\begin{document}

\begin{abstract}
 In this work, we use the classical moment method to find a practical and simple criterion to determine  if a family of linearized Dispersive equations on a periodic domain is exactly controllable and exponentially stabilizable with any given decay rate in $H_{p}^{s}(\mathbb{T})$ with $s\in \mathbb{R}.$ We apply these results to prove that the linearized Smith equation, the linearized dispersion-generalized Benjamin-Ono equation,  the linearized fourth-order Schr\"odinger equation, and  the Higher-order Schr\"odinger equations are exactly controllable and exponentially stabilizable with any given decay rate in $H_{p}^{s}(\mathbb{T})$ with $s\in \mathbb{R}.$
\end{abstract}

\maketitle
\section{Introduction}

In this work, we consider a family of linear one-dimensional dispersive equations on the periodic domain $\mathbb{T}:=\mathbb{R}/(2\pi \mathbb{Z})$, and investigate its control properties from the point of view of distributed control.   Specifically, we consider the family of  equations
\begin{equation}\label{BO}
    \partial_{t}u- \partial_{x}\mathcal{A}u=f(x,t), \;\;\;\;x\in \mathbb{T},\;\;t\in \mathbb{R},
\end{equation}
where $u=u(x,t)$ denotes a real or complex-valued function of two real variables $x$ and $t,$ the forcing term $f=f(x,t)$ is added to the equation as a control input supported in a given open set $\omega\subset \mathbb{T}$, and $\mathcal{A}$ denotes a linear Fourier multiplier operator.  We assume that the multiplier $\mathcal{A}$ is of order $r-1,$ for some $r\in  \mathbb{R},$ with $r \geq 1$, that is, the symbol $a:\mathbb{Z} \rightarrow \mathbb{R}$ is given by
\begin{equation}\label{dispeop}
    \widehat {\mathcal{A}u}(k):=a(k)\widehat{u}, \;\;k\in \mathbb{Z},
\end{equation}
where $\widehat{u}$ stands for the Fourier transform of $u$ (see \eqref{foutrans}), and
\begin{equation}\label{limite}
    |a(k)|\leq C |k|^{r-1}, \qquad|k|\geq k_0,
\end{equation}
for some $k_0\geq0$ and some positive constant $C$.

Equation \eqref{BO} encompass a wide class of linear dispersive equations. For instance, the well-known linearized Korteweg-de Vries equation ($\mathcal{A}=-\partial_{x}^2$), the Schr\"odinger equation ($\mathcal{A}=i\partial_{x}$), the Benjamin-Ono equation ($\mathcal{A}=\mathcal{H}\partial_{x}$, where $\mathcal{H}$ stands for the Hilbert transform), and the Benjamin equation ($\mathcal{A}=\partial_{x}^2+\alpha\mathcal{H}\partial_{x}$, where $\alpha$ is a positive constant).
In the literature there is a wide range of references studing
controllability and stabilization properties  of linear and nonlinear dispersive equations. Specifically, for the Korteweg-de Vries equation  (KdV) equation, the results regarding  controllability and stabilization can be found in \cite{14,10, Zhang 1, Russell and Zhang, Rosier 1, Coron Crepau, Menzala Vasconcellos Zuazua, Rosier and Zhang 2}. For the
Schr\"odinger equation we refer the reader to \cite{Laurent Camille,  Laurent, Dehman Gerard Lebeau, Rosier Lionel Zhang, Rosier Lionel Zhang 2}. Also, the  study on the controllability and stabilization for the Benjamin and Benjamin-Ono (BO) equations  have received attention in the last decade, see \cite{Manhendra and Francisco, Manhendra and Francisco 2} and   \cite{1,Laurent Linares and Rosier, Linares Rosier},  respectively. So, our main goal in this paper is to study all these equations in a unified way.

Under the above conditions, the linear operator $\mathcal{A}$ commutes with derivatives and may be seen as a self-adjoint operator on $L^{2}_{p}(\mathbb{T})$ (see Section \ref{preliminares} for notations).
Note also that solutions of the homogeneous equation \eqref{BO} ($f=0$) with initial data $u(0)=u_{0}$ conserve the ``mass'' in the sense that
$$2\pi \widehat{u}(0,t)=2 \pi \widehat{u_{0}}(0),\;\;\text{for all}\;t\in \mathbb{R},$$
where $\widehat{u}$ stands for the Fourier transform of $u$ in the space variable (see \eqref{foutrans}).

Before proceeding let us make clear the problems we are interested in.\\

\noindent
\textbf{\emph{Exact controllability problem:}} Let $s\in \mathbb{R}$ and $T>0$ be given. Let
 $u_{0}$ and $u_{1}$ in  $ H_{p}^{s}(\mathbb{T})$ be given with $\widehat{u_{0}}(0)=\widehat{u_{1}}(0).$  Can one find a control input $f$ such that the unique solution $u$ of the initial-value problem (IVP)
\begin{equation}\label{2D-BO1}
	\begin{cases}
		\partial_{t}u - \partial_{x}\mathcal{A}u
		=f(x,t), \;\;x\in \mathbb{T},\;\;t\in \mathbb{R},\\
		u(x,0)=u_{0}(x) 
	\end{cases}
\end{equation}
is defined until time $T$ and satisfies $u(x,T)=u_{1}(x) \;\;\text{for all}\;\; x\in \mathbb{T}$?\\

\noindent
\textbf{\emph{Asymptotic stabilizability problem:}} Let $s\in \mathbb{R}$ and  $u_{0}\in H_{p}^{s}(\mathbb{T})$ be given.
Can one define a feedback control law $f=Ku$, for some liner operator $K$, such that the resulting closed-loop system
\begin{equation}\label{2D-BO2}
	\begin{cases}
	 \partial_{t}u - \partial_{x}\mathcal{A}u
	=Ku, \;\;x\in \mathbb{T},\;\;t\in \mathbb{R}^{+},\\
	  u(x,0)=u_{0}(x),
	\end{cases}
\end{equation}
is globally well-defined and asymptotically stable  to an equilibrium point as $t\rightarrow +\infty$?\\

In the present manuscript we use the classical Moment method  (see \cite{Russell})  and a  generalization of Ingham's inequality see (see {\cite[Theorem 4.6]{7}} and \cite{8}), to find a practical  criterion  regarding the eigenvalues associated with the operator $\partial_{x}\mathcal{A}$ to determine if equation \eqref{BO} is exactly controllable and exponentially stabilizable. Therefore, we were able to extend the  techniques used by the authors in \cite{1,  Manhendra and Francisco, Rosier Lionel Zhang} to a wide class of linearized dispersive equations on a periodic domain.

\begin{rem}\label{systemsnot}
Generalizing these techniques to linear systems of two or more equations require additional efforts because the mixed dispersive terms present in the equations generally induce a modification of the orthogonal basis we are considering on $L^{2}_{p}(\mathbb{T})\times L^{2}_{p}(\mathbb{T})$ (see for instance \cite[Proposition. 2.2]{Capistrano  y Andressa}). This usually implies  a loss of regularity of the considered controls (see also \cite[Theorem 2.23]{Micu Ortega Rosier and Zhang}).
\end{rem}

As usual in control theory for dispersive models (see \cite{1, 10, 14, Manhendra and Francisco}), in order   to keep the mass  of \eqref{2D-BO1} conserved,  we define a bounded linear  operator $G:H_{p}^{s}(\mathbb{T})\to H_{p}^{s}(\mathbb{T})$ in the following way: let $g$ be a real non-negative  function in $C_p^\infty(\mathbb{T})$ such that
\begin{equation}\label{gcondition}
	2\pi\widehat{g}(0)=\int_{0}^{2\pi}g(x)\;dx=1,
\end{equation}
and assume
$\text{supp} \;g=\omega \subset \mathbb{T},$ where
$\omega=\{x\in \mathbb{T}: g(x)>0 \}$ is an open interval. The operator $G$ is then defined as
\begin{equation}\label{EQ1}
	G(\phi):=g\phi-g\,\langle\phi,g\rangle, \qquad \phi\in H_{p}^{s}(\mathbb{T}),
\end{equation}
where the first product must be understood in the periodic distributional sense and $\langle\cdot,\cdot\rangle$ denotes the pairing between $\mathscr{P}'$ and $\mathscr{P}$ (see notations below). 

The control input $f$ is then chosen to be of the form $f(\cdot,t)=G(h(\cdot,t)),$ $t\in [0,T]$. As a consequence, the function $h \in L^{2}([0,T];H_{p}^{s}(\mathbb{T}))$ is now viewed as the new control function.

\begin{rem}
	Some remarks concerning the operator $G$ are in order.
\begin{enumerate}
	\item It is not difficult to see that $G$ is self-adjoint in $L^{2}_{p}(\mathbb{T})$ (see \cite[Proposition 3.2]{Manhendra and Francisco}). In addition, the authors in \cite[Remark 2.1]{1} and \cite[Lemma 2.20]{Micu Ortega Rosier and Zhang} showed that  for any $s\in \mathbb{R}$ the operator $G$ acting from $L^{2}\left([0,T]; H^{s}_{p}(\mathbb{T})\right)$ into $L^{2}\left([0,T]; H^{s}_{p}(\mathbb{T})\right)$
	is linear and bounded. 
	
	\item When $s\geq0$ we may write
	$$
	G(\phi)(x)=g(x)\left[ \phi(x)-\int_0^{2\pi} \phi(y)g(y)\,dy \right],
	$$
	which is exactly the operator defined, for instance, in   \cite{1, 10, 14, Manhendra and Francisco}.
	
	\item By recalling that for any $\phi\in \mathscr{P}'$ and $g\in \mathscr{P}$ (see \cite[Corollary 3.167]{6})
	$$
	\langle\phi,g\rangle= 2\pi \sum_{k \in \mathbb{Z}}\widehat{\phi}(k)\widehat{g}(-k),
	$$
	in view of \eqref{gcondition}, we obtain
	\[
	\begin{split}
		\widehat{G(\phi)}(0)=\widehat{\phi}\ast \widehat{g}(0)- \widehat{g}(0)\,\langle\phi,g\rangle
		=\sum_{j \in \mathbb{Z}}\widehat{\phi}(j)\widehat{g}(-j)-\widehat{g}(0)\,\langle\phi,g\rangle
		=0,
	\end{split}
	\]
	where the convolution of two sequences of complex numbers $(\alpha_k)_{k\in\mathbb{Z}}$ and $(\beta_k)_{k\in\mathbb{Z}}$ is the sequence $((\alpha\ast\beta)_k)_{k\in\mathbb{Z}}$ defined by
	$$
	(\alpha\ast\beta)_k=\sum_{j\in \mathbb{Z}}\alpha_j\beta_{k-j}.
	$$
	This implies that any solution $u$ of \eqref{2D-BO1} (with $f(x,t)=G(h(x,t))$) conserves the quantity $2\pi \widehat{u}(0,t)$. In particular, if $\widehat{u_0}(0)=0$ then $2\pi \widehat{u}(0,t)=0$, for any $t\in[0,T]$.
\end{enumerate}
\end{rem}

Next, we turn attention to our criteria to obtain the controllability and stabilization of equation \eqref{BO}. As we will see, they directly link these problems  with some specific properties of the eigenvalues and eigenfunctions associated to the operator $\partial_{x}\mathcal{A}$. To derive our first criterion regarding exact controllability,  we assume that $\partial_{x}\mathcal{A}$ has a countable number of eigenvalues that are all simple,  except by a finite number that have finite multiplicity. Specifically, we will assume that the following hypotheses hold:
\vskip.2cm
\begin{itemize}
    \item [$(H1)$] $\partial_{x}\mathcal{A}\psi_{k}=i\lambda_{k} \psi_{k},$ where  $\psi_{k}$ is defined in \eqref{spi} and  $\lambda_{k}=k a(k),$ for all $k\in \mathbb{Z}.$
\end{itemize}
\vskip.2cm
Note we are counting multiplicities, implying that  the eigenvalues in the sequence $\{i \lambda_{k}\}_{k\in \mathbb{Z}}$ are not necessarily distinct. For each $k_{1}\in \mathbb{Z},$ we set $I(k_{1}):=\{k\in \mathbb{Z}: \lambda_{k}=\lambda_{k_{1}}\}$ and $m(k_{1}):=|I(k_{1})|,$
where $|I(k_{1})|$ denotes the number of elements in $I(k_{1}).$
Concerning the quantity  $m(k_{1})$, we assume the following:
\vskip.2cm
\begin{itemize}
    \item [$(H2)$] $m(k_{1})\leq n_{0},$ for some $n_{0}\in \mathbb{N}$ and for all $k_{1}\in \mathbb{Z},$
\end{itemize}
\vskip.2cm
and
\vskip.2cm
\begin{itemize}
    \item[$(H3)$] there exists $k_{1}^{\ast} \in \mathbb{N}$ such that $m(k_{1})=1,$ for all $k_{1}\in \mathbb{Z}$ with $|k_{1}|\geq k_{1}^{\ast}.$ 
\end{itemize}
\vskip.2cm

Assumptions $(H2)$ and $(H3)$ together say that  all eigenvalues $i\lambda_k$ have finite multiplicity. In addition, they are simple eigenvalues for sufficiently large indices.

If we count only distinct eigenvalues, we may obtain a sequence
$\{\lambda_{k}\}_{k \in \mathbb{I}}$, $\mathbb{I}\subseteq\mathbb{Z}$, with the
property that $\lambda_{k_{1}}\neq \lambda_{k_{2}}$, for any $k_{1},k_{2}\in \mathbb{I}$ with $k_{1}\neq k_{2}$. Our main result at this point reads as follows.

\begin{thm}[Criterion I]\label{ControlLa}
Let $s\in \mathbb{R}$ and assume $(H1), \;(H2),$ and $(H3).$ Suppose that
  \begin{equation}\label{gammaseg}
  \begin{split}
  \gamma&:=\inf_{\substack{k,n\in \mathbb{I}\\k\neq n}}|\lambda_{k}-\lambda_{n}|>0
  \end{split}
  \end{equation}
  and
   \begin{equation}\label{gammalinha}
  \begin{split}
\gamma'&:=\underset{S\subset \mathbb{I}}{\sup}\;
\underset{k\neq n}{\underset{k,n \in \mathbb{I}\backslash S}{\inf}}
|\lambda_{k}-\lambda_{n}|>0,	
  \end{split}
  \end{equation}
where $S$ runs over all finite subsets of $\mathbb{I}.$
	Then for any $T> \frac{2\pi}{\gamma'}$ and   for each $u_{0},\; u_{1}\in H_{p}^{s}(\mathbb{T})$ with $\widehat{u_{0}}(0)=\widehat{u_{1}}(0),$ there exists a function $h\in L^{2}([0,T];H_{p}^{s}(\mathbb{T}))$
	such that the unique solution $u$  of the non-homogeneous system
	\begin{equation}\label{introduc2}
\begin{cases}
u\in C([0,T];H^{s}_{p}(\mathbb{T})), \hbox{}\\
   \partial_{t}u(t)= \partial_{x}\mathcal{A}u(t)+ G(h)(t)\in H_{p}^{s-r}(\mathbb{T}),  t\in (0,T),\\
    u(0)=u_{0}\in H_{p}^{s}(\mathbb{T}), 
\end{cases}
\end{equation}
	satisfies $u(T)=u_{1}.$ Furthermore,
	\begin{equation}\label{hboun}
	   \|h\|_{L^{2}([0,T];H_{p}^{s}(\mathbb{T}))} \leq \nu\; (\|u_{0}\|_{H_{p}^{s}(\mathbb{T})}
	+\|u_{1}\|_{H_{p}^{s}(\mathbb{T})}),
	\end{equation}
for some positive constant $\nu \equiv \nu(s,g,T).$
\end{thm}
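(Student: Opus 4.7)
My plan is to apply the classical moment method. Expanding $u$ in the eigenbasis $\{\psi_k\}$ of $\partial_x\mathcal{A}$ and using Duhamel's formula for the mild solution of \eqref{introduc2}, the target condition $u(T)=u_1$ is equivalent to the moment equations
$$
d_k := e^{-i\lambda_k T}\widehat{u_1}(k)-\widehat{u_0}(k) = \int_0^T e^{-i\lambda_k s}\,\widehat{G(h(s))}(k)\,ds,\qquad k\in\mathbb{Z}.
$$
The $k=0$ case is automatic from $\lambda_0=0$, $\widehat{u_0}(0)=\widehat{u_1}(0)$, and $\widehat{G(\phi)}(0)=0$. Reindexing by distinct eigenvalues via $n\in\mathbb{I}$, so that $k\in I(n)$ share the common eigenvalue $\lambda_n$, the task reduces to solving this moment system.

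Hypotheses $(H2)$--$(H3)$ together with \eqref{gammaseg}--\eqref{gammalinha} and $T>2\pi/\gamma'$ allow me to invoke the generalized Ingham inequality \cite[Theorem 4.6]{7} (see also \cite{8}), yielding a biorthogonal family $\{\theta_n\}_{n\in\mathbb{I}}\subset L^2(0,T)$ with
$$
\int_0^T \theta_n(s)\,e^{-i\lambda_m s}\,ds = \delta_{nm},\qquad \sup_{n\in\mathbb{I}}\|\theta_n\|_{L^2(0,T)} \le C_T.
$$
I take the ansatz $h(x,s)=\sum_{n\in\mathbb{I}}\theta_n(s)\,H_n(x)$ with $H_n\in\operatorname{span}\{e^{ikx}:k\in I(n)\setminus\{0\}\}$, which is natural since $G$ annihilates constants, so the zero mode contributes nothing. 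Biorthogonality then collapses the moment problem, for each $n\in\mathbb{I}$, into the finite linear system
$$
\widehat{G(H_n)}(k) = d_k,\qquad k\in I(n)\setminus\{0\},
$$
that is, $M_n\beta_n = d_n$, where $H_n=\sum_{k'}\beta_{n,k'}e^{ik'\cdot}$, $(M_n)_{k,k'}=\widehat{G(e^{ik'\cdot})}(k)$, and $\dim M_n = m(n)\le n_0$.

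The crux is uniform invertibility of the $M_n$. For the finitely many clusters (note that $k\in I(n)$ with $m(n)\ge 2$ forces $|k|<k_1^\ast$ by $(H3)$, so only finitely many clustered indices exist), invertibility reduces, via self-adjointness of $G$, to showing that $G\phi=0$ with $\phi$ a trigonometric polynomial forces $\phi$ constant on the open set $\omega$, hence constant on $\mathbb{T}$ by analyticity, hence zero in our zero-mean subspace. For simple eigenvalues ($|n|>k_1^\ast$), $M_n$ is the scalar $\widehat{g}(0)-2\pi|\widehat{g}(n)|^2$, which by \eqref{gcondition}, the strict inequality $|\widehat{g}(n)|<\widehat{g}(0)$ (forced by $g\ge 0$ with open support and $n\ne 0$), and Riemann--Lebesgue decay, is bounded below by a positive constant uniformly in $n$. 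Combining $\|M_n^{-1}\|\le C$ with the Ingham bound on $\|\theta_n\|$, Parseval yields
$$
\|h\|_{L^2([0,T];H^s_p)}^2 = \sum_{k\ne 0}\langle k\rangle^{2s}|\beta_{n(k),k}|^2\,\|\theta_{n(k)}\|_{L^2(0,T)}^2 \lesssim \sum_{k\ne 0}\langle k\rangle^{2s}|d_k|^2 \lesssim \|u_0\|_{H^s_p}^2+\|u_1\|_{H^s_p}^2,
$$
where $n(k)$ is the unique index in $\mathbb{I}$ with $\lambda_{n(k)}=\lambda_k$ and cluster weights $\langle k\rangle^{2s}$ are comparable since clustered indices are uniformly bounded. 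This delivers \eqref{hboun}.

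The step I expect to be the main obstacle is the uniform lower bound on $|\widehat{g}(0)-2\pi|\widehat{g}(n)|^2|$ as $|n|\to\infty$ in the simple-eigenvalue regime: the argument leans crucially on both the normalization \eqref{gcondition}, the nonnegativity of $g$, and Riemann--Lebesgue decay. Absent this uniformity, the Sobolev estimate \eqref{hboun} would fail to close even though each individual moment equation would remain solvable.
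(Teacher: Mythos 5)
Your proposal is correct and follows essentially the same route as the paper's proof: the moment method reduces the problem to the equations \eqref{caract5}, the generalized Ingham inequality (Theorem \ref{InghamG2}) produces a uniformly bounded biorthogonal family for $\{e^{-i\lambda_k t}\}_{k\in\mathbb{I}}$, the control is taken as a sum of these biorthogonal functions times finite trigonometric packets supported on each cluster $I(n)$, and the problem collapses to the finite linear systems governed by the matrices $(m_{j,k})$ whose uniform invertibility (Lemma \ref{invertmatrix}) yields the estimate \eqref{hboun}. The only cosmetic differences are that you keep $u_0$ general instead of reducing to $u_0=0$, and you sketch direct proofs of the invertibility of $M_n$ and the lower bound on $m_{k,k}$ where the paper cites these facts from the literature.
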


\begin{rem}\label{partic1} Note that if $\gamma'$   defined in \eqref{gammalinha}  is infinite ($\gamma'=+\infty$), then \eqref{introduc2} is exactly controllable for any positive time $T.$ In particular, if $(H1)$ holds and
    \begin{itemize}
        \item [(i)]  $a(-k)=a(k),$ for all $k\in \mathbb{I}$;

        \item[(ii)] $\displaystyle{\lim_{|k|\rightarrow +\infty}| (k+1) a(k+1)- k a(k)|=+\infty},$ where $k$ runs over $\mathbb{I},$
    \end{itemize}
    then system \eqref{introduc2} is exactly controllable for any $T>0.$ In fact, from  (i) we infer that $\lambda_{k}=-\lambda_{-k}$ for all $k\in \mathbb{I}.$  On the other hand, property (ii) yields that  $\gamma'=+\infty$ and the real sequence $\{\lambda_{k}\}_{k\in \mathbb{I}}$ is strictly increasing/decreasing for $k\in \mathbb{I}$ with $|k|>k_{1}^{\ast}$ for some $k_{1}^{\ast}$, implying that$(H2)$-$(H3)$ hold. Also,
   since terms of the sequence  $\{\lambda_{k}\}_{k\in \mathbb{I}}$ are distinct, it is clear that \eqref{gammaseg} holds.
\end{rem}

   Property (ii) in Remark \ref{partic1} implies the so called \textquotedblleft asymptotic gap condition" for the eigenvalues $\{i \lambda_{k}\}_{k\in \mathbb{I}}$ associated with the operator $\partial_{x}\mathcal{A}.$ This property is crucial to obtain the exact controllability for any  $T>0.$ It appears that many dispersive models hold the properties (i) and (ii). For instance, the linearized KdV equation  \cite{10,14}, the linearized Benjamin-Ono equation  \cite{1}, and the linearized Benjamin equation  \cite{Manhendra and Francisco}. See Figure \ref{fig1} for an illustrative figure.
 \begin{figure}[h!]
	\begin{center}
		\includegraphics[width=15cm]{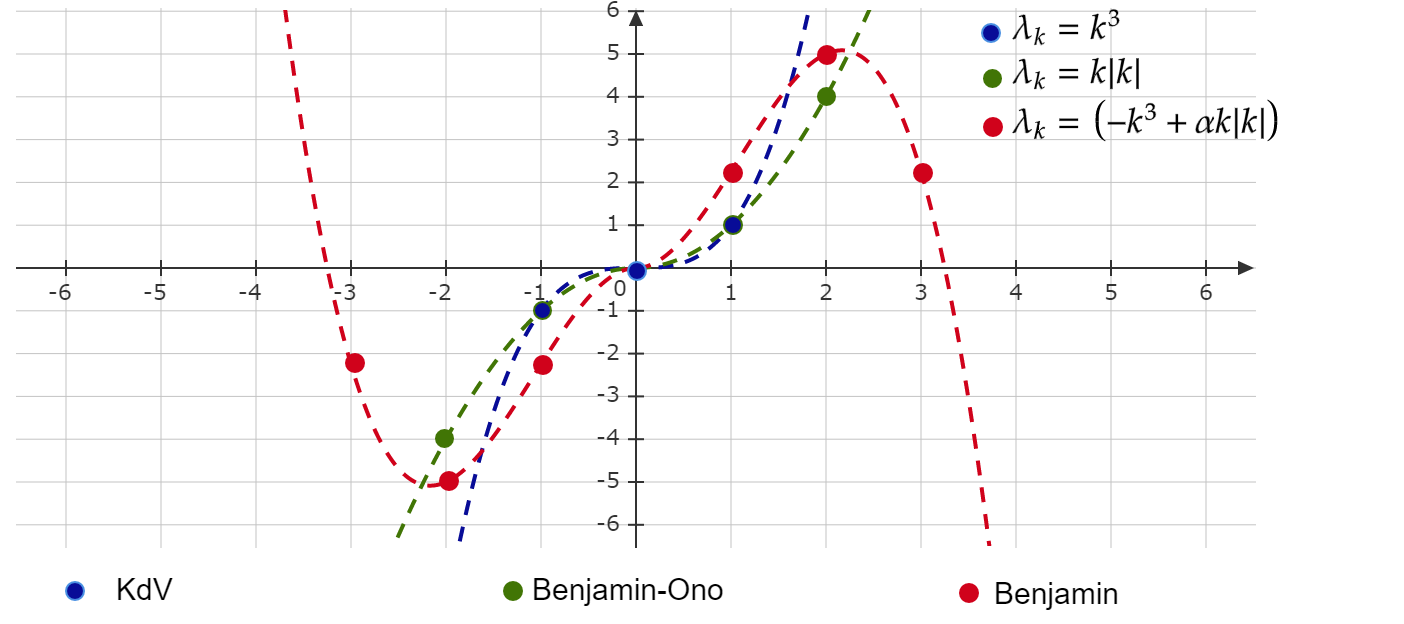}
		\caption{Dispersion of $\lambda_{k}$'s for
		KdV, Benjamin-Ono and Benjamin equations}\label{fig1}
	\end{center}
\end{figure}

Next we shall prove that even when we have an infinity quantity of repeated eigenvalues associated with  $\partial_{x}\mathcal{A}$  in a particular form,  we can still obtain an exact controllability result. This will provide our second criterion. For this, we will assume $(H1)$ and
\vskip.2cm
\begin{itemize}
    \item [$(H4)$] there are $n_{0},k_{1}^{\ast}\in \mathbb{N}$ such that  $m(k_{1})\leq n_{0},$  for all $k_{1}\in \mathbb{Z}$ with $|k_{1}|< k_{1}^{\ast}.$ In addition, $m(k_1)=2$ for all $|k_{1}|\geq k_{1}^{\ast}$.
\end{itemize}
\vskip.2cm
and
\vskip.2cm
\begin{itemize}
    \item [$(H5)$]  $a(-k)=-a(k),$ for all $k\in \mathbb{Z}$ with $|k|\geq k_{1}^{\ast}.$ 
\end{itemize}
\vskip.2cm

Assumption $(H4)$ says that, except near the origin, all eigenvalues are double. Moreover, in view of $(H5)$,  $\lambda_{k}=\lambda_{-k},$ for all $|k|\geq k_{1}^{\ast}.$ This implies that $I(k_1)=\{-k_1,k_1\}$ for $|k_1|\geq k_1^*$.

As before, if we are interested in counting only the distinct eigenvalues we can obtain a set
$$
\mathbb{J}\subset\{-k_1^*+1,-k_1^*+2, \ldots\}
$$
such that the sequence
$\{\lambda_{k}\}_{k \in \mathbb{J}}$ has the property that $\lambda_{k_{1}}\neq \lambda_{k_{2}}$, for any $k_{1},k_{2}\in \mathbb{J}$, with $k_{1}\neq k_{2}.$

Our second result regarding controllability reads as follows.
\begin{thm}[Criterion II]\label{ControlLag}
Let $s\in \mathbb{R}$ and assume $(H1), \;(H4),$ and $(H5).$ Suppose
  \begin{equation}\label{gammasegg}
  \begin{split}
  \tilde{\gamma} &:=\inf_{\substack{k,n\in \mathbb{J}\\k\neq n}}|\lambda_{k}-\lambda_{n}|>0
  \end{split}
  \end{equation}
  and
   \begin{equation}\label{gammalinhag}
  \begin{split}
\tilde{\gamma}'&:=\underset{S\subset \mathbb{J}}{\sup}\;
\underset{k\neq n}{\underset{k,n \in \mathbb{J}\backslash S}{\inf}}
|\lambda_{k}-\lambda_{n}|>0,	
  \end{split}
  \end{equation}
where $S$ runs over the finite subsets of $\mathbb{J}.$
	Then for any $T> \frac{2\pi}{\tilde{\gamma}'}$ and   for each $u_{0},\; u_{1}\in H_{p}^{s}(\mathbb{T})$ with $\widehat{u_{0}}(0)=\widehat{u_{1}}(0),$ there exists a function $h\in L^{2}([0,T];H_{p}^{s}(\mathbb{T}))$
	such that the unique solution $u$ of the non-homogeneous system
	\eqref{introduc2} satisfies $u(T)=u_{1}.$ Moreover, there exists a positive constant $\nu \equiv \nu(s,g,T)$
	such that \eqref{hboun} holds.
\end{thm}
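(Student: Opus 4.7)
The plan is to follow the moment method scheme used for Theorem~\ref{ControlLa}, adapted to accommodate the fact that, under $(H4)$--$(H5)$, each eigenvalue $i\lambda_k$ with $|k|\geq k_1^{\ast}$ is double with eigenspace $\mathrm{span}\{\psi_k,\psi_{-k}\}$. The first step is to reduce the controllability problem to a moment problem: writing the mild solution of \eqref{introduc2} via the unitary group $e^{t\partial_x\mathcal A}$ and pairing $u(T)=u_1$ against each eigenfunction $\psi_k$ yields
\[
\int_0^T e^{i\lambda_k(T-t)}\,\langle G(h(\cdot,t)),\psi_k\rangle\,dt
=\langle u_1,\psi_k\rangle-e^{iT\lambda_k}\langle u_0,\psi_k\rangle=:\beta_k,\qquad k\in\mathbb{Z}.
\]
The identity $\widehat{G(\phi)}(0)=0$ combined with $\widehat{u_0}(0)=\widehat{u_1}(0)$ disposes of $k=0$. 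For $|k|\geq k_1^{\ast}$, hypothesis $(H5)$ gives $\lambda_k=\lambda_{-k}$, so the two equations indexed by $\pm k$ share the \emph{same} exponential $e^{i\lambda_k(T-t)}$.

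I would then invoke the generalized Ingham inequality of \cite[Theorem~4.6]{7}, which applies precisely under the gap hypotheses \eqref{gammasegg}--\eqref{gammalinhag} for every $T>2\pi/\tilde\gamma'$, to produce a biorthogonal sequence $\{\theta_k\}_{k\in\mathbb{J}}\subset L^2(0,T)$ to $\{e^{i\lambda_k(T-t)}\}_{k\in\mathbb{J}}$ with uniform $L^2(0,T)$-bounds. The control is then sought in separated-variables form
\[
h(x,t)=\sum_{k\in\mathbb{J}}\theta_k(t)\,\phi_k(x),\qquad \phi_k\in H_p^s(\mathbb{T}),
\]
where the spatial profiles $\phi_k$ are still to be determined. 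Biorthogonality collapses the infinite moment system to the purely spatial conditions $\langle G\phi_{k^{\flat}},\psi_k\rangle=\beta_k$ for all $k\in\mathbb{Z}$, where $k^{\flat}\in\mathbb{J}$ is the unique representative satisfying $\lambda_{k^{\flat}}=\lambda_k$. The finitely many small indices produce a finite-dimensional system handled as in Theorem~\ref{ControlLa}; for $|k|\geq k_1^{\ast}$ the remaining conditions decouple into independent $2\times 2$ subsystems, one per $k^{\flat}$.

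The main obstacle I anticipate is the uniform invertibility of these $2\times 2$ subsystems. Restricting $\phi_k$ to $\mathrm{span}\{\psi_k,\psi_{-k}\}$, the question reduces to controlling the inverse of the Gram-type matrix
\[
M_k:=\bigl(\langle G\psi_{\epsilon k},\psi_{\delta k}\rangle\bigr)_{\epsilon,\delta\in\{+,-\}}.
\]
A direct computation from \eqref{EQ1} expresses each entry of $M_k$ as an explicit combination of the Fourier coefficients $\widehat g(0),\widehat g(\pm k),\widehat g(\pm 2k)$. Since $g\in C_p^\infty(\mathbb{T})$ has rapidly decaying Fourier coefficients and \eqref{gcondition} forces $\widehat g(0)=(2\pi)^{-1}$, one obtains $M_k=(2\pi)^{-1}I+o(1)$ as $|k|\to\infty$, so $|\det M_k|$ is bounded away from zero for all $|k|$ beyond some threshold. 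Any finite set of exceptional indices at which $\det M_k$ might vanish is absorbed into the finite set $S$ in \eqref{gammalinhag} by enlarging the ansatz with a finite-dimensional correction, in the same spirit as the treatment of small indices under $(H4)$.

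Finally, combining the uniform bounds on $M_k^{-1}$ and on $\|\theta_k\|_{L^2(0,T)}$ with Plancherel's identity in $H_p^s(\mathbb{T})$ yields $\|h\|_{L^2(0,T;H_p^s)}\lesssim\|u_0\|_{H_p^s}+\|u_1\|_{H_p^s}$, which is \eqref{hboun}. Well-posedness of \eqref{introduc2} in the stated regularity class follows from standard semigroup arguments applied to the bounded perturbation $G$, and the construction above produces a control $h$ steering $u_0$ to $u_1$ in time $T$.
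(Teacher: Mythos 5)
Your proposal follows essentially the same route as the paper's proof: reduction to a moment problem via pairing against the eigenfunctions, the generalized Ingham inequality under \eqref{gammasegg}--\eqref{gammalinhag} to produce a uniformly bounded biorthogonal family indexed by $\mathbb{J}$, a separated-variables ansatz whose spatial part for $|k|\geq k_1^{\ast}$ lives in $\mathrm{span}\{\psi_k,\psi_{-k}\}$, and the Riemann--Lebesgue asymptotics $M_k\to(2\pi)^{-1}I$ to obtain a uniform bound on the $2\times 2$ inverses; the estimate \eqref{hboun} then follows exactly as you indicate. The one step that does not work as written is your treatment of possible exceptional indices where $\det M_k$ might vanish: enlarging the finite set $S$ in \eqref{gammalinhag} only improves the Ingham constant for the time exponentials and has no bearing on the solvability of the spatial conditions $\langle G\phi,\psi_{\pm k}\rangle=\beta_{\pm k}$ at a fixed $k$ --- if $\det M_k=0$, that $2\times 2$ system is unsolvable for generic data within $\mathrm{span}\{\psi_k,\psi_{-k}\}$, and no modification of the time profiles repairs it. What closes this point (and what the paper uses) is that these Gram-type matrices of $G$ are in fact always invertible, Lemma \ref{invertmatrix}(iii), quoted from \cite{Micu Ortega Rosier and Zhang}; your asymptotic argument then gives the \emph{uniform} bound for all large $k$, while the finitely many remaining indices are handled by their individual (finite) inverses. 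With that substitution in place of the ``absorb into $S$'' remark, your argument coincides with the paper's.
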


\begin{rem}\label{partic}
    If hypotheses $(H1),$ $(H4)$  and $(H5)$ hold with $$ \displaystyle{\lim_{k\rightarrow +\infty}|(k+1) a(k+1)- k a(k)|=+\infty},$$ where $k$ take values in $\mathbb{J},$
    then the system \eqref{introduc2} is exactly controllable for any $T>0.$  
\end{rem}

It is not difficult to see that the linear  Schr\"odinger equation holds the assumptions in Theorem \ref{ControlLag} and Remark\ref{partic}. See Figure \ref{fig2} for an illustration of the eigenvalues. Actually, the exact controllability and exponential stabilization for the linear (and  nonlinear cubic) Schr\"odinger equations were proved in   \cite{ Rosier Lionel Zhang}, where the authors used $f(x,t)=Gh(x,t):=g(x)h(x,t)$ as a control input. Here we show that the control input as described in \eqref{EQ1} also serves to prove the exact controllability. The advantage of using this control input is that it allow us to get a controllability  and stabilization result for the linear Schr\"odinger equation in the Sobolev space $H^{s}_{p}(\mathbb{T})$ for any $s\in \mathbb{R}$.
\begin{figure}[h!]
	\begin{center}
		\includegraphics[width=15cm]{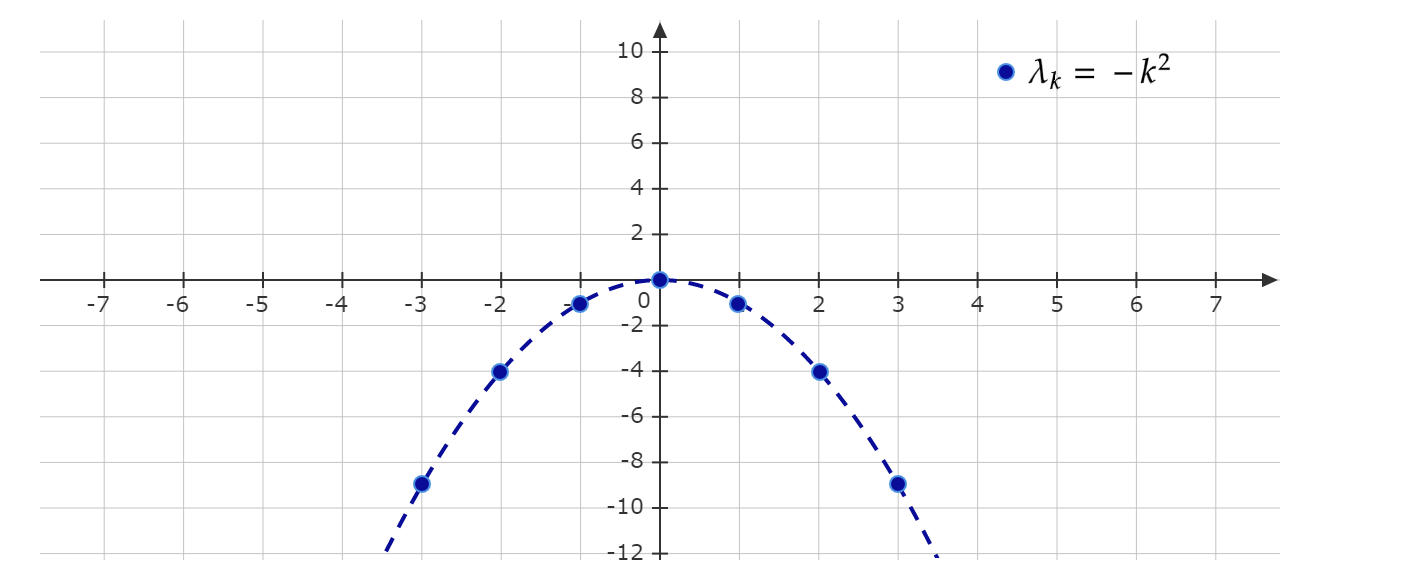}
		\caption{Dispersion of $\lambda_{k}$'s for
			Schr\"odinger equation}\label{fig2}
	\end{center}
\end{figure}

Attention is now turned to our stabilization results. In what follows, $G^\ast$ denotes the adjoint operator of $G$. We will prove that
if one chooses the feedback law $Ku=-GG^{\ast}u$ then the closed-loop system \eqref{2D-BO2} is exponentially stable. More precisely, we have the following.

\begin{thm}\label{st351}
	Let  $g$ be as in \eqref{gcondition}  and let  $s\in \mathbb{R}$ be given. Under the assumptions of Theorem \ref{ControlLa}  or  Theorem \ref{ControlLag},  there exist positive constans $M=M( g, s)$ and $\alpha=\alpha(g)$ such that for any
	$u_{0}\in H_{p}^{s}(\mathbb{T})$  the unique solution $u$
	of the closed-loop system
	\begin{equation*}
\begin{cases}
u\in C([0,+\infty);H^{s}_{p}(\mathbb{T})), \hbox{}\\
   \partial_{t}u(t)= \partial_{x}\mathcal{A}u(t)- GG^{\ast}u(t)\in H_{p}^{s-r}(\mathbb{T}),  t> 0,\\
    u(0)=u_{0}\in H_{p}^{s}(\mathbb{T}), 
\end{cases}
\end{equation*}	
satisfies
$$\|u(\cdot,t)-\widehat{u_{0}}(0)\|_{H_{p}^{s}(\mathbb{T})}\leq M
	e^{-\alpha t}\|u_{0}-\widehat{u_{0}}(0)\|_{H_{p}^{s}(\mathbb{T})},\;\;\;\text{for all}\;\;
	t\geq 0.$$
\end{thm}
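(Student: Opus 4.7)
The plan is the classical HUM/Datko scheme: use the exact controllability of Theorem~\ref{ControlLa} or \ref{ControlLag} to extract an observability inequality, combine it with the built-in dissipation produced by the feedback $-GG^{\ast}$, and iterate to obtain exponential decay. First I would check that the closed-loop operator $A:=\partial_{x}\mathcal{A}-GG^{\ast}$ generates a $C_{0}$-semigroup on $H^{s}_{p}(\mathbb{T})$: the Fourier multiplier $\partial_{x}\mathcal{A}$ is skew-adjoint on $L^{2}_{p}$ and generates a unitary group on every $H^{s}_{p}(\mathbb{T})$, while $GG^{\ast}$ is a bounded perturbation on every $H^{s}_{p}(\mathbb{T})$ by Remark~2.1(1). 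Because $\partial_{x}\mathcal{A}c=0$ and $G(c)=c\bigl(g-g\,\langle 1,g\rangle\bigr)=0$ for each scalar $c$ by \eqref{gcondition}, the constant $\widehat{u_{0}}(0)$ is a stationary solution; writing $u(t)=\widehat{u_{0}}(0)+v(t)$ with $v_{0}:=u_{0}-\widehat{u_{0}}(0)$ reduces matters to proving exponential decay of $v$ in the zero-mean subspace of $H^{s}_{p}(\mathbb{T})$.

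\textbf{Observability and $L^{2}_{p}$ decay.} Exact controllability of \eqref{introduc2} at time $T>2\pi/\gamma'$ (or $T>2\pi/\tilde{\gamma}'$) with controls in $L^{2}(0,T;H^{s}_{p}(\mathbb{T}))$ is, by HUM duality, equivalent to the observability inequality
\begin{equation*}
\|\phi_{0}\|^{2}_{H^{s}_{p}}\le C\int_{0}^{T}\|G^{\ast}\phi(t)\|^{2}_{H^{s}_{p}}\,dt
\end{equation*}
for every mean-zero $\phi_{0}\in H^{s}_{p}(\mathbb{T})$, where $\phi$ solves $\partial_{t}\phi=\partial_{x}\mathcal{A}\phi$ with $\phi(0)=\phi_{0}$. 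On the other hand, since $\partial_{x}\mathcal{A}$ is skew-adjoint on $L^{2}_{p}$, multiplying the closed-loop equation by $v$ and integrating in time yields the dissipation identity
\begin{equation*}
\|v(T)\|^{2}_{L^{2}_{p}}=\|v(0)\|^{2}_{L^{2}_{p}}-2\int_{0}^{T}\|G^{\ast}v(t)\|^{2}_{L^{2}_{p}}\,dt.
\end{equation*}
A compactness-uniqueness argument transfers the observability estimate from the free evolution $\phi$ to the damped trajectory $v$; the unique continuation ingredient follows from the explicit eigenfunction decomposition in $(H1)$ together with the fact that $\widehat{g}(k)\neq 0$ for infinitely many $k$, since $g\in C^{\infty}_{p}(\mathbb{T})$ is not identically zero. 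In the case $s=0$ the two displays combine to give $\|v(T)\|_{L^{2}_{p}}\leq\rho\,\|v(0)\|_{L^{2}_{p}}$ for some $\rho\in(0,1)$, and the semigroup property promotes this to exponential decay with rate $\alpha=-\log\rho/T$.

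\textbf{Extension to $H^{s}_{p}$ and main obstacle.} For $s\neq 0$ the energy identity performed directly in the $H^{s}_{p}$ inner product is polluted by the non-sign-definite commutator $[\Lambda^{s},GG^{\ast}]$, where $\Lambda^{s}:=(I-\partial_{x}^{2})^{s/2}$, so the previous step cannot be repeated verbatim. I would bypass this by conjugation: the profile $w:=\Lambda^{s}v$ satisfies $\partial_{t}w=\partial_{x}\mathcal{A}w-GG^{\ast}w-Kw$ with $K:=[\Lambda^{s},GG^{\ast}]\Lambda^{-s}$, and since $g$ is smooth, $K$ is compact on $L^{2}_{p}$ (smoothing of order one). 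The exponential stability of the $L^{2}_{p}$ semigroup of $A$ established in the previous step is preserved under this compact perturbation by Haraux's persistence principle, provided the unique continuation property holds for $A-K$; that property reduces once more to $\widehat{g}(k)\neq 0$ for infinitely many $k$. Transporting back through $\Lambda^{\pm s}$ yields the bound with $M=M(g,s)$ and rate $\alpha=\alpha(g)$. The main technical obstacle is precisely this conjugation-plus-compactness-uniqueness step at arbitrary $s\in\mathbb{R}$: the commutator bookkeeping and the verification that no mean-zero eigenfunction of $\partial_{x}\mathcal{A}$ is annihilated by $G^{\ast}$ are both routine in spirit but require carefully tracking the spectral structure provided by $(H1)$ (or $(H1),(H4),(H5)$) across fractional Sobolev scales.
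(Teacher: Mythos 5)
Your overall skeleton --- reduce to the mean-zero subspace, extract an observability inequality from exact controllability, combine it with the dissipation produced by $-GG^{\ast}$, and iterate --- is exactly the classical principle the paper invokes: it simply cites the observability inequality (Corollary \ref{controloperator1g}, already established for \emph{every} $s\in\mathbb{R}$) together with the Slemrod/Liu theorem that observability of a conservative pair $(\partial_x\mathcal{A},G)$ implies exponential stability of $\partial_x\mathcal{A}-GG^{\ast}$, taking $K_1=-G^{\ast}$. However, two of your steps have genuine problems. First, your unique continuation ingredient is misidentified: the condition ``$\widehat{g}(k)\neq 0$ for infinitely many $k$'' does not prevent $G^{\ast}$ from annihilating a particular eigenfunction of $\partial_x\mathcal{A}$. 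What is actually needed is that $G\psi_k\neq 0$ for \emph{every} $k\neq 0$, and, on the multi-dimensional eigenspaces allowed by $(H2)$--$(H4)$, that the Gram matrices $M_n$ of Lemma \ref{invertmatrix} are invertible --- e.g.\ for a double eigenvalue one must rule out $G(c_1\psi_k+c_2\psi_{-k})=0$ for some $(c_1,c_2)\neq 0$, which your condition does not address. These facts are precisely what Lemma \ref{invertmatrix} provides and what is already packaged inside the observability inequality, so the compactness--uniqueness detour is both superfluous and, as you state it, unsupported.

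Second, and more seriously, the extension to $s\neq 0$ via conjugation by $\Lambda^{s}$ and ``Haraux's persistence principle'' would not go through as written: exponential stability of a $C_0$-semigroup is \emph{not} in general preserved under a compact perturbation (eigenvalues can cross into the closed right half-plane), so you would owe a separate spectral/unique-continuation analysis for $A-K$ that you again reduce to the insufficient condition above. The obstacle you are trying to circumvent is in fact illusory in this setting: $\partial_x\mathcal{A}$ is a Fourier multiplier with purely imaginary symbol $ika(k)$, hence $U(t)$ is unitary on $H^{s}_{p}(\mathbb{T})$ for every $s$ (the weight $(1+|k|)^{2s}$ in \eqref{innerprod} commutes with the symbol), and $G$ is bounded on every $H^{s}_{p}(\mathbb{T})$. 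Consequently the whole conservative framework --- dissipation identity, observability, and the Slemrod/Liu theorem --- applies verbatim on the Hilbert space $H^{s}_{0}(\mathbb{T})$ with $G^{\ast}$ taken as the adjoint in the relevant duality, which is exactly how the paper obtains the result for all $s\in\mathbb{R}$ in one stroke, with no commutator bookkeeping.
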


The feedback law $Ku=-GG^{\ast}u$ in Theorem \ref{st351} is the simplest one providing  the exponential decay with a fixed exponential rate. However, by changing the feedback law one is able to show that  the resulting closed-loop system actually has an arbitrary exponential decay rate. More precisely,

\begin{thm}\label{estabilization}
	Let $s\in \mathbb{R},$  $\lambda>0,$ and $u_{0}\in H_{p}^{s}(\mathbb{T})$ be given.  Under the assumptions of Theorem \ref{ControlLa} or Theorem \ref{ControlLag}, there exists a bounded linear  operator $K_{\lambda}$  from $H_{p}^{s}(\mathbb{T})$ to $H_{p}^{s}(\mathbb{T})$ such that
	the unique solution $u$ of the closed-loop system
	\begin{equation}\label{estag}
\begin{cases}
u\in C([0,+\infty);H^{s}_{p}(\mathbb{T})), \hbox{}\\
   \partial_{t}u(t)= \partial_{x}\mathcal{A}u(t)+ K_{\lambda}u(t)\in H_{p}^{s-r}(\mathbb{T}),  t> 0,\\
    u(0)=u_{0}\in H_{p}^{s}(\mathbb{T}), 
\end{cases}
\end{equation}		
satisfies
	$$\|u(\cdot,t)-\widehat{u_{0}}(0)\|_{H_{p}^{s}(\mathbb{T})}\leq
	M\;e^{-\lambda\;t}\|u_{0}-\widehat{u_{0}}(0)\|_{H_{p}^{s}(\mathbb{T})},$$
for all $t\geq0,$ and some positive constant $M=M(g,\lambda, s).$
\end{thm}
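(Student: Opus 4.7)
The plan is to lift the fixed-rate stabilization of Theorem \ref{st351} to an arbitrary decay rate $\lambda>0$ by combining a Slemrod-type exponential shift with a Gramian-based feedback, in the spirit of \cite{Rosier Lionel Zhang,1,Manhendra and Francisco}. The starting observation is that $\partial_x\mathcal{A}$ is a Fourier multiplier with purely imaginary symbol $i\lambda_k$, so it generates a $C_0$-group $\{W(t)\}_{t\in\mathbb{R}}$ acting isometrically on every $H^s_p(\mathbb{T})$ and annihilating constants; by the remark after \eqref{EQ1}, $G$ also leaves the mean-zero subspace $H^s_0:=\{\phi\in H^s_p(\mathbb{T}):\widehat\phi(0)=0\}$ invariant. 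I work throughout on $H^s_0$, restoring the zero mode at the end via $u=w+\widehat{u_0}(0)$.

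First I reduce to a boundedness problem. Since $\partial_x\mathcal{A}$ annihilates constants and $K_\lambda$ will be chosen to do so as well, setting $w(t):=u(t)-\widehat{u_0}(0)$ turns \eqref{estag} into $\partial_t w=\partial_x\mathcal{A}w+K_\lambda w$ on $H^s_0$, and the rescaled unknown $v(t):=e^{\lambda t}w(t)$ satisfies $\partial_t v=(\partial_x\mathcal{A}+\lambda I)v+K_\lambda v$. The desired decay of $u-\widehat{u_0}(0)$ is thus equivalent to uniform boundedness of $v$ in $H^s_p(\mathbb{T})$. The shifted generator $\partial_x\mathcal{A}+\lambda I$ has the same eigenfunctions $\psi_k$ and eigenvalues $\lambda+i\lambda_k$; since the gap constants $\gamma,\gamma'$ (respectively $\tilde\gamma,\tilde\gamma'$) depend only on differences $\lambda_k-\lambda_n$, the hypotheses of Theorem \ref{ControlLa} (resp.\ \ref{ControlLag}) still apply, so $(\partial_x\mathcal{A}+\lambda I,G)$ is exactly controllable in any time $T>2\pi/\gamma'$ on $H^s_0$.

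Next I build the feedback via the controllability Gramian of the shifted system. On $H^s_0$ define
\begin{equation*}
L_\lambda\phi:=\int_0^{+\infty}e^{-2\lambda t}\,W(-t)\,GG^{\ast}\,W(-t)^{\ast}\phi\,dt,
\end{equation*}
which converges in $\mathcal{L}(H^s_0)$ because $W(-t)$ is isometric and $G\in\mathcal{L}(H^s_p)$. The operator $L_\lambda$ is self-adjoint and nonnegative, and the exact controllability from the previous step, expressed dually as an observability estimate $c\|\phi\|_{H^s}^2\le\int_0^T\|G^{\ast}W(-t)^{\ast}\phi\|_{H^s}^2\,dt$ combined with the $e^{-2\lambda t}$ weight, forces $L_\lambda$ to be coercive on $H^s_0$, hence boundedly invertible. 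Setting
\begin{equation*}
K_\lambda\phi:=-GG^{\ast}L_\lambda^{-1}\bigl(\phi-\widehat\phi(0)\bigr),\qquad\phi\in H^s_p(\mathbb{T}),
\end{equation*}
gives $K_\lambda\in\mathcal{L}(H^s_p)$ annihilating constants, and a standard Lyapunov argument using the equivalent norm $\langle L_\lambda^{-1}\cdot,\cdot\rangle_{H^s}$ on $H^s_0$ yields $\tfrac{d}{dt}\langle L_\lambda^{-1}v(t),v(t)\rangle_{H^s}\le 0$, producing the uniform bound on $v(t)$ and therefore exponential decay of $u(t)-\widehat{u_0}(0)$ at rate $\lambda$.

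The principal technical obstacle is verifying the observability inequality and the invertibility of $L_\lambda$ throughout the scale $H^s_0$ for arbitrary $s\in\mathbb{R}$, rather than only in $L^2_0$. This is handled by observing that the moment method combined with the generalized Ingham inequality used in Theorems \ref{ControlLa} and \ref{ControlLag} produces estimates at the level of Fourier coefficients $\widehat\phi(k)$; re-weighting these coefficients by $(1+k^2)^{s/2}$ transfers the estimate to any $H^s_p(\mathbb{T})$, using the boundedness of $G$ on $H^s_p$ recorded after \eqref{EQ1}. Once $L_\lambda^{-1}\in\mathcal{L}(H^s_0)$ is established, the remainder is routine semigroup perturbation theory: $K_\lambda$ is a bounded perturbation of $\partial_x\mathcal{A}$, so \eqref{estag} is globally well-posed in $H^s_p(\mathbb{T})$, and the constant $M=M(g,\lambda,s)$ is obtained by tracking constants through the Lyapunov computation.
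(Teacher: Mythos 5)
Your proposal is correct and follows essentially the same route as the paper: reduce to the mean-zero subspace, invoke the observability inequality coming from Theorems \ref{ControlLa}/\ref{ControlLag} (Corollary \ref{controloperator1g}), and take the Slemrod--Liu feedback $K_{\lambda}=-GG^{\ast}L_{\lambda}^{-1}$ built from the $e^{-2\lambda t}$-weighted Gramian. The only differences are cosmetic --- you integrate the Gramian over $[0,+\infty)$ rather than $[0,T]$ and spell out the Lyapunov computation that the paper delegates to the citations of Slemrod and Liu.
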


 The paper is organized  as follows:
In section \ref{preliminares}  a series of preliminary results that will be used throughout this work are recalled.
 In Section \ref{section4}  we prove well-posedness results. The main  results regarding controllability and    stabilization are proved  in Sections \ref{section5} and  \ref{section6}, respectively. In Section \ref{section6g}, we apply our general criteria to establish the corresponding results regarding exact controllability and exponential stabilization for the linearized Smith equation,  the linearized dispersion-generalized Benjamin-Ono equation, the fourth-order Schr\"odinger and a higher-order Schr\"odinger equation.  Finally, in Section \ref{conc-rem} some concluding remarks and future works are presented.

\section{Preliminaries}\label{preliminares}
In this section we introduce some basic notations and recall the main tools to obtain our results. We denote by $\mathscr{P}$ the space $C^\infty_p(\mathbb{T})$  of all $C^\infty$  functions that are $2\pi$-periodic. By $\mathscr{P}'$ (the dual of $\mathscr{P}$) we denote the space of all periodic distributions.  By $L^2_p(\mathbb{T})$ we denote the standard space of the square integrable $2\pi$-periodic functions.
It is well-known that the sequence  $\{ \psi_{k} \}_{k\in \mathbb{Z}}$  given by
\begin{equation}\label{spi}
\psi_{k}(x):=\frac{e^{ikx}}{\sqrt{2 \pi}},\;\; k\in \mathbb{Z},\;\;x\in \mathbb{T}
\end{equation}
is an orthonormal basis  for $L_{p}^{2}(\mathbb{T})$. The Fourier transform of $v\in \mathscr{P}'$ is defined as
\begin{equation}\label{foutrans}
	\widehat{v}(k)=\frac{1}{2 \pi}\langle v,e^{-ikx}\rangle,\;\; \;k \in \mathbb{Z}.
\end{equation}

Next we introduce the periodic Sobolev spaces. For a more detailed description and properties of
these spaces, we refer the reader to \cite{6}. Given $s\in\Real$, the (periodic) Sobolev space of order $s$ is defined as
$$H^{s}_{p}(\mathbb{T})=\left\{ v\in \mathscr{P}' \left| \right. \|v\|_{H^{s}_{p}(\mathbb{T})}^{2}:=2\pi\sum_{k=-\infty}^{\infty}
(1+|k|)^{2s}|\widehat{v}(k)|^{2}<\infty \right\}.$$
We 
consider the space $H^{s}_{p}(\mathbb{T}) $ as a  Hilbert space endowed
with the inner product
\begin{equation}\label{innerprod}
\displaystyle{(h\,, \,v)_{H^{s}_{p}(\mathbb{T})}= 2\pi \sum_{k\in \mathbb{Z}}
(1+|k|)^{2s}\widehat{h}(k)\;\overline{\widehat{v}(k)},}
\end{equation}
For any $s\in \Real$, $(H^{s}_{p}(\mathbb{T}))'$, the topological dual of $H^{s}_{p}(\mathbb{T})$, is isometrically isomorphic to $H^{-s}_{p}(\mathbb{T})$, where the duality is implemented by the pairing
$$
\displaystyle{\langle h, v\rangle_{H^{-s}_{p}(\mathbb{T})\times H_{p}^{s}(\mathbb{T})}= 2\pi \sum_{k\in \mathbb{Z}}
\widehat{h}(k)\;\overline{\widehat{v}(k)}},\;\;\text{for all}\; v \in H^{s}_{p}(\mathbb{T}),\;h\in H^{-s}_{p}(\mathbb{T}).
$$

\begin{rem}
	It is well-known that any distribution $v\in \mathscr{P}'$ may be written as (see, for instance, \cite[page 188]{6})
	\begin{equation}\label{prep}
		v=\sqrt{2\pi}\sum_{k\in\mathbb{Z}}\widehat{v}(k)\psi_{k},
	\end{equation}
where the series converges in the sense of $\mathscr{P}'$. In particular, any $v\in H^{s}_{p}(\mathbb{T})$, $s\in\Real$, can be written in the form \eqref{prep}.
\end{rem}

 We also consider the closed subspace
 $$H^{s}_{0}(\mathbb{T}):=\left\{ v\in H^{s}_{p}(\mathbb{T)}\left|\right. \;\; \widehat{v}(0)=0\right\}.$$
It can be seen that  if $s_{1}, \;s_{2}\in \mathbb{R}$ with $s_{1}\geq s_{2}$ then
$H_{0}^{s_{1}}(\mathbb{T}) \hookrightarrow H_{0}^{s_{2}}(\mathbb{T}),$ where the  embedding is dense. We denote $H_{0}^{0}(\mathbb{T})$ by $L_{0}^{2}(\mathbb{T}).$
 In particular, $L_{0}^{2}(\mathbb{T})$ is a closed subspace of
	$L^{2}_{p}(\mathbb{T})$.

We continue with some characterization of Riesz basis in Hilbert spaces (see \cite{9} for more details). In what follows, $J$ represents a countable set of indices which could be finite or infinite.

\begin{thm}\label{BasRieszTheo}
	Let $\{x_{n}\}_{n\in J}$ be a sequence in a Hilbert space $H.$ Then the following statements are equivalent.
	
	\begin{enumerate}
		\item $\{x_{n}\}_{n\in J}$ is a Riesz basis for $H$ (see \cite[Definition 7.9]{9}).

		\item $\{x_{n}\}_{n\in J}$ is complete in $H$ (see  {\cite[Definition 1.25]{9}}) and there exist constants $A,\;B>0$ such that
		$$\text{for all}\;\;c_{1},...,c_{N}\;\;\text{scalars,}\;\;A\sum_{n=1}^{N}|c_{n}|^{2}\leq \|\sum_{n=1}^{N}c_{n}x_{n}\|^{2}_{H}
		\leq B\sum_{n=1}^{N}|c_{n}|^{2}.$$
		
		\item There is an equivalent inner product $(\cdot, \cdot)$ for $H$ such that  $\{x_{n}\}_{n\in J}$ is an orthonormal basis for $H$ with respect to $(\cdot, \cdot).$
		
		\item $\{x_{n}\}_{n\in J}$ is a complete Bessel sequence (see {\cite[Definition 7.1]{9}}) and possesses a biorthogonal system $\{y_{n}\}_{n\in J}$  (see {\cite[Definition 4.10]{9}}) that is also
		a complete Bessel sequence.
	\end{enumerate}
\end{thm}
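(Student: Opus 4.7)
The plan is to prove the four-way equivalence by establishing the cyclic chain $(1) \Rightarrow (2) \Rightarrow (3) \Rightarrow (4) \Rightarrow (1)$, using the standard definition that a Riesz basis is the image of an orthonormal basis $\{e_n\}_{n\in J}$ of $H$ under a bounded invertible operator $T \in B(H)$. For $(1) \Rightarrow (2)$, I would write $x_n = T e_n$: completeness of $\{x_n\}$ follows from surjectivity of $T$, while for any finite scalar sequence $(c_n)$ Parseval on $\{e_n\}$ combined with the operator bounds yields
\[
\|T^{-1}\|^{-2} \sum_{n=1}^{N} |c_n|^2 \leq \Big\| \sum_{n=1}^{N} c_n x_n \Big\|^2_H \leq \|T\|^2 \sum_{n=1}^{N} |c_n|^2,
\]
so (2) holds with $A = \|T^{-1}\|^{-2}$ and $B = \|T\|^2$.

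For $(2) \Rightarrow (3)$, I would define a new sesquilinear form on the dense subspace $V := \operatorname{span}\{x_n : n\in J\}$ by declaring $(x_n, x_m)_{\ast} := \delta_{nm}$ and extending linearly. The lower bound $A$ in (2) ensures the induced seminorm is positive definite on $V$, the upper bound $B$ ensures it is dominated by $\|\cdot\|_H$, and together they give equivalence of the two norms on $V$; since $V$ is dense in $H$, the inner product extends uniquely to $H$ and the resulting Hilbert norm is equivalent to $\|\cdot\|_H$. By construction $\{x_n\}$ is an orthonormal basis in this new inner product. For $(3) \Rightarrow (4)$, I would let $R\in B(H)$ be the positive, bounded, boundedly invertible operator satisfying $(u,v)_{\ast} = \langle Ru, v\rangle_H$ (via Riesz representation and norm equivalence). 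Then $\{x_n\}$ is complete and Bessel in both inner products (by norm equivalence), and I would check that $y_n := R x_n$ satisfies $\langle y_n, x_m\rangle_H = (x_n, x_m)_{\ast} = \delta_{nm}$, is complete (since $R$ has dense range and $\{x_n\}$ is complete), and is Bessel (since $R$ is bounded).

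For $(4) \Rightarrow (1)$, given complete Bessel systems $\{x_n\}$ and biorthogonal $\{y_n\}$, I would define the analysis and synthesis maps
\[
U : H \to \ell^2(J), \quad Uv := (\langle v, y_n\rangle_H)_{n\in J}, \qquad S : \ell^2(J) \to H, \quad S(c_n) := \sum_{n\in J} c_n x_n,
\]
both bounded by the respective Bessel bounds. Biorthogonality gives $U x_m = (\delta_{nm})_n$ and $S U v = v$ on $\operatorname{span}\{x_n\}$, which extends to $H$ by completeness of $\{x_n\}$, so $SU = I_H$; similarly $US = I_{\ell^2}$ follows from completeness of $\{y_n\}$. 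Thus $S$ is a bounded invertible operator carrying the canonical orthonormal basis of $\ell^2(J)$ to $\{x_n\}$, and composing with any unitary between $\ell^2(J)$ and an orthonormal basis of $H$ yields (1). The main technical obstacle is the step $(4) \Rightarrow (1)$: the two Bessel bounds and the biorthogonality must be combined carefully to secure boundedness below of $S$ and to pass from dense-subspace identities to global ones, so that $S$ is genuinely an isomorphism $\ell^2(J) \to H$ rather than merely bounded. The step $(2) \Rightarrow (3)$ is essentially algebraic once one correctly specifies the dense domain, and the remaining implications are routine bookkeeping.
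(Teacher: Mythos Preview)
Your cyclic argument $(1)\Rightarrow(2)\Rightarrow(3)\Rightarrow(4)\Rightarrow(1)$ is correct and is the standard route to this characterization. One small remark on $(4)\Rightarrow(1)$: the identity $US=I_{\ell^2(J)}$ already holds on finitely supported sequences by biorthogonality alone, and extends to all of $\ell^2(J)$ by density and boundedness of $US$; so the completeness hypothesis on $\{y_n\}$ is not actually invoked in your argument (it then follows \emph{a posteriori} once $S$ is an isomorphism). This does not affect the validity of the implication you need.

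The paper, however, does not prove this theorem at all: it is quoted as a known characterization and the entire proof is a citation to Theorem~7.13 of Heil, \emph{A Basis Theory Primer}. So the difference is simply that you supply a self-contained proof where the paper defers to the literature; your argument is in fact the one found (in essentially this form) in the cited reference.
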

\begin{proof}
	See {\cite[Theorem 7.13]{9}}.
\end{proof}

Finally,  we recall the generalized Ingham's inequality.

\begin{thm}\label{InghamG2}
	Let $\{\lambda_{k}\}_{k\in J}$ be a family of real numbers, satisfying the uniform gap condition
	$$\gamma=	\underset{k\neq n}{\underset{k,n \in J}{\inf}} |\lambda_{k}-\lambda_{n}|>0.$$
	Set
	$$\gamma'=\underset{S\subset J}{\sup}\;
	\underset{k\neq n}{\underset{k,n \in J\backslash S}{\inf}}
	|\lambda_{k}-\lambda_{n}|>0,$$	
	where $S$ runs over all finite subsets of $J.$
	
	If $I$ is a bounded interval of length $|I|> \frac{2\pi}{\gamma'},$ then there exist positive constants $A$ and $B$ such that
	$$A\sum_{k \in J}|c_{k}|^{2}\leq \int_{I}|f(t)|^{2}dt\leq B\sum_{k \in J}|c_{k}|^{2},$$
	for all functions of the form $f(t)=\sum\limits_{k \in J}c_{k}e^{i\lambda_{k}t}$ with  square-summable complex coefficients $c_{k}.$
\end{thm}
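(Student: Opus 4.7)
The plan is to prove the upper (Bessel) bound using only the uniform gap $\gamma$, and the lower (observability) bound through a Haraux-style decomposition of the spectrum into a finite exceptional set and a tail satisfying a strict gap exceeding $2\pi/|I|$.

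For the upper bound I would fix a non-negative even Schwartz bump $\varphi$ with $\varphi \geq \mathbf{1}_I$ on $\mathbb{R}$ and expand
\[
\int_I |f(t)|^2\,dt \leq \int_{\mathbb{R}} |f(t)|^2 \varphi(t)\,dt = \sum_{k,n \in J} c_k \overline{c_n}\, \widehat{\varphi}(\lambda_n - \lambda_k).
\]
The uniform gap $\gamma > 0$ keeps the frequencies locally finite (at most $\lceil 1/\gamma\rceil+1$ of them in any interval of unit length), while the rapid decay of $\widehat{\varphi}$ lets Schur's test convert this double sum into an estimate $\int_I |f|^2 \leq B \sum_k |c_k|^2$ with $B$ depending only on $\gamma$ and $|I|$.

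For the lower bound I would choose $\epsilon > 0$ with $|I|(\gamma' - \epsilon) > 2\pi$ and use the definition of $\gamma'$ to produce a finite set $S \subset J$ such that $|\lambda_k - \lambda_n| \geq \gamma' - \epsilon$ for all distinct $k,n \in J \setminus S$. Writing $f = f_S + f_{S^c}$ in the obvious way, the classical Ingham inequality applied to the tail on $I$ yields $A_1 > 0$ with
\[
\int_I |f_{S^c}(t)|^2\,dt \geq A_1 \sum_{k \in J \setminus S} |c_k|^2.
\]
A compactness/contradiction argument then promotes this to the full lower bound. If no constant worked, there would exist a coefficient sequence $(c^{(m)})$ of unit $\ell^2$-norm with $\|f^{(m)}\|_{L^2(I)} \to 0$; weak $\ell^2$-compactness together with the finiteness of $S$ produces a subsequential limit $c^*$, so that $f_S^{(m)} \to f^*:=\sum_{k \in S} c_k^* e^{i\lambda_k t}$ uniformly on $I$ and $f_{S^c}^{(m)} \to -f^*$ in $L^2(I)$. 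This would exhibit a nonzero function lying simultaneously in the finite-dimensional span of $\{e^{i\lambda_k t}\}_{k \in S}$ and in the closed $L^2(I)$-span of the tail system.

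The main obstacle is precisely this last step: one must show that these two spans intersect trivially. Pointwise linear independence of $\{e^{i\lambda_k t}\}_{k \in J}$ is automatic from analyticity, but lifting it to the $L^2$ closures requires the Riesz basis structure just produced on the tail, together with an analytic-continuation (Paley--Wiener) argument for the finitely many exponentials in $S$. This is the heart of Haraux's original argument, and is where I would expect to spend most of the effort; the Bessel upper bound and the tail Ingham estimate are by comparison routine.
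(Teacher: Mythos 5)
A preliminary remark: the paper does not actually prove this theorem --- its ``proof'' is the single citation to \cite[p.~67]{7}, where the result is obtained by combining the classical Ingham inequality with Haraux's procedure for adjoining finitely many exceptional exponents. So your sketch must be measured against that standard argument rather than against anything written in the paper. Your upper bound is essentially that standard multiplier proof and is fine: majorize $\mathbf{1}_I$ by a smooth bump, expand the double sum, and use the $\gamma$-separation of the frequencies together with the decay of $\widehat{\varphi}$ via Schur's test (one should first argue for finite sums and note that the resulting Bessel bound also gives unconditional $L^2(I)$-convergence of the series, but that is routine).

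The lower bound, however, contains a genuine gap, and you have located it yourself: after the compactness extraction you must show that a nonzero finite combination $f^{*}=\sum_{k\in S}c_k^{*}e^{i\lambda_k t}$ cannot coincide on $I$ with an $\ell^2$-combination $\sum_{k\in J\setminus S}d_k e^{i\lambda_k t}$ of the tail. This is not a technical afterthought to be deferred: it is essentially the injectivity content of the inequality you are proving, and neither the analyticity of $f^{*}$ nor the Riesz-sequence property of the tail delivers it directly --- the tail sum need not extend analytically off $I$, and Paley--Wiener arguments say nothing useful about an $\ell^2$ exponential sum over an unbounded, non-lattice frequency set. The standard way to close the gap is Haraux's averaging operator: for $k_0\in S$ and small $\epsilon>0$, the map $f\mapsto f-\frac{1}{2\epsilon}\int_{-\epsilon}^{\epsilon}e^{-i\lambda_{k_0}\sigma}f(\cdot+\sigma)\,d\sigma$ annihilates the $k_0$-mode and multiplies each remaining coefficient by $1-\gamma_k$, where $\gamma_k$ is a sinc-type factor with $\sup_{k\neq k_0}|\gamma_k|<1$ thanks to the uniform gap $\gamma$, at the price of shrinking the interval by $2\epsilon$. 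Iterating over the finitely many elements of $S$ (there is room to shrink, since one may arrange $|I|>2\pi/(\gamma'-\epsilon)$ with strict inequality) reduces your identity to one involving only the tail, where classical Ingham forces all $d_k=0$ and hence all $c_k^{*}=0$. But once this operator is in hand it proves the lower bound directly, by induction on $|S|$ and with explicit constants, so the compactness--contradiction detour buys nothing. I would either carry out the Haraux induction from the start, or, if you insist on the compactness route, supply the uniqueness step in full --- as written, the decisive step of the proof is missing.
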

\begin{proof}
	See {\cite[page 67]{7}}.
\end{proof}

For further generalizations of the Ingham inequality  (see \cite{8}) we refer the reader to \cite{Ball and Slemrod} and  \cite{7}.

\section{Well-posedness}
\label{section4}
In this section we establish a global well-posedness result for system  \eqref{introduc2}. We start with some results concerning the homogeneous equation. This results are quite standard but for the sake of completeness we bring the main steps.

\begin{prop}\label{OGU}
 Let $r$ be as in \eqref{limite}. For any $u_{0}\in H_{p}^{r}(\mathbb{T})$, the   homogeneous problem
\begin{equation}\label{introduc}
\begin{cases}
u\in C(\mathbb{R};H^{r}_{p}(\mathbb{T}))\cap C^{1}(\mathbb{R},L^{2}_{p}(\mathbb{T})), \\
   \partial_{t}u = \partial_{x}\mathcal{A}u \in L_{p}^{2}(\mathbb{T}),\quad  t\in \mathbb{R},\\
    u(0)=u_{0},  
\end{cases}
\end{equation}
has a unique solution.
\end{prop}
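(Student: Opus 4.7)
The natural plan is to construct the solution explicitly via Fourier series, exploiting that the orthonormal basis $\{\psi_k\}_{k\in\mathbb{Z}}$ diagonalizes $\partial_x\mathcal{A}$. Indeed, because $\mathcal{A}$ is the Fourier multiplier with symbol $a(k)$, a formal computation (valid at the level of $\mathscr{P}'$ using \eqref{prep}) shows that $\partial_x \mathcal{A}\psi_k = i k a(k)\psi_k$, so the eigenvalues are purely imaginary. Setting $\lambda_k = k a(k)$, I would define
\begin{equation*}
u(t):=\sqrt{2\pi}\sum_{k\in\mathbb{Z}}\widehat{u_0}(k)\,e^{i\lambda_k t}\psi_k,\qquad t\in\mathbb{R},
\end{equation*}
and check that this is the desired solution.

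First I would verify that $u(t)\in H^r_p(\mathbb{T})$ with $\|u(t)\|_{H^r_p(\mathbb{T})}=\|u_0\|_{H^r_p(\mathbb{T})}$ for every $t$, using $|e^{i\lambda_k t}|=1$ and Parseval. Continuity $u\in C(\mathbb{R};H^r_p(\mathbb{T}))$ then follows from dominated convergence applied to the series expansion of $\|u(t)-u(t_0)\|_{H^r_p(\mathbb{T})}^{2}$, whose $k$-th term is bounded by $4(1+|k|)^{2r}|\widehat{u_0}(k)|^{2}$ and tends to $0$ as $t\to t_0$. For the $C^1$ statement I would differentiate term by term to obtain, formally,
\begin{equation*}
\partial_t u(t)=\sqrt{2\pi}\sum_{k\in\mathbb{Z}}i\lambda_k\,\widehat{u_0}(k)\,e^{i\lambda_k t}\psi_k=\partial_x\mathcal{A}u(t),
\end{equation*}
and justify this by observing that the growth bound \eqref{limite} gives $|\lambda_k|=|k a(k)|\leq C(1+|k|)^{r}$ for $|k|\geq k_0$, so
$\sum_k |\lambda_k|^{2}|\widehat{u_0}(k)|^{2}\lesssim \|u_0\|_{H^r_p(\mathbb{T})}^{2}<\infty$, which shows $\partial_t u(t)\in L^2_p(\mathbb{T})$ and that the differentiated series converges in $L^2_p(\mathbb{T})$ uniformly on compact time intervals. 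This simultaneously gives that $u$ satisfies the equation pointwise in $t$, and clearly $u(0)=u_0$ from \eqref{prep}.

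For uniqueness, I would take any solution $v$ in the class prescribed by \eqref{introduc} with $v(0)=u_0$ and compute its Fourier coefficients $\widehat{v}(k,t):=\langle v(t),\psi_k\rangle/\sqrt{2\pi}$. Since $v\in C^1(\mathbb{R};L^2_p(\mathbb{T}))$ and pairing with a fixed $\psi_k$ is a bounded linear functional on $L^2_p(\mathbb{T})$, each $\widehat{v}(k,\cdot)$ is $C^1$ and satisfies the scalar ODE $\frac{d}{dt}\widehat{v}(k,t)=i\lambda_k\widehat{v}(k,t)$ with initial datum $\widehat{u_0}(k)$, forcing $\widehat{v}(k,t)=\widehat{u_0}(k)e^{i\lambda_k t}$ for every $k$; by uniqueness of Fourier expansions this yields $v=u$. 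The only point requiring a little care is the bookkeeping needed to pass from termwise differentiation to differentiation of the series in $L^2_p(\mathbb{T})$, but once the bound $|\lambda_k|\leq C(1+|k|)^{r}$ is in hand the estimate is immediate; no deeper obstacle is expected.
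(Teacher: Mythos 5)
Your proof is correct, but it takes a different route from the paper. The paper's proof is abstract: it uses Plancherel's identity to show that $\partial_{x}\mathcal{A}$ with domain $H^{r}_{p}(\mathbb{T})$ is skew-adjoint on $L^{2}_{p}(\mathbb{T})$, invokes Stone's theorem to produce the unitary group $\{U(t)\}_{t\in\mathbb{R}}$, and then cites the standard existence--uniqueness theorem for the abstract Cauchy problem (Theorem 3.2.3 in Cazenave--Haraux). You instead build the solution explicitly as the Fourier series $\sqrt{2\pi}\sum_{k}\widehat{u_0}(k)e^{i\lambda_k t}\psi_k$ and verify the regularity, the equation, and uniqueness by hand; the key quantitative input is the bound $|\lambda_k|=|k\,a(k)|\leq C(1+|k|)^{r}$ coming from \eqref{limite}, which makes the termwise-differentiated series converge in $L^2_p(\mathbb{T})$ and justifies the dominated-convergence arguments. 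Your uniqueness step (projecting any solution in the prescribed class onto each mode and solving the resulting scalar ODE) is sound, since pairing with $\psi_k$ is a bounded functional on $L^2_p(\mathbb{T})$ and $v(t)\in H^r_p(\mathbb{T})$ is part of the solution class. What the paper's route buys is brevity, at the cost of external references; what your route buys is self-containedness and, more importantly, it is exactly the explicit representation \eqref{solf2}--\eqref{semi2} that the paper introduces only afterwards, informally, in order to extend well-posedness to $H^{s}_{p}(\mathbb{T})$ for arbitrary $s\in\mathbb{R}$ (Theorem \ref{EU1}); your argument would deliver that extension directly with no extra work, since nothing in it uses $s=r$ beyond the domination of $|\lambda_k|$ by $(1+|k|)^{r}$.
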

\begin{proof}
First note that from Plancherel's identity, for any $\varphi, \psi \in  D(\partial_{x}\mathcal{A})=H^{r}_{p}(\mathbb{T}) $,
we have
\begin{align*}
( \partial_{x}\mathcal{A}\varphi, \psi)_{L^{2}_{p}(\mathbb{T})}
&= 2\pi \sum_{k=-\infty}^{+\infty}\widehat{\partial_{x}\mathcal{A}\varphi}(k) \overline{\widehat{\psi}(k)}\\
&= 2\pi \sum_{k=-\infty}^{+\infty} i k a(k)\widehat{\varphi}(k) \overline{\widehat{\psi}(k)} \\
&=-2\pi \sum_{k=-\infty}^{+\infty} \widehat{\varphi}(k) \overline{ (-i k) a(k) \widehat{\psi}(k)}\\
&=-(\varphi, \partial_{x}\mathcal{A}\psi)_{L^{2}(\mathbb{T})},
\end{align*}
which implies that $\partial_{x}\mathcal{A}$   is skew-adjoint. Hence, Stone's theorem gives that  $\partial_{x}\mathcal{A}$ generates a strongly continuous unitary group $\{U(t)\}_{t\in \mathbb{R}}$
on $L^{2}_{p}(\mathbb{T}).$ 
Therefore, Theorem 3.2.3 in  \cite{3} yields the desired result.
\end{proof}

Proposition \ref{OGU} provides the well-posedness theory for \eqref{introduc} only for initial data in $H_{p}^{r}(\mathbb{T})$. However, we can still obtain the well-posedness for initial data  in $H_{p}^{s}(\mathbb{T})$ for any $s\in \mathbb{R}.$ To do so, one needs a more accurate description of the unitary group $\{U(t)\}_{t\in \mathbb{R}}$. At least in a formal level, by taking Fourier's transform in the spatial variable, it is not difficult to see that  the solution of \eqref{introduc} may be written as 
\begin{equation}\label{prop2}
\widehat{u}(t)(k)=e^{ik a(k) t}\widehat{u_{0}}(k),\;\; k\in \mathbb{Z},
\end{equation}
or, by taking the inverse Fourier transform,
\begin{equation}\label{solf}
 u(t)=\left(e^{ik a(k) t}\widehat{u_{0}}(k)\right)^{\vee}, \;\;\;t\in \mathbb{R}.
\end{equation}
This means that
\begin{equation}\label{solf2}
 u(x,t)=\sum_{k\in \mathbb{Z}}
e^{ik a(k) t}\widehat{u_{0}}(k)e^{ikx},\;\;\;t\in \mathbb{R},
\end{equation}
must be the unique solution of  \eqref{introduc}.

The above calculation suggests that, in a rigorous way, we may define the family of linear operators
\;$U:\mathbb{R}\rightarrow \mathcal{L}(H^{s}_{p}(\mathbb{T}))$ by
\begin{equation}\label{semi2}
\begin{split}
 t\rightarrow U(t)\varphi:=e^{\partial_{x}\mathcal{A}t}\varphi
 &=(e^{ik a(k) t}\widehat{\varphi}(k))^{\vee},
\end{split}
\end{equation}
in such a way that the solution  of \eqref{introduc} now becomes $u(t)=U(t)u_{0},\;t\in \mathbb{R}.$

From the growth condition \eqref{limite} and classical results on the semigroup theory (see for instance  \cite{3},   \cite{4} or \cite{6} for additional details), we can show that the family of operators $\{U(t)\}_{t\in\mathbb{R}}$ given by \eqref{semi2} indeed
defines a strongly continuous one-parameter unitary  group  on $H^{s}_{p}(\mathbb{T})$, for any $s\in\mathbb{R}$.  Additionally,  if $u(t)=U(t)u_{0}$ with $u_{0}\in H^{s}_{p}(\mathbb{T}),$ then
\begin{equation*}
    \lim_{h\rightarrow 0}\left\|\frac{u(t+h)-u(t)}{h}-\partial_{x}\mathcal{A}u \right\|_{H_{p}^{s-r}(\mathbb{T})}=0,
\end{equation*}
uniformly with respect $t\in\mathbb{R}.$  In particular, the following result holds.

\begin{thm}\label{EU1}
	Let $s\in\mathbb{R}$ and $u_{0}\in H_{p}^{s}(\mathbb{T})$ be given. Then the homogeneous problem
	\begin{equation*}
\begin{cases}
u\in C(\mathbb{R};H^{s}_{p}(\mathbb{T})), \\
   \partial_{t}u = \partial_{x}\mathcal{A}u \in H_{p}^{s-r}(\mathbb{T}), t\in \mathbb{R},\\
    u(0)=u_{0},  
\end{cases}
\end{equation*}
has a unique solution.
\end{thm}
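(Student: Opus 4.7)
The plan is to take $u(t) := U(t)u_{0}$ with $U(t)$ as defined in \eqref{semi2} and verify the three requirements of the statement directly from the Fourier series representation. Since the symbol $a(k)$ is real, $|e^{ika(k)t}|=1$, so termwise comparison of the series defining the norm shows that $U(t)$ is an isometry of $H^{s}_{p}(\mathbb{T})$ for every $t\in\mathbb{R}$, and the initial condition $u(0)=u_{0}$ is immediate. Strong continuity at $t_{0}\in\mathbb{R}$ follows from the identity
$$\|U(t)u_{0}-U(t_{0})u_{0}\|_{H^{s}_{p}(\mathbb{T})}^{2}=2\pi\sum_{k\in\mathbb{Z}}(1+|k|)^{2s}\,|e^{ika(k)t}-e^{ika(k)t_{0}}|^{2}\,|\widehat{u_{0}}(k)|^{2},$$
via dominated convergence: each summand tends to $0$ and is pointwise dominated by the summable sequence $4(1+|k|)^{2s}|\widehat{u_{0}}(k)|^{2}$.

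For the equation $\partial_{t}u=\partial_{x}\mathcal{A}u$ in $H^{s-r}_{p}(\mathbb{T})$, I would compute that the $k$-th Fourier coefficient of $h^{-1}\bigl(U(t+h)u_{0}-U(t)u_{0}\bigr)-\partial_{x}\mathcal{A}U(t)u_{0}$ equals
$$e^{ika(k)t}\,\widehat{u_{0}}(k)\left[\frac{e^{ika(k)h}-1}{h}-ika(k)\right],$$
and apply dominated convergence once again, now in the $H^{s-r}_{p}$-norm. The bracketed factor tends to $0$ as $h\to 0$ for each $k$, and the elementary bound $|e^{i\theta}-1|\leq|\theta|$ yields $\left|\frac{e^{ika(k)h}-1}{h}-ika(k)\right|\leq 2|ka(k)|$. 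Combining this with the growth assumption \eqref{limite}, namely $|ka(k)|^{2}\leq C^{2}(1+|k|)^{2r}$ for $|k|\geq k_{0}$, the summand in the $H^{s-r}_{p}$-norm is dominated by $4C^{2}(1+|k|)^{2s}|\widehat{u_{0}}(k)|^{2}$ (plus contributions from the finitely many low frequencies $|k|<k_{0}$), which is summable; hence the required limit vanishes, uniformly in $t$ since the dominant does not depend on $t$.

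Uniqueness is handled by the same Fourier transformation: if $v$ denotes the difference of two solutions sharing initial data $u_{0}$, each scalar Fourier coefficient satisfies the linear ODE $\partial_{t}\widehat{v}(k,t)=ika(k)\widehat{v}(k,t)$ with $\widehat{v}(k,0)=0$, forcing $\widehat{v}(k,t)\equiv 0$ for every $k\in\mathbb{Z}$ and hence $v\equiv 0$. The only step requiring genuine care is the differentiability estimate above: the growth bound \eqref{limite} is precisely what is needed so that the weight $(1+|k|)^{2(s-r)}$ from the $H^{s-r}_{p}$-norm can absorb the potentially large factor $|ka(k)|^{2}$ and produce a summable dominating sequence. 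Everything else is routine bookkeeping with Plancherel and dominated convergence.
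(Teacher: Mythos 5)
Your proposal is correct and follows essentially the same route as the paper: the authors also define $u(t)=U(t)u_{0}$ via the Fourier multiplier $e^{ika(k)t}$ in \eqref{semi2} and invoke the growth condition \eqref{limite} together with classical semigroup theory to assert the unitary group property, strong continuity, and differentiability into $H^{s-r}_{p}(\mathbb{T})$. You simply carry out explicitly, via Plancherel and dominated convergence, the verifications the paper delegates to the references, and your uniqueness argument by reduction to scalar ODEs for the Fourier coefficients is the standard completion of that sketch.
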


Next, we deal with the well-posedness of the non-homogenous linear problem \eqref{introduc2}.
\begin{lem}\label{EUNH}
Let $0< T<\infty,$ $s\in \mathbb{R},$  $u_{0}\in H_{p}^{s}(\mathbb{T}),$ and $h\in L^{2}([0,T];H_{p}^{s}(\mathbb{T})).$
Then, there exists a unique mild solution
$u\in C([0,T],H_{p}^{s}(\mathbb{T})) $ for the IVP \eqref{introduc2}.
\end{lem}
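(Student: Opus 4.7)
The plan is to establish the result via Duhamel's principle and the semigroup theory developed earlier in the section. By a \emph{mild solution} to \eqref{introduc2} I mean a function $u\in C([0,T];H_p^s(\mathbb{T}))$ satisfying the variation-of-parameters formula
\begin{equation*}
u(t)=U(t)u_{0}+\int_{0}^{t}U(t-\tau)G(h)(\tau)\,d\tau,\qquad t\in[0,T],
\end{equation*}
where $\{U(t)\}_{t\in\mathbb{R}}$ is the strongly continuous unitary group on $H_{p}^{s}(\mathbb{T})$ constructed in \eqref{semi2}. My first step is therefore to verify that the two terms on the right-hand side are well-defined elements of $C([0,T];H_{p}^{s}(\mathbb{T}))$.

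For the first term, continuity in $t$ with values in $H_{p}^{s}(\mathbb{T})$ is exactly Theorem \ref{EU1}. For the Duhamel term, I would first invoke the remark following \eqref{EQ1} (and the references \cite{1,Micu Ortega Rosier and Zhang} cited there) which guarantees that $G$ is a bounded linear operator from $L^{2}([0,T];H_{p}^{s}(\mathbb{T}))$ into itself; in particular $G(h)\in L^{2}([0,T];H_{p}^{s}(\mathbb{T}))$. Combined with the unitarity $\|U(t-\tau)\varphi\|_{H_{p}^{s}(\mathbb{T})}=\|\varphi\|_{H_{p}^{s}(\mathbb{T})}$, the Bochner integral $\int_{0}^{t}U(t-\tau)G(h)(\tau)\,d\tau$ exists as an element of $H_{p}^{s}(\mathbb{T})$ for each $t$, and an application of the Cauchy--Schwarz inequality yields the bound
\begin{equation*}
\Bigl\|\int_{0}^{t}U(t-\tau)G(h)(\tau)\,d\tau\Bigr\|_{H_{p}^{s}(\mathbb{T})}\leq T^{1/2}\|G\|\,\|h\|_{L^{2}([0,T];H_{p}^{s}(\mathbb{T}))}.
\end{equation*}
Continuity in $t$ would follow from a standard density argument: approximate $G(h)$ in $L^{2}([0,T];H_{p}^{s}(\mathbb{T}))$ by smooth compactly supported functions (for which continuity of the Duhamel integral is immediate by strong continuity of $U$), and then transfer the continuity via the uniform bound above.

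For uniqueness, if $u_{1},u_{2}$ are two mild solutions with the same data, then $w:=u_{1}-u_{2}$ satisfies $w(t)=\int_{0}^{t}U(t-\tau)\cdot 0\,d\tau=0$ in $H_{p}^{s}(\mathbb{T})$, so $w\equiv 0$; alternatively one may use a Gronwall-type argument on $\|w(t)\|_{H_{p}^{s}(\mathbb{T})}$. I do not anticipate a genuine obstacle here: the entire argument is a textbook application of semigroup theory (cf.\ \cite{3}). The only point that requires a little care is that when $s<0$ the equation in \eqref{introduc2} must be interpreted in the distributional/$H_{p}^{s-r}$ sense, so one must be explicit that it is the mild formulation which is being solved, and then, by applying $\partial_{t}$ to the Duhamel formula in the sense of $H_{p}^{s-r}(\mathbb{T})$, verify a posteriori that the mild solution satisfies the equation in the required space.
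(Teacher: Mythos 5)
Your proposal is correct and follows essentially the same route as the paper: the paper simply cites Corollary 2.2 and Definition 2.3 of Pazy \cite[page 106]{4} together with the observation that $G(h)\in L^{1}([0,T];H_{p}^{s}(\mathbb{T}))$, and records the same Duhamel formula \eqref{inteeq} that you take as the definition of mild solution. You have merely written out the standard semigroup details (boundedness of $G$, unitarity of $U$, density argument for continuity) that the paper delegates to the textbook reference.
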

\begin{proof} This is a consequence of Corollary 2.2 and  Definition 2.3 in \cite[page 106]{4}, and the fact that
$G(h)\in L^{1}([0,T];H_{p}^{s}(\mathbb{T})).$ Furthermore, the unique (mild) solution of \eqref{introduc2} is given by
\begin{equation}\label{inteeq}
	u(t)=U(t)u_{0}+\int_{0}^{t}U(t-t')Gh(t')dt', \qquad t\in[0,T].
\end{equation}
This completes the proof of the lemma.
\end{proof}

\section{Proof of the Control Results}
\label{section5}
In this section we   use the classical moment method (see \cite{Russell}) to show the criteria I and II regarding exact controllability for \eqref{introduc2}. First of all, by replacing $u_1$ by $u_1-U(T)u_0$ if necessary, we may assume without loss of generality that $u_{0}=0$ (see \cite[page 10]{Manhendra and Francisco}), implying that $\widehat{u_{1}}(0)=\widehat{u_{0}}(0)=0.$ Consequently, if we write  $u_{1}(x)=\sum\limits_{k\in \mathbb{Z}}c_{k}\;\psi_{k}(x)$ with  $\psi_{k}$ as  in \eqref{spi} then $c_0=0$.

Our first result is a characterization to get the exact controllability for \eqref{introduc2}. Its proof is similar to the proof of Lemma 4.1 in \cite{Manhendra and Francisco},  passing to the frequency space when necessary; so we omit the details.

\begin{lem}\label{caractc}
Let $ s\in \mathbb{R}$ and $T>0$ be given. Assume $u_{1}\in H^{s}_{p}(\mathbb{T})$ with $\widehat{u_{1}}(0)=0.$
Then, there exists
$h\in L^{2}([0,T],H^{s}_{p}(\mathbb{T}))$ such that the solution of the IVP \eqref{introduc2} with initial data $u_{0}=0$ satisfies $u(T)=u_{1}$ if and only if
\begin{equation}\label{CEQ}
    \int_{0}^{T}\left\langle Gh(\cdot,t),\varphi(\cdot,t)\right\rangle_{H^{s}_{p}\times(H^{s}_{p})'}dt
  =\left\langle u_{1},\varphi_{0}\right\rangle_{H^{s}_{p}\times(H^{s}_{p})'},
\end{equation}
for any $\varphi_{0}\in (H^{s}_{p}(\mathbb{T}))'$, and $\varphi$
is the solution of the adjoint system
\begin{equation}\label{adsis}
\begin{cases}
\varphi\in C([0,T]:\left(H_{p}^{s}(\mathbb{T})\right)'),\\
    \partial_{t}\varphi=\partial_{x}\mathcal{A}\varphi \in H_{p}^{-s-r}(\mathbb{T}), \quad t>0,\\
    \varphi(T)=\varphi_{0}. 
\end{cases}
\end{equation}
\end{lem}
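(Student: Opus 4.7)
The strategy is the standard transposition argument at the heart of the moment method: represent $u(T)$ via the Duhamel formula \eqref{inteeq}, pair it with an arbitrary dual element $\varphi_0\in (H_p^s(\mathbb{T}))'$, and recognise the resulting time integral as the duality pairing of $Gh$ against the adjoint-system solution. After the reduction to $u_0=0$, one has
$$u(T)=\int_0^T U(T-t')\,Gh(t')\,dt',$$
and the identity $u(T)=u_1$ is equivalent, by reflexivity of the scale $H_p^s(\mathbb{T})$, to
$$\langle u(T),\varphi_0\rangle_{H_p^s\times (H_p^s)'}=\langle u_1,\varphi_0\rangle_{H_p^s\times (H_p^s)'}\quad\text{for every }\varphi_0\in (H_p^s(\mathbb{T}))'.$$
Since $Gh\in L^2([0,T];H_p^s(\mathbb{T}))\subset L^1([0,T];H_p^s(\mathbb{T}))$, the pairing commutes with the Bochner integral, giving $\langle u(T),\varphi_0\rangle=\int_0^T\langle U(T-t')Gh(t'),\varphi_0\rangle\,dt'$.

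The substantive step is to identify the action of the dual group $U(t)^*$ with the adjoint evolution. Here one passes to the frequency side: using \eqref{semi2}, the duality pairing \eqref{innerprod} and the reality of the symbol $a:\mathbb{Z}\to\mathbb{R}$, a direct Fourier computation
$$\langle U(t)h,v\rangle=2\pi\sum_{k\in\mathbb{Z}}e^{ika(k)t}\widehat{h}(k)\overline{\widehat{v}(k)}=2\pi\sum_{k\in\mathbb{Z}}\widehat{h}(k)\overline{e^{-ika(k)t}\widehat{v}(k)}=\langle h,U(-t)v\rangle$$
yields $U(t)^\ast=U(-t)$ in $(H_p^s(\mathbb{T}))'$. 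Consequently $\varphi(t'):=U(t'-T)\varphi_0$ satisfies $\partial_t\varphi=\partial_x\mathcal{A}\varphi$ together with the terminal condition $\varphi(T)=\varphi_0$; by Theorem \ref{EU1} applied on the dual scale (identified with $H_p^{-s}(\mathbb{T})$) this is precisely the unique solution of the adjoint problem \eqref{adsis}. Substituting this identification gives
$$\langle U(T-t')Gh(t'),\varphi_0\rangle=\langle Gh(t'),\varphi(t')\rangle,$$
and integrating in $t'$ transforms the pairing identity into exactly \eqref{CEQ}. Reading the chain of equalities left-to-right furnishes the ``only if'' direction, while reading it right-to-left combined with the fact that $(H_p^s)'$ separates points of $H_p^s$ furnishes the ``if'' direction.

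The only delicate point, and the one I would expect to be the main bookkeeping obstacle, is the sesquilinear nature of the pairing \eqref{innerprod}: in principle the complex conjugation in the dual could flip the sign of the dispersion under the adjoint, producing a backward equation. The reality of $a(k)$ is what prevents this and is precisely why the adjoint problem \eqref{adsis} is a \emph{forward} evolution with \emph{terminal} data rather than a backward one. Once this sign is tracked carefully on the Fourier side, the remainder of the argument is structurally identical to the proof of \cite[Lemma 4.1]{Manhendra and Francisco}, which is why the authors omit the details.
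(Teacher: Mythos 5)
Your argument is correct and is essentially the proof the paper intends: the authors omit the details and defer to Lemma 4.1 of \cite{Manhendra and Francisco} ``passing to the frequency space when necessary'', and your transposition argument --- Duhamel plus the Fourier-side identification $U(t)^{\ast}=U(-t)$, valid precisely because the symbol $a(k)$ is real, so that $\varphi(t)=U(t-T)\varphi_{0}$ solves the forward equation with terminal datum $\varphi_{0}$ --- is exactly that argument. The only cosmetic point is that the equivalence of $u(T)=u_{1}$ with $\langle u(T)-u_{1},\varphi_{0}\rangle=0$ for all $\varphi_{0}$ requires only that $(H^{s}_{p}(\mathbb{T}))'$ separates points of $H^{s}_{p}(\mathbb{T})$ (Hahn--Banach), not reflexivity.
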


Next corollary is a consequence of Lemma \ref{caractc}. Having in mind its importance, we write the proof.

 \begin{cor} \label{controloperator1}
  Let $s\in \mathbb{R},$ $T> 0,$ and $u_{1}\in H^{s}_{p}(\mathbb{T})$ with $\widehat{u_{1}}(0)=0$ be given. Then, there exists $h\in L^{2}([0,T];H_{p}^{s}(\mathbb{T})),$ such that the unique solution of the IVP \eqref{introduc2} with initial data $u_{0}=0$ satisfies $u(T)=u_{1}$ if and only if there exists $\delta>0$ such that
  \begin{equation}\label{ob1}
   \int_{0}^{T}\|G^{\ast}U(\tau)^{\ast}\phi^{\ast}\|
    ^{2}_{(H_{p}^{s}(\mathbb{T}))'}(\tau) \; d\tau \geq \delta^{2}
    \|\phi^{\ast}\|^{2}_{(H^{s}_{p}(\mathbb{T}))'},
  \end{equation}
for any $\phi^{\ast} \in (H^{s}_{p}(\mathbb{T}))'.$
 \end{cor}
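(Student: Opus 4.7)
The plan is to use Lemma \ref{caractc} as the starting point and then invoke the classical Hilbert-space duality between controllability and observability.

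First, I would apply Lemma \ref{caractc}, which reduces the existence of a steering control $h$ to the identity \eqref{CEQ}. Since $\partial_{x}\mathcal{A}$ generates the unitary group $\{U(t)\}_{t\in\mathbb{R}}$ on every $H_{p}^{s}(\mathbb{T})$ (Theorem \ref{EU1}), and since the equation in \eqref{adsis} is the same as the homogeneous one, the adjoint state is given explicitly by $\varphi(t)=U(t-T)\varphi_{0}=U(T-t)^{\ast}\varphi_{0}$, using the unitarity $U(-s)=U(s)^{\ast}$ on the dual space. Substituting this expression into \eqref{CEQ} and transferring $G$ to its adjoint through the duality pairing, \eqref{CEQ} rewrites as
\begin{equation*}
\int_{0}^{T}\bigl\langle h(t),\,G^{\ast}U(T-t)^{\ast}\varphi_{0}\bigr\rangle_{H_{p}^{s}\times (H_{p}^{s})'}\,dt=\langle u_{1},\varphi_{0}\rangle_{H_{p}^{s}\times (H_{p}^{s})'}.
\end{equation*}

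Next, I would introduce the control operator $L_{T}\colon L^{2}([0,T];H_{p}^{s}(\mathbb{T}))\to H_{p}^{s}(\mathbb{T})$ defined, in view of \eqref{inteeq} with $u_{0}=0$, by $L_{T}h:=\int_{0}^{T}U(T-t)Gh(t)\,dt$, so that the existence of a steering control for a given $u_{1}$ is exactly the assertion $u_{1}\in \operatorname{Range}(L_{T})$. A direct computation from the previous display shows that its adjoint is $(L_{T}^{\ast}\phi^{\ast})(t)=G^{\ast}U(T-t)^{\ast}\phi^{\ast}$, and after the change of variable $\tau=T-t$,
\begin{equation*}
\|L_{T}^{\ast}\phi^{\ast}\|_{L^{2}([0,T];(H_{p}^{s})')}^{2}=\int_{0}^{T}\|G^{\ast}U(\tau)^{\ast}\phi^{\ast}\|_{(H_{p}^{s})'}^{2}\,d\tau.
\end{equation*}
Hence \eqref{ob1} is exactly the statement that $L_{T}^{\ast}$ is bounded below.

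To close the loop I would invoke the classical duality result in Hilbert spaces: $L_{T}$ is surjective onto the admissible target space if and only if $L_{T}^{\ast}$ is bounded below. For the implication from observability to controllability, one takes $h:=L_{T}^{\ast}(L_{T}L_{T}^{\ast})^{-1}u_{1}$, where $L_{T}L_{T}^{\ast}$ is coercive thanks to \eqref{ob1} and therefore invertible; then $L_{T}h=u_{1}$. For the converse, surjectivity of $L_{T}$ combined with the open mapping theorem yields a bounded right inverse whose dualization reproduces \eqref{ob1}.

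The only delicate point in the bookkeeping is the mean-zero constraint: by the computation recalled in the remark following \eqref{EQ1}, $Gh$ has vanishing zero Fourier mode, so $\operatorname{Range}(L_{T})$ lies in the closed subspace $\{v\in H_{p}^{s}(\mathbb{T}):\widehat{v}(0)=0\}$. Consequently the correct admissible targets are exactly those $u_{1}$ with $\widehat{u_{1}}(0)=0$ (as assumed), and the observability in \eqref{ob1} is effective on the corresponding subspace of $(H_{p}^{s}(\mathbb{T}))'$, since constants lie in the kernel of $G^{\ast}$. Keeping track of this identification between $(H_{p}^{s})'$, $H_{p}^{-s}$, and the zero-mean quotient is the one technicality to handle carefully; otherwise the proof reduces to the duality argument sketched above.
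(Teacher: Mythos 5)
Your proposal is correct and follows essentially the same route as the paper: both identify the reachability operator $F_T h=\int_0^T U(T-t)Gh(t)\,dt$, compute its adjoint as $G^{\ast}U(T-t)^{\ast}$ via Lemma \ref{caractc}, and invoke the standard Hilbert-space equivalence between surjectivity of $F_T$ and the lower bound on $F_T^{\ast}$ (the paper cites Rudin's Theorem 4.13 where you use the Gramian $L_TL_T^{\ast}$ and the open mapping theorem). Your explicit remark that the statement must be read on the mean-zero subspace, since constants lie in the kernel of $G^{\ast}$, is a point the paper leaves implicit and is worth keeping.
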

\begin{proof}$(\Rightarrow)$
	Let $T>0$ and define the linear map $F_{T}:L^{2}([0,T]; H^{s}_{p}(\mathbb{T}))\rightarrow H^{s}_{p}(\mathbb{T})$ by $F_{T}(h)=u(T)$,
	where $u$ is the (mild) solution of \eqref{introduc2} with $u(0)=0.$ From the hypothesis, the map $F_{T}$ is onto and, given $u_1\in H^{s}_{p}(\mathbb{T})$,
	\begin{equation}\label{cont1}
F_T(h)=	u_{1}=\int_{0}^{T}U(T-s)(G(h))(s)\;ds,
	\end{equation}
for some $h\in L^{2}([0,T]; H^{s}_{p}(\mathbb{T}))$. Therefore, 
$$
{\footnotesize
\begin{split}
	\|F_{T}(h)\|_{H^{s}_{p}(\mathbb{T})}
	&\leq \int\limits_{0}^{T}\left\|U(T-s)(G(h))(s)
	\right\|_{H^{s}_{p}(\mathbb{T})}\;ds
	\leq c\int\limits_{0}^{T}\|h\|_{H^{s}_{p}(\mathbb{T})}\;ds
	\leq cT^{\frac{1}{2}} \|h\|_{L^{2}([0,T];H^{s}_{p}(\mathbb{T}))},
\end{split}}
$$
for some constant $c$ depending on $g$. So, $F_{T}$ is a bounded linear operator. Thus, $F_{T}^{\ast}$ exists, is a bounded linear operator, and it is one-to-one (see  Rudin  \cite[Corollary b) page 99]{Rudin}).
	Also, from Theorem 4.13  in \cite{Rudin} (see also  \cite[page 35]{Coron}), we have that there exists
	$\delta >0$ such that
	\begin{equation}\label{caractc12}
	\left\|F_{T}^{\ast}(\phi^{\ast})
	\right\|_{\left(L^{2}([0,T];H^{s}_{p}(\mathbb{T}))\right)'}
	\geq \delta \;\|\phi^{\ast}\|_{\left(H^{s}_{p}(\mathbb{T})\right)'},
	\;\;\;\text{ for all}\;\;
	\phi^{\ast}\in \left(H^{s}_{p}(\mathbb{T})\right)'.
	\end{equation}

	From Lemma \ref{caractc},  we have that the solution $u$ of  \eqref{introduc2} with $u_{0}=0$ satisfies
	\begin{equation}\label{caractc6}
	\int_{0}^{T}\left\langle Gh(\cdot,t),\varphi(\cdot,t)\right\rangle_{H^{s}_{p} \times(H^{s}_{p})'}dt
	-\left\langle u_{1},\varphi_{0}\right\rangle_{H^{s}_{p}\times(H^{s}_{p})'}=0,
	\end{equation}
	for any $\varphi_{0}\in (H^{s}_{p}(\mathbb{T}))',$ and $\varphi$
	the solution of  the adjoint system \eqref{adsis}.
By noting that $\varphi(\cdot,t)=U(T-t)^{\ast}\varphi_{0}$,  it follows from
	\eqref{caractc6} that
$${\small
	\begin{split}
	\int\limits_{0}^{T}\left\langle h(\cdot,t),G^{\ast}U(T-t)^{\ast}\varphi_{0}
	\right\rangle_{H^{s}_{p}(\mathbb{T})\times
		(H^{s}_{p}(\mathbb{T}))'}dt
	&=\left\langle u(T),\varphi_{0}\right
	\rangle_{H^{s}_{p}(\mathbb{T})\times
		(H^{s}_{p}(\mathbb{T}))'}\\
	&=\left\langle F_{T}(h),\varphi_{0}\right
	\rangle_{H^{s}_{p}(\mathbb{T})\times
		(H^{s}_{p}(\mathbb{T}))'}\\
	&=\left\langle h\;,\;F_{T}^{\ast}\varphi_{0}\right\rangle
	_{L^{2}([0,T];H^{s}_{p}(\mathbb{T})) \times \left(L^{2}([0,T];H_{p}^{s}(\mathbb{T}))\right)'}.
	\end{split}}
$$
Identifying $L^{2}([0,T];H_{p}^{s}(\mathbb{T}))$ with its dual  one infers $F_{T}^{\ast}=G^{\ast}U(T-t)^{\ast},$ and using \eqref{caractc12},
	we have
$$\left\|G^{\ast}U(T-t)^{\ast}(\phi^{\ast})
	\right\|_{L^{2}([0,T];(H^{s}_{p}(\mathbb{T}))')}
	\geq \delta \;\|\phi^{\ast}\|_{\left(H^{s}_{p}(\mathbb{T})\right)'},
	\;\;\;\text{ for all}\;\;
	\phi^{\ast}\in \left(H^{s}_{p}(\mathbb{T})\right)',$$
or, equivalently,
$$\int_{0}^{T}\|G^{\ast}U(T-t)^{\ast}(\phi^{\ast}(x))
	\|^{2}_{(H^{s}_{p}(\mathbb{T}))'}\;dt
	\geq \delta^{2}\; \|\phi^{\ast}\|^{2}_{(H_{p}^{s}(\mathbb{T}))'},\;\;\;\text{ for all}\;\;
	\phi^{\ast}\in (H^{s}_{p}(\mathbb{T}))'.$$
The change of variables $ \tau=T-t$ yields \eqref{ob1}.

$(\Leftarrow)$ If \eqref{ob1} holds, then
$F_{T}^{\ast}=G^{\ast}U(T-t)^{\ast}$ is onto. It is easy to prove that $F_{T}^{\ast}$ is bounded from $(H_{p}^{s}(\mathbb{T}))'$ into $(L^{2}([0,T];H_{p}^{s}(\mathbb{T})))'.$ Therefore, $F_{T}$ is onto. From computations similar to those above we obtain that \eqref{caractc6} holds. Then Lemma \ref{caractc} implies the result and we conclude the proof of the corollary.
\end{proof}

The following characterization is fundamental to prove the existence of control for \eqref{introduc2} with  initial data $u_{0}=0.$ It provides a method to find the control function $h$ explicitly.

\begin{lem}[Moment Equation]\label{coef}
Let  $ s\in \mathbb{R}$ and $T>0$  be given.
If $$u_{1}(x)=\sum_{l\in \mathbb{Z}}
c_{l}\psi_{l}(x)\;\;\in H^{s}_{p}(\mathbb{T}),$$ is a function such that $\widehat{u_{1}}(0)=0,$
then  the solution $u$ of \eqref{introduc2} with  initial data $u_{0}=0$
satisfies $u(T)=u_{1}$
if an only if there exists $h\in L^{2}([0,T];H^{s}_{p}(\mathbb{T}))$ and
\begin{equation}\label{caract5}
\int_{0}^{T}\left(Gh(x,t),\;
\;e^{-i \lambda_{k} (T-t)} \psi_{k}(x)\right)_{L^{2}_{p}(\mathbb{T})}dt=c_{k},\;\forall\;k\in \mathbb{Z},
\end{equation}
where $\lambda_{k}:=k a(k).$

\end{lem}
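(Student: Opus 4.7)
The plan is to reduce Lemma \ref{coef} to Lemma \ref{caractc}, exploiting the fact that the exponentials $\{\psi_k\}_{k\in\mathbb{Z}}$ form a total system in $(H^s_p(\mathbb{T}))'=H^{-s}_p(\mathbb{T})$ and that the adjoint system \eqref{adsis} can be solved explicitly mode by mode. Since Lemma \ref{caractc} asserts that the reachability of $u_1$ is equivalent to identity \eqref{CEQ} holding for \emph{every} $\varphi_0\in(H^s_p(\mathbb{T}))'$, the moment equation \eqref{caract5} will be precisely this identity evaluated at the test functions $\varphi_0=\psi_k$.

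For the forward direction I would specialize $\varphi_0=\psi_k$ for a fixed $k\in\mathbb{Z}$ and use the semigroup formula \eqref{semi2} applied backward from $t=T$ to obtain
\[
\varphi(x,t)=U(t-T)\psi_k(x)=e^{-i\lambda_k(T-t)}\psi_k(x),
\]
where $\lambda_k=ka(k)$. Because $\psi_k\in L^2_p(\mathbb{T})$ and $Gh(\cdot,t)\in H^s_p(\mathbb{T})$, the duality bracket $\langle Gh,\varphi\rangle_{H^s_p\times H^{-s}_p}$ collapses to the ordinary $L^2$-inner product $(Gh,\varphi)_{L^2_p}$, which is exactly the sesquilinear extension of the pairing prescribed in Section \ref{preliminares}. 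Similarly, using \eqref{prep} one identifies the series coefficient $c_k$ with $\sqrt{2\pi}\,\widehat{u_1}(k)$, and a direct computation then gives $\langle u_1,\psi_k\rangle=c_k$. Plugging these two identifications into \eqref{CEQ} produces \eqref{caract5}. For the converse, both sides of \eqref{CEQ} are continuous sesquilinear functionals of $\varphi_0\in(H^s_p(\mathbb{T}))'$, with the map $\varphi_0\mapsto\varphi(\cdot,t)$ continuous by Theorem \ref{EU1}; since the span of $\{\psi_k\}_{k\in\mathbb{Z}}$ is dense in $H^{-s}_p(\mathbb{T})$, the collection of identities \eqref{caract5} extends by linearity and continuity to \eqref{CEQ} for every $\varphi_0$, at which point Lemma \ref{caractc} closes the argument.

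The main obstacle is not analytic but a bookkeeping subtlety: one must check that the Sobolev duality pairing appearing in Lemma \ref{caractc} is genuinely compatible with the $L^2$-inner product form used in \eqref{caract5}, and that the passage from the coefficients $c_l$ to $\widehat{u_1}(l)$ via \eqref{prep} is carried out with the correct normalization $c_l=\sqrt{2\pi}\,\widehat{u_1}(l)$. Once these identifications are settled and the explicit mode-by-mode solution of \eqref{adsis} is in hand, the argument is essentially a substitution in one direction and a standard density argument in the other.
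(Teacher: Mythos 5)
Your proposal is correct and follows essentially the same route as the paper: specialize Lemma \ref{caractc} to $\varphi_0=\psi_k$ using the explicit mode-by-mode solution $\varphi(x,t)=e^{-i\lambda_k(T-t)}\psi_k(x)$ of the adjoint system for the forward direction, and recover \eqref{CEQ} from \eqref{caract5} by expanding a general $\varphi_0$ in the basis $\{\psi_k\}$ and using linearity plus density for the converse. The normalization $c_l=\sqrt{2\pi}\,\widehat{u_1}(l)$ you flag is exactly the bookkeeping the paper carries out, so there is no gap.
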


\begin{proof}
$(\Rightarrow)$ By taking  $\varphi_{0}=\psi_{k}\in (H^{s}_{p}(\mathbb{T}))'$ in \eqref{adsis},  identity \eqref{solf} implies that
\begin{equation*}
\begin{split}
\varphi(x,t)
=\left(e^{-i\lambda_l (T-t)}\widehat{\psi_k}(l)\right)^{\vee}
=\sum_{l\in \mathbb{Z}}
e^{-i\lambda_{l}(T-t)}\widehat{\psi_k}(l)e^{ilx}=e^{-i\lambda_{k}(T-t)}\psi_{k}(x),
\end{split}
\end{equation*}
where in the last identity we used that $\widehat{\psi_{k}}(l)=\frac{1}{\sqrt{2\pi}}\delta_{kl}$, with $\delta_{kl}$ being the Kronecker delta.
Now, using \eqref{CEQ} one gets
\begin{align*}
 \int_{0}^{T}\left(Gh(x,t),\;\varphi(x,t)\right)_{L^{2}_{p}(\mathbb{T})}dt&
 -
 \left(\sum_{l\in \mathbb{Z}}
c_{l}\psi_{l}(x),\;\varphi_{0}(x)\right)_{L^{2}_{p}(\mathbb{T})}=0.
\end{align*}
Therefore, for any $k\in \mathbb{Z}$,
\begin{align*}
 \int_{0}^{T}\left(Gh(x,t),\;e^{-i\lambda_{k}(T-t)}\psi_{k}(x)\right)_{L^{2}_{p}(\mathbb{T})}dt&=
 2\pi  \sum_{j\in \mathbb{Z}} \left(\sum_{l\in \mathbb{Z}}
c_{l}\psi_{l}(x)\right)^{\wedge}(j)\;\overline{\widehat{\psi_{k}}(j)} \\
&= 2\pi \sum_{l\in \mathbb{Z}}
c_{l}
\widehat{\psi_{l}}(k)\;\frac{1}{\sqrt{2\pi}}\\
 &=c_{k},
\end{align*}
as required.\\

\noindent
$(\Leftarrow)$
Now, suppose that there exists $h\in L^{2}([0,T];H^{s}_{p}(\mathbb{T}))$ such that
\eqref{caract5} holds. With similar calculations as above, we  obtain
\begin{equation}\label{inte4}
\int_{0}^{T}\left(Gh(x,t),\;
\;e^{-i\lambda_{k}(T-t)}\;\psi_{k}(x)\right)_{L^{2}_{p}(\mathbb{T})}dt-\left( u_{1}(x),\;\psi_{k}(x)\right)_{L^{2}_{p}(\mathbb{T})}=0,\;
\;k\in \mathbb{Z}.
\end{equation}

 For any  $\varphi_{0}\in C_{p}^{\infty}(\mathbb{T})$ we may write
 $$\varphi_{0}(x)=\sum_{k\in \mathbb{Z}}\sqrt{2\pi}\widehat{\varphi_{0}}(k)\;\psi_{k}(x),$$
 where the series converges uniformly. Thus, using the properties of the inner product and \eqref{inte4}, we get
\begin{equation}\label{inte5}
\int_{0}^{T}\left(Gh(x,t),\;
\varphi(x,t)\right)_{L^{2}_{p}(\mathbb{T})}dt=
\left( u_{1}(x),\;\varphi_{0}(x)\right)_{L^{2}_{p}(\mathbb{T})},
\end{equation}
where we used that the solution of \eqref{adsis} may be expressed as
\begin{align*}
\varphi(x,t)&=\sum_{k\in \mathbb{Z}}
e^{-i\lambda_{k}(T-t)}\widehat{\varphi_{0}}(k)e^{ikx}
\end{align*}
with the series converging uniformly. By density, \eqref{inte5} holds for any $\varphi_{0}\in (H^{s}_{p}(\mathbb{T}))'$. An application of Lemma \ref{caractc} then gives the desired result.
\end{proof}

\begin{lem}\label{invertmatrix}
For $ \psi_{k} $ as in \eqref{spi} and $G$ as in \eqref{EQ1}, define
\begin{equation}\label{invertmatrix1}
m_{j,k}:=\widehat{G(e^{ijx})}(k)=\int_{0}^{2\pi}G(\psi_{j})(x) \overline{\psi_{k}}(x)\;dx,\;\;\;\;j,k\in\mathbb{Z}.
\end{equation}
Given any finite sequence of nonzero integers $k_{j}$, $j=1,2,3,....,n,$ let $M_n$ be the $n\times n$ matrix,
$$M_{n}:=
   \begin{pmatrix}
     m_{k_{1},k_{1}} & \cdots & m_{k_{1},k_{n}}  \\
     m_{k_{2},k_{1}} & \cdots & m_{k_{2},k_{n}} \\
     \vdots & \vdots & \vdots \\
     m_{k_{n},k_{1}} & \cdots & m_{k_{n},k_{n}} \\
   \end{pmatrix}.
 $$
Then
\begin{itemize}
   \item [(i)]  there exists a constant $\beta>0,$ depending only on $g$, such that
   $$m_{k,k}\geq \beta,\;\;\;\text{ for any}\; k\in\mathbb{Z}-\{0\}.$$
   \item[(ii)] $m_{j,0}=0$, $j\in\mathbb{Z}$.

   \item [(iii)]$M_{n}$ is invertible and hermitian.

   \item [(iv)] there exists $\delta>0,$ depending only on $g$, such that
\begin{equation}\label{posit1}
    \delta_{k}=\|G(\psi_{k})\|^{2}_{L^{2}(\mathbb{T})}> \delta >0,\;\;\text{for all}\;k\in \mathbb{Z}-\{0\}.
\end{equation}
\item [(v)] $m_{-k,k}=\overline{m_{k,-k}}$ and $m_{-k,-k}=\overline{m_{k,k}}.$
 \end{itemize}

\end{lem}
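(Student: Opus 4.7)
I would start by computing a single closed-form expression for every entry,
\[
m_{j,k} = \widehat{g}(k-j) - 2\pi\,\widehat{g}(-j)\,\widehat{g}(k),
\]
obtained by expanding $G(e^{ijx}) = g(x)e^{ijx} - 2\pi\widehat{g}(-j)\,g(x)$ (using $\langle e^{ijx},g\rangle = 2\pi\widehat{g}(-j)$) and taking its $k$-th Fourier coefficient via $\widehat{g\,e^{ijx}}(k) = \widehat{g}(k-j)$. From this formula, (ii) follows at once by setting $k=0$ and invoking the normalization $2\pi\widehat{g}(0)=1$, while (v) follows from $\overline{\widehat{g}(l)} = \widehat{g}(-l)$, valid because $g$ is real.

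For (i), the formula specializes to $m_{k,k} = \tfrac{1}{2\pi} - 2\pi|\widehat{g}(k)|^2$. Pointwise positivity for $k\neq 0$ is immediate from the strict inequality $|\widehat{g}(k)| < \widehat{g}(0) = \tfrac{1}{2\pi}$, which holds because $g\geq 0$ is not constant on $\mathbb{T}$ (its support is a proper open interval). To upgrade to a uniform lower bound $\beta>0$, I split the index set: for $|k|$ larger than some $K$ the Riemann--Lebesgue lemma yields $|\widehat{g}(k)| \leq \tfrac{1}{4\pi}$, while for $0 < |k| \leq K$ the minimum of the finitely many strictly positive values $m_{k,k}$ is positive. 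Part (iv) is then a one-line consequence via Cauchy--Schwarz: $\|G\psi_k\|_{L^2} \geq |\langle G\psi_k,\psi_k\rangle_{L^2}| = |m_{k,k}| \geq \beta$, so $\delta_k \geq \beta^2$ and any $\delta \in (0,\beta^2)$ works.

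For (iii), Hermiticity $m_{j,k} = \overline{m_{k,j}}$ is immediate from the closed-form expression. To prove invertibility I would show that $M_n$ is in fact positive definite, starting from the identity
\[
\langle G\phi, \phi\rangle_{L^2} = \int_0^{2\pi} g(x)\,|\phi(x)|^2\,dx - \left|\int_0^{2\pi} \phi(x)\,g(x)\,dx\right|^2,\qquad \phi\in L^2_p(\mathbb{T}),
\]
which by Cauchy--Schwarz with weight $g$ (using $\int_0^{2\pi} g\,dx = 1$) is $\geq 0$, with equality iff $\phi$ is constant on $\omega=\text{supp}\,g$. Plugging in $\phi=\sum_{j=1}^n c_j \psi_{k_j}$ gives $c^*M_n c = \langle G\phi, \phi\rangle \geq 0$.

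The main obstacle is promoting this to strict positive-definiteness: because $G$ has a large kernel direction (any function constant on $\omega$), one must exploit the special structure of the test vector $\phi$. Here $\phi$ is a trigonometric polynomial, hence real-analytic; if it were constant on the open arc $\omega$, analytic continuation would force it to be constant on all of $\mathbb{T}$, and the assumption $k_j\neq 0$ for all $j$ gives $\widehat{\phi}(0)=0$, forcing that constant to be $0$. Hence $c_j=0$ for all $j$, $M_n$ is positive definite, and in particular invertible.
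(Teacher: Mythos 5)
Your proposal is correct, and it is more self-contained than what the paper actually does: the paper proves only (ii) and (v) ``directly from the definition'' and outsources (i) and (iii) to \cite[p.~296]{Micu Ortega Rosier and Zhang} and (iv) to \cite[p.~3650]{10} and \cite[p.~213]{1}. Your argument is essentially the standard one found in those references, reconstructed in full: the closed form $m_{j,k}=\widehat{g}(k-j)-2\pi\widehat{g}(-j)\widehat{g}(k)$ gives (ii) and (v) immediately and reduces (i) to the strict inequality $|\widehat{g}(k)|<\widehat{g}(0)$ for $k\neq0$ (valid since $e^{-ikx}$ is nonconstant on the open arc $\omega$ where $g>0$), upgraded to a uniform bound by Riemann--Lebesgue plus a finite minimum; your derivation of (iv) from (i) via Cauchy--Schwarz, $\|G\psi_k\|_{L^2}\geq |m_{k,k}|\geq\beta$, is cleaner than re-deriving $\delta_k$ from scratch. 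For (iii), the identity $c^*M_nc=\int_0^{2\pi}g|\phi|^2\,dx-\bigl|\int_0^{2\pi}g\phi\,dx\bigr|^2\geq0$ together with the equality analysis (a trigonometric polynomial constant on an open arc is constant everywhere, and $\widehat{\phi}(0)=0$ forces that constant to vanish) is exactly the right mechanism and is what the cited sources use. The only point worth flagging is that invertibility of $M_n$ requires the $k_j$ to be pairwise distinct (otherwise $M_n$ has repeated rows); this hypothesis is implicit in the lemma as stated and in your final step, where you pass from $\phi\equiv0$ to $c_j=0$ using linear independence of the $\psi_{k_j}$. It would be worth making it explicit.
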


\begin{proof}
The proof of parts (i) and (iii) can be found in  \cite[page 296]{Micu Ortega Rosier and Zhang}.
Part (iv) was proved in \cite[page 3650]{10} (see also  \cite[page 213]{1}). Parts (ii) and (v) are direct consequences of the definition in \eqref{invertmatrix1}.
\end{proof}

Now we give the proof of our first criterion regarding controllability of non-homogenous linear system \eqref{introduc2} stated in Theorem \ref{ControlLa}.

\begin{proof}[Proof of Theorem \ref{ControlLa}] As we already discussed, it suffices to assume $u_{0}=0.$ Let us start by performing a suitable decomposition of $\mathbb{Z}$.  Indeed, in view of $(H3)$ there are only
 finitely many integers in $\mathbb{I},$ say, $k_{j},$ $j=1,2,\cdots,n_{0}^{\ast},$ for some $n_{0}^{\ast}\in \mathbb{N},$
 such that one can find another integer $k\neq k_{j}$
 with $\lambda_{k}=\lambda_{k_{j}}.$ By setting
 $$\mathbb{I}_{j}:=\{k\in \mathbb{Z}: k\neq k_{j}, \lambda_{k}=\lambda_{k_{j}}\},\;\;\;\;j=1,2,\cdots,n_{0}^{\ast},
 $$
we then get the  pairwise disjoint union,
 \begin{equation}\label{zdecomp}
 \mathbb{Z}=\mathbb{I}\cup\mathbb{I}_{1}\cup\mathbb{I}_{2}\cup\cdots\cup\mathbb{I}_{n_{0}^{\ast}}.
 \end{equation}

We now prove the theorem in six steps.

\noindent
	{\bf{Step 1.}} The family $\{e^{-i\lambda_{k}t}\}_{k\in \mathbb{I}}$, with $\lambda_{k}=k a(k)$, is a Riesz basis for $H:=\overline{\text{span}\{e^{-i\lambda_{k}t}: k\in \mathbb{I}\}}$ in $L^{2}([0,T]).$

In fact, since  $L^{2}([0,T])$ is a reflexive separable  Hilbert space so is $H$. In addition, by definition, it is clear that  $\{e^{-i\lambda_{k}t}\}_{k\in \mathbb{I}}$ is complete in $H$.
On the other hand,  from \eqref{gammaseg}-\eqref{gammalinha}  and  Theorem \ref{InghamG2},  there exist positive constants $A$ and $B$  such that
\begin{equation}\label{indesg}
 A\sum_{n \in \mathbb{I}}|b_{n}|^{2}\leq \int_{0}^{T}|f(t)|^{2}dt\leq B\sum_{n \in \mathbb{I}}|b_{n}|^{2},
\end{equation}
for all functions of the form  $f(t)=\sum\limits_{ n \in \mathbb{I}}b_{n}e^{-i\lambda_{n}t},$ $t\in [0,T]$,    with  square-summable complex coefficients $b_{n}.$ In particular, if $b_{1},...,b_{N}$ are $N$ arbitrary  constants we have
 $$\displaystyle{A\sum_{n=1}^{N}|b_{n}|^{2}\leq \left\|\sum_{n=1}^Nb_{n}e^{-i\lambda_{n}t}\right\|^{2}_{H}
 \leq B\sum_{n=1}^{N}|b_{n}|^{2}.}$$
Hence, an application of Theorem \ref{BasRieszTheo} gives the desired property.

\noindent
	{\bf{Step 2.}} There exists a
unique biorthogonal basis $\{q_{j}\}_{j\in \mathbb{I}}\subseteq H^{\ast}$ to $\{e^{-i\lambda_{k}t}\}_{k\in \mathbb{I}}$.

 Indeed, Step 1 and Theorem \ref{BasRieszTheo}  implies that
$\{e^{-i\lambda_{k}t}\}_{k\in \mathbb{I}}$ is a complete Bessel sequence and possesses a biorthogonal system $\{q_{j}\}_{j\in \mathbb{I}}$ which is also
a complete Bessel sequence. Moreover,  Corollary 5.22 in {\cite[page 171]{9}} implies that $\{q_{j}\}_{j\in \mathbb{I}}$ is also a basis for $H$ (after identifying $H^*$ and $H$).
So, from Lemma 5.4 {\cite[page 155]{9}}, we get that $\{e^{-i\lambda_{k}t}\}_{k\in \mathbb{I}}$ is a minimal sequence in $H$; and, hence, exact (see \cite[Definition 5.3]{9}). Finally, Lemma 5.4 in {\cite[page 155]{9}} gives that $\{q_{j}\}_{j\in \mathbb{I}}$ is the unique biorthogonal basis to $\{e^{-i\lambda_{k}t}\}_{k\in \mathbb{I}}$.
Note that an immediate consequence is that
\begin{equation}\label{dualbg}
    (e^{-i\lambda_{k}t}\;,\;q_{j})_{H}=\int_{0}^{T}e^{-i\lambda_{k}t}\overline{q_{j}}(t)\;dt=\delta_{kj},\;\;\;k,j\in \mathbb{I},
\end{equation}
where $\delta_{kj}$ represents the Kronecker delta.

\noindent
	{\bf{Step 3.}}  Here we will define the appropriate control function $h.$

In fact, let $\{q_{j}\}_{j\in \mathbb{I}}$ be the sequence obtained in Step 2. The next step is to extend the sequence  $q_{j}$ for $j$ running on  $\mathbb{Z}$. In view of \eqref{zdecomp} it remains to define this sequence for indices in $\mathbb{I}_j$, $j=1,\cdots, n_{0}^{\ast}$. Furthermore,   $(H2)$ gives that $\mathbb{I}_{j}$ contains at most
$n_{0}-1$ elements. Without loss of generality, we may assume that all multiple eigenvalues have multiplicity $n_0$; otherwise we may repeat the procedure below according to the multiplicity of each eigenvalue. Thus we write
\begin{equation}\label{Ijdef}
\mathbb{I}_{j}=\{k_{j,1}, k_{j,2}, k_{j,3}, \cdots,k_{j,n_{0}-1}\},\;\;\;\;j=1,2,\cdots,n_{0}^{\ast}.
\end{equation}
To simplify notation, here and in what follows we use $k_{j,0}$ for $k_j$. Given $k_{j,l}\in \mathbb{I}_{j}$ we define
$q_{k_{j,l}}:=q_{k_{j,0}}=q_{k_{j}}$. At this point
 recall that $\lambda_{k_{j,l}}=\lambda_{k_{j}}$
 for any $j=1,2,\cdots,n_{0}^{\ast}$ and $l=0,1,2, \cdots, n_{0}-1.$
 
 Having defined $q_j$ for all $j\in\mathbb{Z}$, we now define the control function $h$ by
\begin{equation}\label{thecontrol}
    h (x,t)=\sum_{j \in \mathbb{Z}} h_{j}\;\overline{q_{j}}(t)\;\psi_{j}(x),
\end{equation}
for suitable  coefficients $h_{j}$'s to be determined later.
From the definition of $G$, we obtain
\begin{equation}\label{thecontrol1g}
{\footnotesize
\begin{split}
\int\limits_{0}^{T}
\left(G(h)(x,t),\; e^{-i\lambda_{k}(T-t)} \psi_{k}(x)\right)_{L^{2}_{p}(\mathbb{T})}dt&=
\int\limits_{0}^{T}\left(\sum_{j\in \mathbb{Z}}h_{j}\overline{q_{j}}(t)G(\psi_{j})(x,t),\;
e^{-i\lambda_{k}(T-t)} \psi_{k}(x) \right)_{L^{2}_{p}(\mathbb{T})}dt\\
&=
\sum_{j\in \mathbb{Z}}h_{j}\int\limits_{0}^{T}\overline{q_{j}}(t) e^{i\lambda_{k}(T-t)} \;dt\left(G(\psi_{j})(x),\; \psi_{k}(x)\right)_{L^{2}_{p}(\mathbb{T})}\\
&=\sum_{j\in \mathbb{Z}}h_{j} e^{i\lambda_{k}T} m_{j,k}
\int\limits_{0}^{T}\overline{q_{j}}(t)e^{-i\lambda_{k}t}\,dt,
\end{split}}
\end{equation}
with $m_{j,k}$ defined in \eqref{invertmatrix1}.

\noindent
	{\bf{Step 4.}}
Here we find  $h_{j}$'s such that $h$ defined by \eqref{thecontrol} serves as the required control function. 

First of all, note that in order to prove the first part of the theorem,  identity \eqref{thecontrol1g} and Lemma \ref{coef} yield that it suffices to choose $h_{j}$'s such that
\begin{equation}\label{h formg}
   c_{k}=\sum_{j\in \mathbb{Z}}h_{j}e^{i\lambda_{k}T}m_{j,k}
\int_{0}^{T}\overline{q_{j}}(t)e^{-i\lambda_{k}t}\;dt,
\end{equation}
where $u_{1}(x)=\sum_{n\in \mathbb{Z}}
c_{n}\psi_{n}(x)$.

We will show now that we may indeed choose $h_j$'s satisfying \eqref{h formg}. To see this, first observe that, since $c_0=0$, part (ii)  in Lemma \ref{invertmatrix} implies that \eqref{h formg} holds for $k=0$ independently of $h_{j}$'s. In particular, we may choose $h_0=0$. Next, from \eqref{dualbg}, if $$k\in \widetilde{\mathbb{I}}:=\mathbb{I}-\{k_1,\ldots,k_{n_0^*}\}$$  we see that \eqref{h formg} reduces to
$$c_{k}=h_{k}m_{k,k}\;e^{i\lambda_{k}T}.$$
Hence, in view of part (iii) in Lemma \ref{invertmatrix}, we have
\begin{equation}\label{hform3g}
	h_{k}=\frac{c_{k}\;\; e^{-i\lambda_{k}T}}{m_{k,k}}, \qquad k\in\widetilde{\mathbb{I}}.
\end{equation}
On the other hand, if $k\in\mathbb{Z}-\widetilde{\mathbb{I}}$  then $k=k_{j,l_0}$ for some $j\in\{1, \ldots,n_0^*\}$ and $l_0\in\{0,1,\ldots,n_0-1\}$. Since $\lambda_{k}=\lambda_{k_{j,l_0}}=\lambda_{k_j}$, the integral in \eqref{h formg} is zero, except for those indices in  $\mathbb{I}_{j}\cup\{k_j\}$. In particular, \eqref{h formg} reduces to
\begin{equation}\label{ckl}
	\displaystyle{c_{k_{j,l_0}}=c_k=\sum_{l=0}^{n_{0}-1}h_{k_{j,l}} m_{k_{j,l},k_{j,l_0}}e^{i\lambda_{k_{j,l_0}}T}}.
\end{equation}
When $l_0$ runs over the set $\{0,1,\ldots,n_0-1\}$, the equations in \eqref{ckl} may be seen as a linear system for $h_{k_{j,l}}$ (with $j$ fixed) whose unique solution is
\begin{equation}\label{hform4g}
	{
		\begin{pmatrix}
			h_{k_{j,0}}\\
			h_{k_{j,1}}\\
			\vdots\\
			h_{k_{j,n_{0}-1}}
		\end{pmatrix}^{\top}=
		\begin{pmatrix}
			c_{k_{j,0}} e^{-i\lambda_{k_{j,0}}T}\\
			c_{k_{j,1}} e^{-i\lambda_{k_{j,1}}T}\\
			\vdots\\
			c_{k_{j,n_{0}-1}} e^{-i\lambda_{k_{j,n_{0}-1}}T}\\
		\end{pmatrix}^{\top}
		M_{j}^{-1},\;\;\text{for} \; j=1,2,\cdots,n_{0}^{\ast},}
\end{equation}

where
$$M_{j}=
  \begin{pmatrix}
    m_{k_{j,0},k_{j,0}} & m_{k_{j,0},k_{j,1}} &  \cdots& m_{k_{j,0},k_{j,n_{0}-1}} \\
    m_{k_{j,1},k_{j,0}} & m_{k_{j,1},k_{j,1}}  &   \cdots & m_{k_{j,1},k_{j,n_{0}-1}} \\
    \vdots&  &  \ddots &\vdots\\
m_{k_{j,n_{0}-1},k_{j,0}} & m_{k_{j,n_{0}-1},k_{j,1}}  &   \cdots & m_{k_{j,n_{0}-1},k_{j,n_{0}-1}} \\
  \end{pmatrix}
.$$
Since from Lemma \ref{invertmatrix} the matrix $M_j$ is invertible, equation \eqref{hform4g} makes sense. Consequently, for any $j\in\mathbb{Z}=\mathbb{I}\cup\mathbb{I}_{1}\cup\mathbb{I}_{2}\cup\cdots\cup\mathbb{I}_{n_{0}^{\ast}}$, we may choose $h_{j}$'s according to \eqref{hform3g} and
\eqref{hform4g}.

\noindent
	{\bf{Step 5.}}
The function $h$ defined by
\eqref{thecontrol} with $h_{0}=0$ and $h_{k}$, $k\neq 0$, given  by \eqref{hform3g} and \eqref{hform4g} 
belongs to $L^{2}([0,T];H_{p}^{s}(\mathbb{T}))$.

 Indeed, recall from Step 2 that $\{q_{j}\}_{j\in \mathbb{I}}$ is a Riesz basis for $H$. Thus, from Theorem \ref{BasRieszTheo} part (3), it follows that $\{q_{j}\}_{j\in \mathbb{I}}$ is a bounded sequence in $L^2([0,T])$. Consequently, $\{q_{j}\}_{j\in \mathbb{Z}}$ is also bounded in $L^2([0,T])$. Hence, by using the explicit representation in \eqref{thecontrol}, we deduce
\begin{align}\label{hes}
\begin{split}
\|h\|^{2}_{L^{2}([0,T];H_{p}^{s}(\mathbb{T}))}
&=\frac{1}{2\pi}\sum_{k\in \mathbb{Z}} (1+|k|)^{2s}|h_{k}|^2 \int_{0}^{T}|q_{k}(t)|^{2}\;dt\\
&\leq C \sum_{k\in \mathbb{Z}}(1+|k|)^{2s}|h_{k}|^{2},
\end{split}
\end{align}
for some positive constant $C$.
Thus, from identity \eqref{hform3g} and Lemma \ref{invertmatrix} part (ii)  we obtain
\begin{equation}\label{hform5g}
\begin{split}
\|h\|^{2}_{L^{2}([0,T];H_{p}^{s}(\mathbb{T}))}
&\leq C
{\sum_{ k\in \widetilde{\mathbb{I}},k\neq 0}} (1+|k|)^{2s}\left|\frac{c_{k} e^{-i\lambda_{k}T}}{m_{k,k}}\right|^{2}
+ C \sum_{k\in\mathbb{Z}-\widetilde{\mathbb{I}}}(1+|k|)^{2s}|h_{k}|^{2} \\
&\leq \frac{C}{\beta^{2}}{\sum_{ k\in \widetilde{\mathbb{I}},k\neq 0}} (1+|k|)^{2s}\left|c_{k}\right|^{2}
+ C \sum_{k\in\mathbb{Z}-\widetilde{\mathbb{I}}}(1+|k|)^{2s}|h_{k}|^{2}.
\end{split}
\end{equation}
Since $u_1\in H^s_p(\mathbb{T})$ the above series converges. In addition, since the set  $\mathbb{Z}-\widetilde{\mathbb{I}}$ is finite we conclude that the right-hand side of \eqref{hform5g} is finite, implying that $h$ belongs to $L^{2}([0,T];H_{p}^{s}(\mathbb{T}))$.

In order to complete the proof of the theorem it remains to establish \eqref{hboun}.

\noindent
{\bf{Step 6.}} Estimate \eqref{hboun} holds.

From Step 5 we see that we need to estimate de second term on the right-hand side of  \eqref{hform5g}. So, fix some nonzero $k\in\mathbb{Z}-\widetilde{\mathbb{I}}$. We may write $k=k_{j,l}$ for some $l=0,1,2,\cdots,n_{0}-1$ and $j=1,2,\cdots,n_{0}^{\ast}$. From \eqref{hform4g} we infer
\begin{align*}
\begin{split}
|h_{k_{j,l}}|^{2}&\leq \sum_{m=0}^{n_{0}-1}|h_{k_{j,m}}|^{2}
\leq \left( \sum_{m=0}^{n_{0}-1}\left|c_{k_{j,m}}e^{-i\lambda_{k_{j,m}}T}\right|^{2} \right)\|M_{j}^{-1}\|^{2}
\leq \|M_{j}^{-1}\|^{2}\sum_{m=0}^{n_{0}-1}|c_{k_{j,m}}|^{2},
\end{split}
\end{align*}
where $\|M_{j}^{-1}\|$ is the Euclidean norm of the matrix $M_{j}^{-1}.$
This implies that
\begin{equation*}
\begin{split}
(1+|k_{j,l}|)^{2s}|h_{j,l}|^{2}
& \leq\sum_{m=0}^{n_{0}-1}\|M_{j}^{-1}\|^{2}
\frac{(1+|k_{j,l}|)^{2s}}{(1+|k_{j,m}|)^{2s}}(1+|k_{j,m}|)^{2s}|c_{k_{j,m}}|^{2}\\
 &\leq C(s)\sum_{m=0}^{n_{0}-1}(1+|k_{j,m}|)^{2s}|c_{k_{j,m}}|^{2},
\end{split}
\end{equation*}
with $$\displaystyle{C(s)=\underset{m,l=0,1,2,\cdots, n_{0}-1}{\max_{j=1,2,...,n_{0}^{\ast}}}
 \left\{\|M_{j}^{-1}\|^{2}\frac{(1+|k_{j,l}|)^{2s}}{(1+|k_{j,m}|)^{2s}}\right\} }.$$
Therefore,
\begin{equation}\label{hform7g}
\begin{split}
\sum_{k\in\mathbb{Z}-\widetilde{\mathbb{I}}}(1+|k|)^{2s}|h_{k}|^{2}&=\sum_{j=1}^{n_0^*}\sum_{l=0}^{n_0-1}(1+|k_{j,l}|)^{2s}|h_{k_{j,l}}|^2\\
&\leq C(s)n_0\sum_{m=1}^{n_0^*}\sum_{l=0}^{n_0-1}(1+|k_{j,m}|)^{2s}|c_{k_{j,m}}|^2\\
&=C(s)n_0\sum_{k\in\mathbb{Z}-\widetilde{\mathbb{I}}}(1+|k|)^{2s}|c_{k}|^{2}.
\end{split}
\end{equation}
Gathering together \eqref{hform5g} and \eqref{hform7g}, we obtain
\begin{equation}\label{cota}
\begin{split}
\|h\|^{2}_{L^{2}([0,T];H_{p}^{s}(\mathbb{T}))}
&\leq \frac{C}{\beta^{2}}{\sum_{ k\in \widetilde{\mathbb{I}},k\neq 0}} (1+|k|)^{2s}\left|c_{k}\right|^{2}
+ C C(s)n_0\sum_{k\in\mathbb{Z}-\widetilde{\mathbb{I}}}(1+|k|)^{2s}|c_{k}|^{2}\\
&\leq \nu^2\|u_1\|^2_{H^s_p(\mathbb{T})},
\end{split}
\end{equation}
where $\displaystyle{\nu^{2}=\max\left\{\frac{ C }{\beta^{2}},\; n_{0}CC(s)\right\}}$.

 This completes the proof of the theorem.
\end{proof}

Now we  proof  our second criterion regarding controllability of non-homogenous linear system \eqref{introduc2}  stated in Theorem \ref{ControlLag}.

\begin{proof}[Proof of Theorem \ref{ControlLag}]
The proof  is similar to that of Theorem \ref{ControlLa}. So we bring only the necessary changes and estimates. As before, we assume $u_{0}=0.$ In view of $(H4),\;(H5)$ we may find
 finitely many integers in $\mathbb{J},$ say, $k_{j},$ $j=1,2,\cdots,n_{0}^{\ast},$ for some $n_{0}^{\ast}\in \mathbb{N},$ with $n_{0}^{\ast}\leq 2 k_{1}^{\ast}-1,$
 such that one can find another integer $k\neq k_{j}$
 with $\lambda_{k}=\lambda_{k_{j}}.$ Let
 $$\mathbb{J}_{j}:=\{k\in \mathbb{Z}: k\neq k_{j}, \lambda_{k}=\lambda_{k_{j}}\},\;\;\;\;j=1,2,\cdots,n_{0}^{\ast},$$
and
$$\mathbb{J}^{-}:= \{ k\in \mathbb{Z}: k\leq -k_{1}^{\ast} \}.$$
Then we obtain the pairwise disjoint decomposition
 \begin{equation}\label{zde1}
 \mathbb{Z}=\mathbb{J}^{-} \cup\mathbb{J}\cup\mathbb{J}_{1}\cup\mathbb{J}_{2}\cup\cdots\cup\mathbb{J}_{n_{0}^{\ast}}.
 \end{equation}

Again, we may prove the theorem into six steps.

\noindent
	{\bf{Step 1.}} The family $\{e^{-i\lambda_{k}t}\}_{k\in \mathbb{J}}$, with $\lambda_{k}=k a(k)$, is a Riesz basis for $H:=\overline{\text{span}\{e^{-i\lambda_{k}t}: k\in \mathbb{J}\}}$ in $L^{2}([0,T])$.

In fact, this is a consequence of \eqref{gammasegg}-\eqref{gammalinhag},  Theorem \ref{InghamG2}, and Theorem \ref{BasRieszTheo}.

\noindent
	{\bf{Step 2.}} There exists a
	unique biorthogonal basis $\{q_{j}\}_{j\in \mathbb{J}}\subseteq H^{\ast}$ to $\{e^{-i\lambda_{k}t}\}_{k\in \mathbb{J}}$.

 This is a consequence of Theorem \ref{BasRieszTheo},  Corollary 5.22 in \cite{9}, and  Lemma 5.4 \cite{9}. Furthermore, we have that
\begin{equation}\label{dualbgg}
    (e^{-i\lambda_{k}t}\;,\;q_{j})_{H}=\int_{0}^{T}e^{-i\lambda_{k}t}\overline{q_{j}}(t)\;dt=\delta_{kj},\;\;\;k,j\in \mathbb{J}.
\end{equation}

\noindent
	{\bf{Step 3.}}  Here we will define an adequate control function $h.$
	
	As in \eqref{thecontrol}, for suitable coefficients $h_j$ to be determined later we set
	\begin{equation}\label{thecontrolg}
	h(x,t)=\sum_{j \in \mathbb{Z}} h_{j}\;\overline{q_{j}}(t)\;\psi_{j}(x),
	\end{equation}
where, according to the decomposition \eqref{zde1}, the sequence $\{q_k\}_{k\in\mathbb{Z}}$ is defined as follows: if $k\in \mathbb{J}$ then $q_k$ is given in Step 2; if $k\in \mathbb{J}_j$ for some $j\in\{1,\ldots,n_0^*\}$ then by writing (assuming that all multiple eigenvalues have multiplicity $n_0$)
$$
\mathbb{J}_{j}=\{k_{j,1}, k_{j,2}, k_{j,3}, \cdots,k_{j,n_{0}-1}\},
$$
and denoting $k_{j}$ by $k_{j,0}$ we set
$$
q_k=q_{k_{j,l}}:=q_{k_{j,0}}=q_{k_{j}}.
$$
Finally, if $k\in \mathbb{J}^-$ then we set
$$
q_k=q_{-k}.
$$
With this choice of $\{q_k\}_{k\in\mathbb{Z}}$, as in \eqref{thecontrol1g} we have
\begin{equation}\label{thecontrol1gg}
\int_{0}^{T}\left(G(h)(x,t),\; e^{-i\lambda_{k}(T-t)} \psi_{k}(x)\right)_{L^{2}_{p}(\mathbb{T})}dt
=\sum_{j\in \mathbb{Z}}h_{j} e^{i\lambda_{k}T} m_{j,k}
\int_{0}^{T}\overline{q_{j}}(t)e^{-i\lambda_{k}t}\;dt.
\end{equation}

\noindent
	{\bf{Step 4.}}
 In this step we find  $h_{j}$'s such that $h$ defined by \eqref{thecontrolg} serves as the required control function.

 By writing 
$u_{1}(x)=\sum_{n\in \mathbb{Z}}
c_{n}\psi_{n}(x)$, it is enough to consider $h_{j}$'s satisfying
\begin{equation}\label{h formgg}
   c_{k}=\sum_{j\in \mathbb{Z}}h_{j}e^{i\lambda_{k}T}m_{j,k}
\int_{0}^{T}\overline{q_{j}}(t)e^{-i\lambda_{k}t}\;dt.
\end{equation}
From Lemma \ref{invertmatrix} part (ii) we may take $h_0=0$. To see that we can choose $h_j$ such that \eqref{h formgg} holds let us start by defining the following sets of indices
$$
\mathbb{J}^+:=\left\{k\in \mathbb{Z}\,:\, k\geq k_1^*\right\},
$$
$$
\widetilde{\mathbb{J}}=\left\{k\in \mathbb{Z}\,: k= k_{j,l};\;\;
l=0,1,2,\cdots,n_{0}-1,\;\;\;j=1,2,\cdots,n_{0}^{\ast}\right\},
$$
and
$$
\widetilde{\mathbb{I}}=\left\{k\in \mathbb{Z}\,: \, k\notin  \mathbb{J}^{+}\cup \mathbb{J}^{-} \;\text{and}\;k\notin\widetilde{\mathbb{J}} \right\}.
$$
It is clear that $\mathbb{Z}=\widetilde{\mathbb{I}}\cup \widetilde{\mathbb{J}}\cup \mathbb{J}^+\cup \mathbb{J}^-$. In addition, note that $\widetilde{\mathbb{I}}$ is nothing but the set of those indices for which the corresponding eigenvalue is simple. Without loss of generality we will assume that $\widetilde{\mathbb{I}}$ is nonempty; otherwise, this part has no contribution and these indices do not appear in \eqref{h formgg}.

The idea now is to obtain $h_k$ according to $k\in \widetilde{\mathbb{I}}$, $k\in \widetilde{\mathbb{J}}$, or $k\in \mathbb{J}^+\cup \mathbb{J}^-$.
  From \eqref{dualbgg} we see that \eqref{h formgg} reduces to
$$c_{k}=h_{k}m_{k,k}\;e^{i\lambda_{k}T},\;\;\;k\in \widetilde{\mathbb{I}}.$$
Therefore, 
\begin{equation}\label{hform3gg}
h_{k}=\frac{c_{k}\;\; e^{-i\lambda_{k}T}}{m_{k,k}},\;\;k\in \widetilde{\mathbb{I}}.
\end{equation}

Next, if $k\in \widetilde{\mathbb{J}}$,  then $k=k_{j,l_0}$ for some $j\in\{1, \ldots,n_0^*\}$ and $l_0\in\{0,1,\ldots,n_0-1\}$. Thus, as in Step 4 of Theorem \ref{ControlLa}, we see that
$${c_{k_{j,l_0}}=\sum_{l=0}^{n_{0}-1}h_{k_{j,l}} m_{k_{j,l},k_{j,l_0}}e^{i\lambda_{k_{j,l_0}}T}}, $$
By solving the above system for $h_{k_{j,l}}$ (with $j$ fixed and running $l_0$ over $\{0,1,\ldots,n_0-1\}$) we find
\begin{equation}\label{hform4gg}
	\begin{pmatrix}
	h_{k_{j,0}}\\
	h_{k_{j,1}}\\
	\vdots\\
	h_{k_{j,n_{0}-1}}
	\end{pmatrix}^{\top}=
	\begin{pmatrix}
	c_{k_{j,0}} e^{-i\lambda_{k_{j,0}}T}\\
	c_{k_{j,1}} e^{-i\lambda_{k_{j,1}}T}\\
	\vdots\\
	c_{k_{j,n_{0}-1}} e^{-i\lambda_{k_{j,n_{0}-1}}T}\\
	\end{pmatrix}^{\top}
	\tilde{M}_{j}^{-1},\;\; \; j=1,2,\cdots,n_{0}^{\ast},
\end{equation}
where
$$\tilde{M}_{j}=
\begin{pmatrix}
m_{k_{j,0},k_{j,0}} & m_{k_{j,0},k_{j,1}} &  \cdots& m_{k_{j,0},k_{j,n_{0}-1}} \\
m_{k_{j,1},k_{j,0}} & m_{k_{j,1},k_{j,1}} &   \cdots & m_{k_{j,1},k_{j,n_{0}-1}} \\
\vdots&  &  \ddots &\vdots\\
m_{k_{j,n_{0}-1},k_{j,0}} & m_{k_{j,n_{0}-1},k_{j,1}}  &   \cdots & m_{k_{j,n_{0}-1},k_{j,n_{0}-1}} \\
\end{pmatrix}.$$
Finally, if $k\in \mathbb{J}^{+}$ we have $-k \in \mathbb{J}^{-}$ and $I(k)=\{k,-k\}$. We deduce from \eqref{h formgg} that
\[
\begin{cases}
\displaystyle{c_{k}=h_{k} e^{i\lambda_{k}T}  m_{k,k} + h_{-k} e^{i\lambda_{k}T}  m_{-k,k}},    \\
\displaystyle{c_{-k}=h_{k} e^{i\lambda_{-k}T}  m_{k,-k} + h_{-k} e^{i\lambda_{-k}T}  m_{-k,-k}}.
\end{cases}
\]
Solving this system for $h_k$ and $h_{-k}$ we obtain
\begin{equation}\label{even}
\begin{pmatrix}
 h_{k}\\
h_{-k}\\
\end{pmatrix}^{\top}
=\begin{pmatrix}
c_{k} e^{-i\lambda_{k}T}\\
c_{-k} e^{-i\lambda_{-k}T}\\
\end{pmatrix}^{\top}M^{-1},\quad  k\in \mathbb{J}^{+}
\end{equation}
where
$$M^{-1}=\frac{1}{d_k}
  \begin{pmatrix}
{ m_{-k,-k} }    &  -{ m_{k,-k} }  \\
-{ m_{-k,k} }    &  { m_{k,k} }  \\
  \end{pmatrix}, \qquad d_{k}=m_{k,k}m_{-k,-k}-m_{k,-k}m_{-k.k}. $$
Summarizing the above construction, we see that, for any $k\in \mathbb{Z}$, we may choose $h_{k}$ according to \eqref{hform3gg},
\eqref{hform4gg}, and \eqref{even}.

Next we observe that the matrix $M^{-1}$ is bounded uniformly with respect to $k$. Indeed,
 from Lemma \ref{invertmatrix} part (v) we infer that $d_{k}=|m_{k,k}|^{2}-|m_{k,-k}|^{2}.$ Now, from the definition of $G$,
$$
{\normalsize
\begin{split}
m_{k,-k}&= \frac{1}{2\pi}\left[\int_{0}^{2\pi}g(x)e^{i2kx}dx- \left(\int_{0}^{2\pi}g(x)e^{ikx}dx\right) \left(\int_{0}^{2\pi}g(y)e^{iky}dy\right)\right]\\
&=\widehat{g}(-2k)-2\pi[\widehat{g}(-k)]^2.
\end{split}}
$$
Since $g$ is smooth, using the Riemann-Lebesgue lemma we obtain
$$
\lim_{k\rightarrow +\infty}|m_{k,-k}|=0.
$$
On the other hand, in view of \eqref{gcondition},   for any $k\neq0$,
$$m_{k,k}=\frac{1}{2\pi}\left(\int_{0}^{2\pi}g(x)dx-\left|\int_{0}^{2\pi}g(x)e^{ikx}dx\right|^{2}\right)=\frac{1}{2\pi}-|\widehat{g}(-k)|^2,$$ 
and, hence,
$$
\lim_{k\rightarrow +\infty}m_{k,k}=\frac{1}{2\pi}.
$$
Since
$$
\lim_{k\rightarrow +\infty}d_k=\frac{1}{4\pi^2},
$$
we may assume, without loss of generality, that $d_k>\frac{1}{8\pi^2}$, for any $k\geq k_1^*$. Therefore, there exists $D>0$, independent of $k\in \mathbb{J}^+$, such that
\begin{equation}\label{determ2}
    \|M^{-1}\|\leq D,
\end{equation}
where $\|M^{-1}\|$ is the Euclidean norm of the matrix $M^{-1}.$

\noindent
	{\bf{Step 5.}}
The control function $h$ defined by \eqref{thecontrolg} with $h_{0}=0,$ and $h_{k}$, $k\neq0$, given by \eqref{hform3gg}, \eqref{hform4gg}, and \eqref{even} belongs to $L^{2}([0,T];H_{p}^{s}(\mathbb{T}))$.

 Indeed, as in \eqref{hes} we obtain
\begin{align*}
\begin{split}
\|h\|^{2}_{L^{2}([0,T];H_{p}^{s}(\mathbb{T}))}
&\leq C \sum_{k\in \mathbb{Z}}(1+|k|)^{2s}\;|h_{k}|^{2},
\end{split}
\end{align*}
for some positive constant $C$. Next, in the series above we split the sum according to $k\in \widetilde{\mathbb{I}}$,  $k\in \widetilde{\mathbb{J}}$ or $k\in  \mathbb{J}^{+}\cup \mathbb{J}^{-}$.
Thus, we may write
\begin{equation}\label{hform5gg}
\begin{split}
\|h\|^{2}_{L^{2}([0,T];H_{p}^{s}(\mathbb{T}))}
&\leq C
{\sum_{ k\in \widetilde{\mathbb{I}}}} (1+|k|)^{2s}\left|h_{k}\right|^{2}
+{\sum_{ k\in \widetilde{\mathbb{J}}}} (1+|k|)^{2s}\left|h_{k}\right|^{2}\\
&\quad + C \sum_{k\in  \mathbb{J}^{+}\cup\mathbb{J}^{-} }(1+|k|)^{2s}|h_{k}|^{2}.
\end{split}
\end{equation}
The first two terms on the right-hand side of \eqref{hform5gg} may be estimated as in Theorem \ref{ControlLa} (see \eqref{hform7g}). Thus,
\begin{equation}\label{hform5ggg}
	\begin{split}
		\|h\|^{2}_{L^{2}([0,T];H_{p}^{s}(\mathbb{T}))}
		&\leq \frac{C}{\beta^2}
		{\sum_{ k\in \widetilde{\mathbb{I}}}} (1+|k|)^{2s}\left|c_{k}\right|^{2}
		+ CC(s)n_0 	{\sum_{ k\in \widetilde{\mathbb{J}}}} (1+|k|)^{2s}\left|c_{k}\right|^{2}\\
		&\quad + C \sum_{k\in  \mathbb{J}^{+}\cup\mathbb{J}^{-} }(1+|k|)^{2s}|h_{k}|^{2},
	\end{split}
\end{equation}
where $\displaystyle{C(s)=\underset{m,l=0,1,2,\cdots, n_{0}-1}{\max_{j=1,2,...,n_{0}^{\ast}}}
 \left\{\|\tilde{M}_{j}^{-1}\|^{2}\frac{(1+|k_{j,l}|)^{2s}}{(1+|k_{j,m}|)^{2s}}\right\} }.$\\

For the last term in \eqref{hform5ggg}, identity \eqref{even} and \eqref{determ2} imply that for any $ k\in  \mathbb{J}^{+}$,
\begin{align*}
\begin{split}
|h_{k}|^{2}&
\leq \left( \left|c_{k}\right|^{2} + \left|c_{-k}\right|^{2}\right)\|M^{-1}\|^{2} \leq
\left( \left|c_{k}\right|^{2} + \left|c_{-k}\right|^{2}\right)D^{2},
\end{split}
\end{align*}
and
\begin{equation}\label{even1}
\begin{split}
(1+|k|)^{2s}|h_{k}|^{2}&\leq
D^{2} (1+|k|)^{2s}\left|c_{k}\right|^{2} + D^{2} 
(1+|-k|)^{2s}  \left|c_{-k}\right|^{2}.
\end{split}
\end{equation}
Since the right-hand side of \eqref{even1} is symmetric with respect to $k$, the same estimate holds for $k\in \mathbb{J}^{-},$ from which we deduce that
\begin{equation}\label{even2}
	\sum_{k\in  \mathbb{J}^{+}\cup\mathbb{J}^{-} }(1+|k|)^{2s}|h_{k}|^{2}\leq 2D^2 \sum_{k\in  \mathbb{J}^{+}\cup\mathbb{J}^{-} }(1+|k|)^{2s}|c_{k}|^{2}.
\end{equation}
Combining \eqref{hform5ggg} with \eqref{even1} we get that $h$ belongs to $L^{2}([0,T];H_{p}^{s}(\mathbb{T}))$.

\noindent
{\bf{Step 5.}} Estimate \eqref{hboun} holds.

In view of \eqref{hform5ggg} and \eqref{even2}, we obtain
\begin{equation*}
	\begin{split}
		\|h\|^{2}_{L^{2}([0,T];H_{p}^{s}(\mathbb{T}))}
		&\leq \frac{C}{\beta^2}
		{\sum_{ k\in \widetilde{\mathbb{I}}}} (1+|k|)^{2s}\left|c_{k}\right|^{2}
		+ CC(s)n_0 	{\sum_{ k\in \widetilde{\mathbb{J}}}} (1+|k|)^{2s}\left|c_{k}\right|^{2}\\
		&\quad + 2D^2C \sum_{k\in  \mathbb{J}^{+}\cup\mathbb{J}^{-} }(1+|k|)^{2s}|c_{k}|^{2}\\
		&\leq  \nu^2\,\|u_1\|_{H_{p}^{s}(\mathbb{T})}^2,
	\end{split}
\end{equation*}
where $\nu^{2}=\max\left\{\frac{C}{\beta^{2}}, \;n_{0}CC(s),\; 2CD^{2}\right\}$.

 This completes the proof of the theorem.
\end{proof}

\begin{rem}
The dependence of $\nu$ with respect to $T$ is implicit in the constant $C$ which may depend on the time $T$.
\end{rem}

As an immediate consequence of Theorems \ref{ControlLa} and \ref{ControlLag} we get the following corollary.

\begin{cor} \label{controloperator}
For $s\in \mathbb{R}$ and $T> \frac{2\pi}{\gamma'}$ given, there exists a unique
bounded linear  operator
$$\left\{\begin{array}{lclc}
	\Phi:& H_{p}^{s}(\mathbb{T})\times H_{p}^{s}(\mathbb{T}) & \longrightarrow & L^{2}([0,T];H_{p}^{s}(\mathbb{T}))\\
	&  (u_0,u_1)&  \longmapsto &\Phi(u_{0},u_{1}):=h\\
\end{array}
\right.
$$
such that
\begin{equation}\label{cont}
u_{1}=U(T)u_{0}+\int_{0}^{T}U(T-s)(G(\Phi(u_{0},u_{1})))(\cdot,s)\;ds
\end{equation}
 and
\begin{equation}\label{oprestima}
\|\Phi(u_{0},u_{1})\|_{L^{2}([0,T];H_{p}^{s}(\mathbb{T}))} \leq \nu\; (\|u_{0}\|_{H_{p}^{s}(\mathbb{T})}
+\|u_{1}\|_{H_{p}^{s}(\mathbb{T})}),
\end{equation}
for some positive constant $\nu$.
\end{cor}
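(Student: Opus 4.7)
The plan is to build $\Phi$ by recycling the explicit control construction from the proofs of Theorems \ref{ControlLa} and \ref{ControlLag}. Given any pair $(u_0,u_1)\in H_p^s(\mathbb{T})\times H_p^s(\mathbb{T})$, I first set $v_1:=u_1-U(T)u_0$; then \eqref{cont} is equivalent to requiring that the solution of \eqref{introduc2} with initial data $0$ reach $v_1$ at time $T$. Expanding $v_1=\sum_{k\in\mathbb{Z}}c_k\psi_k$, I would define $\Phi(u_0,u_1):=h$ by the series \eqref{thecontrol} (resp.\ \eqref{thecontrolg}), whose coefficients $h_j$ are given by \eqref{hform3g}--\eqref{hform4g} under the hypotheses of Theorem \ref{ControlLa}, or by \eqref{hform3gg}, \eqref{hform4gg}, \eqref{even} under those of Theorem \ref{ControlLag}.

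For linearity, each Fourier coefficient $c_k=(v_1,\psi_k)_{L^2_p(\mathbb{T})}$ depends linearly on $v_1$, and each $h_j$ is a fixed linear combination of finitely many $c_k$'s (either a single term as in \eqref{hform3g}, or a combination through the inverse of one of the matrices $M_j$, $\tilde M_j$, $M$, all of which are fixed independently of the data). Since $(u_0,u_1)\mapsto v_1=u_1-U(T)u_0$ is itself linear, the composition $\Phi$ is linear. Relation \eqref{cont} then follows from the conclusion $u(T)=v_1$ in Theorems \ref{ControlLa}/\ref{ControlLag} combined with the variation-of-constants formula \eqref{inteeq} applied to the original initial datum $u_0$.

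For the bound \eqref{oprestima}, I would invoke \eqref{hboun} together with the fact that $\{U(t)\}_{t\in\mathbb{R}}$ is a unitary group on $H_p^s(\mathbb{T})$, so that $\|U(T)u_0\|_{H_p^s(\mathbb{T})}=\|u_0\|_{H_p^s(\mathbb{T})}$; this yields
\[
\|\Phi(u_0,u_1)\|_{L^2([0,T];H_p^s(\mathbb{T}))}\le \nu\,\|v_1\|_{H_p^s(\mathbb{T})}\le \nu\bigl(\|u_1\|_{H_p^s(\mathbb{T})}+\|u_0\|_{H_p^s(\mathbb{T})}\bigr),
\]
possibly after absorbing a harmless constant into $\nu$.

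The delicate point is the uniqueness assertion, since the moment equations \eqref{caract5} are in general underdetermined when the $\lambda_k$'s are repeated. I expect this to be the main interpretive obstacle and would address it by observing that the construction above is canonical: once the unique biorthogonal basis $\{q_j\}$ produced in Step~2 of each theorem and the explicit inverses of $M_j$, $\tilde M_j$, $M$ are fixed, the coefficients $h_j$ are determined uniquely by $(u_0,u_1)$, so the resulting operator $\Phi$ is the unique bounded linear operator arising from the moment-method construction and satisfying \eqref{cont}--\eqref{oprestima}.
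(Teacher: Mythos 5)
Your proposal is correct and follows exactly the route the paper intends: the paper gives no proof at all, simply declaring the corollary an immediate consequence of Theorems \ref{ControlLa} and \ref{ControlLag}, and your unpacking (linearity of $h_j$ in the coefficients $c_k$ of $v_1=u_1-U(T)u_0$, boundedness via \eqref{hboun} and unitarity of $U(T)$) is the standard way to make that precise. Your honest caveat about uniqueness is well taken — controls achieving \eqref{cont} are not unique in general, so the assertion can only mean uniqueness of the operator produced by the moment-method construction, which is all the paper can mean as well.
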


We end this section recalling Corollary  \ref{controloperator1} to obtain the  observability inequality, which in turn plays a fundamental role to get the exponential asymptotic stabilization with arbitrary decay rate.

 \begin{cor} \label{controloperator1g}
Let $s\in\mathbb{R}$ and $T> \frac{2\pi}{\gamma'}$ be given. There exists $\delta>0$ such that
$$\int_{0}^{T}\|G^{\ast}U(\tau)^{\ast}\phi^*\|
    ^{2}_{(H^{s}_{p}(\mathbb{T}))'}(\tau) \; d\tau \geq \delta^{2}
    \|\phi^*\|^{2}_{(H^{s}_{p}(\mathbb{T}))'}, $$
    for any $\phi^* \in (H^{s}_{p}(\mathbb{T}))'.$
 \end{cor}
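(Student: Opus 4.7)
The plan is to read Corollary~\ref{controloperator1g} as an immediate consequence of Corollary~\ref{controloperator1}, fed by the controllability already established in Theorem~\ref{ControlLa} (or Theorem~\ref{ControlLag}, depending on which set of hypotheses is in force). Under either set of assumptions, and for $T>2\pi/\gamma'$, those theorems produce, for every $u_1\in H^s_p(\mathbb{T})$ with $\widehat{u_1}(0)=0$, a control $h\in L^2([0,T];H^s_p(\mathbb{T}))$ that drives the trivial data $u_0=0$ to $u_1$ at time $T$, together with the bound \eqref{hboun}. This is exactly the ``there exists $h$ such that $u(T)=u_1$'' hypothesis appearing in Corollary~\ref{controloperator1}, so invoking the $(\Rightarrow)$ direction of that corollary yields the observability inequality stated here.

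Unpacking the last step: the controllability just quoted says that the bounded linear map
$$F_T:L^2([0,T];H^s_p(\mathbb{T}))\longrightarrow H^s_p(\mathbb{T}),\qquad F_T(h)=\int_0^T U(T-s)G(h)(s)\,ds,$$
is surjective onto the zero-mean subspace of $H^s_p(\mathbb{T})$. The Rudin/closed-range argument reproduced in the proof of Corollary~\ref{controloperator1} then delivers $\delta>0$ with $\|F_T^{\ast}\phi^{\ast}\|_{(L^2([0,T];H^s_p))'}\geq \delta\|\phi^{\ast}\|_{(H^s_p)'}$, and the identification $F_T^{\ast}=G^{\ast}U(T-\cdot)^{\ast}$ combined with the change of variables $\tau=T-t$ gives precisely the claimed inequality. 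Thus the ``proof'' is essentially one line: apply Corollary~\ref{controloperator1} with the controllability provided by Theorem~\ref{ControlLa}/\ref{ControlLag}.

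The one bookkeeping point I expect to be the main obstacle is the mass mode. Since $G$ annihilates the constants (from $G(c)=gc-g\langle c,g\rangle=0$ when $c$ is constant), the adjoint $G^{\ast}$ annihilates the constant functionals, so the inequality cannot hold literally for every $\phi^{\ast}\in (H^s_p(\mathbb{T}))'$ unless one interprets $\phi^{\ast}$ modulo constants (equivalently, on the dual of the zero-mean subspace). This is the same quotienting that was used throughout Theorems~\ref{ControlLa}--\ref{ControlLag}, where the controllability is stated under the compatibility $\widehat{u_0}(0)=\widehat{u_1}(0)$ and the subsequent reduction to $u_0=0$ with $\widehat{u_1}(0)=0$. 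Once $\phi^{\ast}$ is replaced by $\phi^{\ast}-\widehat{\phi^{\ast}}(0)$ in the statement, the identification with the dual of the controllable subspace is transparent and the inequality follows directly. No further analytical input is needed beyond what is already contained in the preceding sections.
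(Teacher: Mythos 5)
Your proposal matches the paper's own (very terse) justification: Corollary~\ref{controloperator1g} is obtained by feeding the exact controllability of Theorems~\ref{ControlLa}/\ref{ControlLag} into the controllability--observability equivalence of Corollary~\ref{controloperator1}, exactly as you describe. Your caveat about the constant mode is well taken --- since $G^{\ast}$ annihilates constants, the inequality must be read on the dual of the zero-mean subspace (equivalently, with $\phi^{\ast}$ taken modulo constants), an imprecision that is present in the paper's statement as well.
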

\begin{rem}
    If $\gamma'=+\infty$ or $\tilde{\gamma}'=+\infty$ then Corollaries \ref{controloperator} and \ref{controloperator1g} are valid for any positive time $T.$
\end{rem}

\section{Proof of Theorems \ref{st351} and \ref{estabilization}}\label{section6}
This section is devoted to prove the exponential stabilization results. Once we have the observability inequality in Corollary \ref{controloperator1g} it is well known that this implies the stabilization. So, we just give the main steps. Fist recall we are dealing with the equation
\begin{equation}\label{eq1}
	 \partial_{t}u= \partial_{x}\mathcal{A}u+Gh.
\end{equation}
Since any solution of \eqref{eq1} preserves its mass, without loss of generality, one can assume that the initial data $u_{0}$ satisfies $\widehat{u_{0}}(0)=0$ (otherwise, we perform the change of variables $\tilde{u}=u-\widehat{u_{0}}(0)$). Thus, it is enough to  study the stabilization problem  in $H_{0}^{s}(\mathbb{T})$, $s\in \mathbb{R}.$

The idea to prove Theorems \ref{st351} and \ref{estabilization} is to show the existence of a bounded linear operator, say, $K_1$ on $H_{0}^{s}(\mathbb{T})$ such that
$$
h=K_1u
$$
serves as the feedback control law. So, we study the stabilization problem for the system
\begin{equation}\label{atabilizationL2}
\begin{cases}
u\in C([0,+\infty);H_{0}^{s}(\mathbb{T}))\\
\partial_{t}u=\partial_{x}\mathcal{A}u+GK_1u \in H_{0}^{s-r}(\mathbb{T}),\quad  t>0,\\
u(0)=u_{0}\in H_{0}^{s}(\mathbb{T}), 
\end{cases}
\end{equation}

First, we prove that  system \eqref{atabilizationL2} is globally well-posed in $H_{0}^{s}(\mathbb{T})$, $s\in \mathbb{R}$.

\begin{thm}\label{solsta1}
	Let $u_{0}\in H_{0}^{r}(\mathbb{T}),$ with $r$ as in \eqref{limite}. Then the IVP \eqref{atabilizationL2}
	has a unique solution
	$$u\in C([0,\infty);H_{0}^{r}(\mathbb{T}))
	\cap C^{1}([0,\infty);L^{2}_{0}(\mathbb{T})).$$
Moreover, if $u_{0}\in H_{0}^{s}(\mathbb{T}),$ then we have  $u\in C([0,\infty);H_{0}^{s}(\mathbb{T})),$ for any $s \in \mathbb{R}.$
\end{thm}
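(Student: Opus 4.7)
The plan is to derive this well-posedness statement as a direct application of the bounded perturbation theorem for $C_0$-semigroups, using the groundwork already laid in Section \ref{section4}. I would proceed in three main steps.

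First, I would recall from Proposition \ref{OGU} and Theorem \ref{EU1} that $\partial_{x}\mathcal{A}$ generates a strongly continuous unitary group $\{U(t)\}_{t\in\mathbb{R}}$ on $H^{s}_{p}(\mathbb{T})$ for every $s\in\mathbb{R}$, and that on $L^{2}_{p}(\mathbb{T})$ the operator is skew-adjoint with domain $H^{r}_{p}(\mathbb{T})$. Since the Fourier symbol $ik\,a(k)$ vanishes at $k=0$, the mass $\widehat{u}(0)$ is conserved by the unperturbed flow, so $H^{s}_{0}(\mathbb{T})$ is a closed invariant subspace; the restrictions of $\{U(t)\}$ and of $\partial_{x}\mathcal{A}$ to these subspaces give a $C_{0}$-group on $H^{s}_{0}(\mathbb{T})$ (with domain $H^{r}_{0}(\mathbb{T})$ in the $L^{2}_{0}$-setting).

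Second, I would verify that $GK_{1}$ is a bounded endomorphism of $H^{s}_{0}(\mathbb{T})$. The operator $K_{1}$ is bounded on $H^{s}_{0}(\mathbb{T})$ by hypothesis, while the remark following \eqref{EQ1} records that $G$ is bounded on $H^{s}_{p}(\mathbb{T})$ for every $s\in\mathbb{R}$. Moreover, the computation in item (3) of that remark shows $\widehat{G\phi}(0)=0$, so $G$ in fact maps $H^{s}_{p}(\mathbb{T})$ into $H^{s}_{0}(\mathbb{T})$. Composing, $GK_{1}\in\mathcal{L}(H^{s}_{0}(\mathbb{T}))$.

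Third, by the bounded perturbation theorem (see for instance \cite[Chapter 3, Theorem 1.1]{4}), the operator $\partial_{x}\mathcal{A}+GK_{1}$ generates a $C_{0}$-semigroup $\{S_{s}(t)\}_{t\geq 0}$ on $H^{s}_{0}(\mathbb{T})$, with the same domain as $\partial_{x}\mathcal{A}$. Taking $s=0$, for $u_{0}\in H^{r}_{0}(\mathbb{T})=D(\partial_{x}\mathcal{A})$ the orbit $u(t)=S_{0}(t)u_{0}$ is a classical solution, i.e. $u\in C([0,\infty);H^{r}_{0}(\mathbb{T}))\cap C^{1}([0,\infty);L^{2}_{0}(\mathbb{T}))$, while for arbitrary $u_{0}\in H^{s}_{0}(\mathbb{T})$ the mild solution given by $u(t)=S_{s}(t)u_{0}$ lies in $C([0,\infty);H^{s}_{0}(\mathbb{T}))$; uniqueness in both cases is part of the abstract semigroup statement. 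The only point requiring genuine care is checking that $G$ preserves the zero-mean constraint and that the range of $GK_{1}$ sits inside $H^{s}_{0}(\mathbb{T})$; once this is established, nothing deeper than the bounded perturbation theorem is needed.
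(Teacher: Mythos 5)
Your proposal is correct and follows essentially the same route as the paper: the paper's proof also invokes the bounded perturbation theorem for the generator $\partial_{x}\mathcal{A}$ perturbed by the bounded operator $GK_{1}$ on $H^{s}_{0}(\mathbb{T})$, and then appeals to standard semigroup theory for the regularity and uniqueness claims. You in fact supply slightly more detail than the paper (e.g., verifying that $G$ maps into the zero-mean subspace so that $H^{s}_{0}(\mathbb{T})$ is preserved), which is a welcome addition rather than a deviation.
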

\begin{proof}
	Since
	$\partial_{x}\mathcal{A}$
	is the infinitesimal generator of a
	$C_{0}$-semigroup $\{U(t)\}_{t\geq 0}$ in $H_{0}^{s}(\mathbb{T})$ and $GK_1$ is a bounded linear operator on $H^{s}_{0}(\mathbb{T})$, we have that
	$\partial_{x}\mathcal{A}+GK_1$  is also an infinitesimal generator of a $C_{0}-$semigroup  on $H^{s}_{0}(\mathbb{T})$ (see, for instance,  \cite[page 76]{4}). Thus this a consequence of the semigroup theory.
\end{proof}

As we will see, Theorems \ref{st351} and \ref{estabilization} are consequences of the following result.

\begin{thm}\label{st35}
	Let $s\in\mathbb{R}$    be given and $g$ as in \eqref{gcondition}. For any given $\lambda>0$, there exist a bounded linear operator  $K_{1}$ on $H_{0}^{s}(\mathbb{T}))$ such that the unique solution of the closed-loop system
	\begin{equation}\label{atabilizationL2gg}
\begin{cases}
\partial_{t}u=\partial_{x}\mathcal{A}u+GK_1u,  \\
u(0)=u_{0}, 
\end{cases}
\end{equation}
 satisfies
\begin{equation}\label{c5}
	\|u(\cdot,t)\|_{H_{0}^{s}(\mathbb{T})}\leq M
	e^{-\lambda t}\|u_{0}\|_{H_{0}^{s}(\mathbb{T})},\;\;\;\text{for all}\;\;
	t\geq 0.
	\end{equation}
	where the positive constant $M$ depends on $s$ and $G$ but is independent of $u_{0}.$	
\end{thm}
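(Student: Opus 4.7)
The plan is to construct the feedback $K_{1}$ via a Komornik--Urquiza Gramian, whose coercivity is precisely the observability inequality of Corollary \ref{controloperator1g}. For fixed $\lambda>0$, I would identify $H^{s}_{0}(\mathbb{T})$ with its dual through the natural $H^{s}_{p}$-inner product so that $G^{\ast}$ becomes a bounded operator on $H^{s}_{0}(\mathbb{T})$, and define the Gramian
\begin{equation*}
P_{\lambda}\phi := \int_{0}^{\infty} e^{-2\lambda t}\, U(-t)\, GG^{\ast}\, U(t)\phi\, dt,\qquad \phi\in H^{s}_{0}(\mathbb{T}).
\end{equation*}
Since $U(t)$ is unitary on $H^{s}_{0}(\mathbb{T})$, the integrand has norm bounded by $e^{-2\lambda t}\|G\|^{2}$, so $P_{\lambda}$ is a bounded, non-negative, self-adjoint operator. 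Fixing $T_{0}>2\pi/\gamma'$ (or $2\pi/\tilde\gamma'$ under the hypotheses of Theorem \ref{ControlLag}) and applying Corollary \ref{controloperator1g},
\begin{equation*}
\langle P_{\lambda}\phi,\phi\rangle_{H^{s}_{0}} \geq e^{-2\lambda T_{0}}\int_{0}^{T_{0}}\|G^{\ast}U(t)\phi\|_{H^{s}_{0}}^{2}\,dt \geq e^{-2\lambda T_{0}}\delta^{2}\|\phi\|_{H^{s}_{0}}^{2},
\end{equation*}
so $P_{\lambda}$ is coercive and $\Lambda_{\lambda}:=P_{\lambda}^{-1}$ is bounded, self-adjoint, and positive definite on $H^{s}_{0}(\mathbb{T})$.

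Next, I would take $K_{1}:= -G^{\ast}\Lambda_{\lambda}$, which is bounded on $H^{s}_{0}(\mathbb{T})$. Because $GK_{1}$ is a bounded perturbation of $\partial_{x}\mathcal{A}$, the closed-loop operator $\partial_{x}\mathcal{A}-GG^{\ast}\Lambda_{\lambda}$ generates a $C_{0}$-semigroup on $H^{s}_{0}(\mathbb{T})$ (as in Theorem \ref{solsta1}), so \eqref{atabilizationL2gg} has a unique global solution. The algebraic heart of the proof is the Lyapunov--Riccati identity
\begin{equation}\label{riccatiproposal}
\Lambda_{\lambda}\,\partial_{x}\mathcal{A} + (\partial_{x}\mathcal{A})^{\ast}\Lambda_{\lambda} = -2\lambda\Lambda_{\lambda} + \Lambda_{\lambda}GG^{\ast}\Lambda_{\lambda},
\end{equation}
which I would derive by differentiating the translation identity
\begin{equation*}
e^{2\lambda s}\,U(s)\,P_{\lambda}\,U(-s)\phi = \int_{-s}^{\infty}e^{-2\lambda\tau}\,U(-\tau)\,GG^{\ast}\,U(\tau)\phi\,d\tau
\end{equation*}
at $s=0$ to obtain $2\lambda P_{\lambda}+\partial_{x}\mathcal{A}\,P_{\lambda}-P_{\lambda}\,\partial_{x}\mathcal{A}=GG^{\ast}$, and then conjugating by $\Lambda_{\lambda}$ while using the skew-adjointness $(\partial_{x}\mathcal{A})^{\ast}=-\partial_{x}\mathcal{A}$.

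Finally, I would set $V(t):=\langle \Lambda_{\lambda}u(t),u(t)\rangle_{H^{s}_{0}}$ and differentiate along the closed-loop dynamics. Using \eqref{riccatiproposal} and the self-adjointness of $\Lambda_{\lambda}$,
\begin{equation*}
V'(t) = \bigl\langle (\Lambda_{\lambda}\partial_{x}\mathcal{A} + (\partial_{x}\mathcal{A})^{\ast}\Lambda_{\lambda})u,u\bigr\rangle - 2\|G^{\ast}\Lambda_{\lambda}u\|^{2} = -2\lambda V(t) - \|G^{\ast}\Lambda_{\lambda}u\|^{2}\leq -2\lambda V(t),
\end{equation*}
so Gronwall yields $V(t)\leq e^{-2\lambda t}V(0)$. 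Combining this with the two-sided bound $c_{1}\|\phi\|^{2}\leq \langle\Lambda_{\lambda}\phi,\phi\rangle\leq c_{2}\|\phi\|^{2}$ (coming from the boundedness and coercivity of $\Lambda_{\lambda}$) gives \eqref{c5} with $M=\sqrt{c_{2}/c_{1}}$. The main obstacle is making \eqref{riccatiproposal} rigorous on $H^{s}_{0}(\mathbb{T})$ for arbitrary $s\in\mathbb{R}$, since $\partial_{x}\mathcal{A}$ loses $r$ derivatives; this is handled by proving the identity first on the dense core $H^{s+r}_{0}(\mathbb{T})$ (where the formal manipulations make literal sense) and then extending the Lyapunov calculation by approximation and the boundedness of $\Lambda_{\lambda}$ and $GG^{\ast}\Lambda_{\lambda}$ on $H^{s}_{0}(\mathbb{T})$.
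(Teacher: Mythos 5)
Your proposal is correct and follows essentially the same route as the paper: the paper also takes $K_{1}=-G^{\ast}L^{-1}_{T,\lambda}$ with a weighted Gramian whose coercivity is exactly the observability inequality of Corollary \ref{controloperator1g}, but it delegates the decay estimate to the cited results of Slemrod and Liu, whereas you carry out the Riccati/Lyapunov computation explicitly (and with the Gramian integrated over $[0,\infty)$ rather than $[0,T]$). The only point worth double-checking is the orientation of the group in the observability inequality — the corollary controls $\int_{0}^{T}\|G^{\ast}U(\tau)^{\ast}\phi\|^{2}\,d\tau$, i.e.\ involves $U(-\tau)$, while your coercivity bound uses $U(t)$ — but the two are equivalent via the substitution $t\mapsto T_{0}-\tau$ and the unitarity of $U$.
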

\begin{proof}
    This is a consequence of Corollary \ref{controloperator1g} and the classical principle that exact controllability implies exponential stabilizability for conservative control systems (see Theorem 2.3/Theorem 2.4 in \cite{Liu} and Theorem 2.1 \cite{Slemrod}). To be more precise, according to  \cite{Slemrod, Liu}, one can choose
$$K_{1}=-G^{\ast}L^{-1}_{T,\lambda},$$
where, for some $T>\frac{2\pi}{\gamma'}$,
\begin{equation*}
L_{T,\lambda}\phi=\int_{0}^{T}e^{-2\lambda \tau}\;U(-\tau)GG^{\ast}
U(-\tau)^{\ast}\phi\;d\tau,
\;\;\;\;\;\phi\in H_{0}^{s}(\mathbb{T}),
\end{equation*}
and $U(t)$ is the $C_0$-semigroup generated by $\partial_{x}\mathcal{A}$
(see Lemma 2.4 in \cite{14} for more details). In addition, if one simply chooses $K_{1}=-G^{\ast}$ then there exists $\alpha>0$ such that estimate \eqref{c5} holds with $\lambda$ replaced by $\alpha.$
\end{proof}

Finally, observe that Theorem \ref{st351} and Theorem \ref{estabilization} are  direct consequences of Theorem \ref{st35} just by taking $K_{1}=-G^{\ast}$ and $K_{1}=-G^{\ast}L^{-1}_{T,\lambda}$, respectively.

\section{Applications}\label{section6g}

As an application of our results, we will establish the controllability and stabilization for some linearized dispersive equations of the form \eqref{BO}.

\subsection{The linearized Smith equation}

  The nonlinear Smith equation posed on the entire real line reads as
\begin{equation}\label{ILW}
	\partial_{t}u-\partial_{x}\mathcal{A}u+u\partial_{x}u=0, \;\;\;\;x\in \mathbb{R},\;\;t\in \mathbb{R},
\end{equation}
where $u=u(x,t)$ denotes a real-valued function and  $\mathcal{A}$ is the nonlocal operator  defined by
$$
\widehat{\mathcal{A}u}(\xi):=2\pi\left(\sqrt{\xi^{2}+1}-1\right)\widehat{u}(\xi).$$
Here the hat stands for the Fourier transform on the line. Equation \eqref{ILW} was
derived by Smith in \cite{Smith} and it governs certain types of continental-shelf waves. From the mathematical viewpoint, the well-posedness of the IVP associated to \eqref{ILW}  in $H^{s}(\mathbb{R})$  has been studied for instance in \cite{Abdelouhab Bona Felland and Saut}, \cite{iorio}, and \cite{6}. In \cite[Theorems 7.1 and 7.7]{Abdelouhab Bona Felland and Saut} the authors proved that \eqref{ILW} is globally well-posed in $H^{s}(\mathbb{R})$ for $s=1$ and $s\geq 3/2$. In \cite{iorio} the author established a global well-posedness result in the weighted Sobolev space $H^s(\mathbb{R})\cap L^2((1+|x|^2)^sdx)$ for $s>3/2$.

The control equation associated with the linearized Smith equation on the periodic setting reads as
\begin{equation}\label{ILWP}
	\partial_{t}u-\partial_{x}\mathcal{A}u=Gh, \;\;\;\;x\in \mathbb{T},\;\;t\in \mathbb{R},
\end{equation}
where $\mathcal{A}$ is such that
\begin{equation}\label{ILWP1}
	\widehat{\mathcal{A}u}(k):= 2\pi\left(\sqrt{k^{2}+1}-1\right) \widehat{u}(k),\;\;\;\;k\in \mathbb{Z},
\end{equation}
so that $a(k)=2\pi\left(\sqrt{k^{2}+1}-1\right)$.

In what follows we will show that Criterion I can be applied to prove that \eqref{ILWP} is exactly controllable in any positive time $T>0$ and exponentially stabilizable  with any given decay rate in the Sobolev space $H_{p}^{s}(\mathbb{T}),$ with $s\in \mathbb{R}$. Indeed, first of all note that clearly,
$$
|a(k)|\leq C|k|,
$$
for some positive constant $C$ and any $k\in\mathbb{Z}$. In addition the quantity $2\pi\widehat{u}(0,t)$
is invariant by the flow of \eqref{ILWP}.

 Using the Fourier transform, it is easy to check that $(H1)$ holds with $\lambda_{k}=2\pi\left(k\sqrt{k^{2}+1}-k\right)$. See Figure \ref{fig3} for an illustrative picture. By noting that $y\mapsto 2\pi\left(y\sqrt{y^{2}+1}-y\right)$ is a strictly increasing function we then deduce that all eigenvalues $\lambda_{k}$ are simple, giving $(H2)$ and $(H3)$. Additionally, observe that $a(-k)=a(k)$ for any $k\in \mathbb{Z}$ and
 $$
 \displaystyle{\lim_{|k|\rightarrow +\infty}| (k+1) a(k+1)- k a(k)|=+\infty}.
 $$
 Thus we may apply Remark \ref{partic1} to conclude that Theorem \ref{ControlLa} holds for any $T>0$. Consequently, Theorems  \ref{st351} and \ref{estabilization} also hold.

 \subsection{The fourth-order Schr\"odinger equation}

Here we consider the control equation associated with the linear fourth-order Schr\"odinger equation
\begin{equation}\label{4nls}
	i\partial_{t}u+\partial_{x}^2u+\mu\partial_{x}^4u=0,
\end{equation}
where $u$ is a complex-valued function and $\mu\neq0$ is a real constant. Equation \eqref{4nls} is the linearized version, for instance, of the fourth-order cubic nonlinear equation
\begin{equation}\label{4nlsn}
	i\partial_{t}u+\partial_{x}^2u+\mu\partial_{x}^4u+|u|^2u=0,
\end{equation}
which was introduced in \cite{kar} and \cite{kar1} to describe the propagation of intense laser beams in a bulk medium with Kerr nonlinearity when small fourth-order dispersion are taken into account. Several results concerning well-posedness for \eqref{4nlsn} may be found in \cite{fib} (see also subsequent references). Control and stabilization for \eqref{4nlsn} have already appeared in \cite{caca}.

Equation \eqref{4nls} also serves as the linear version of the more general equation
\begin{equation}\label{4nlsn1}
	i\partial_{t}u+\partial_{x}^2u+\mu\partial_{x}^4u+F=0,
\end{equation}
with
$$
F=\frac{1}{2}|u|^2u+\mu\left( \frac{3}{8}|u|^4u+\frac{3}{2}(\partial_{x}u)^2\overline{u}+|\partial_{x}u|^2u+\frac{1}{2}u^2\partial_{x}^2\overline{u}+2|u|^2\partial_{x}^2u\right),
$$
which describes the 3-dimensional motion of an isolated vortex filament embedded in an inviscid incompressible fluid filling an infinite region. Sharp results concerning local well-posedness in Sobolev spaces were proved in  \cite{huo}.

In order to set \eqref{4nls} as in \eqref{BO} we define $\mathcal{A}=i(\partial_{x}+\mu\partial_{x}^3)$, so that $a(k)=-k+\mu k^3$. Thus we may consider the equation
\begin{equation}\label{33}
	\partial_{t}u-\partial_{x}\mathcal{A}u=Gh.
\end{equation}
We promptly see that the mass is also conserved by the flow of \eqref{33} and
$$
|a(k)|\leq C|k|^3
$$
for some constant $C>0$ and any $k\in\mathbb{Z}$. Also, we easily check that $(H1)$ holds and $\lambda_{k}=-k^2+\mu k^4$. See Figure \ref{fig3}.
   \begin{figure}[h!]
	\begin{center}
		\includegraphics[width=15cm]{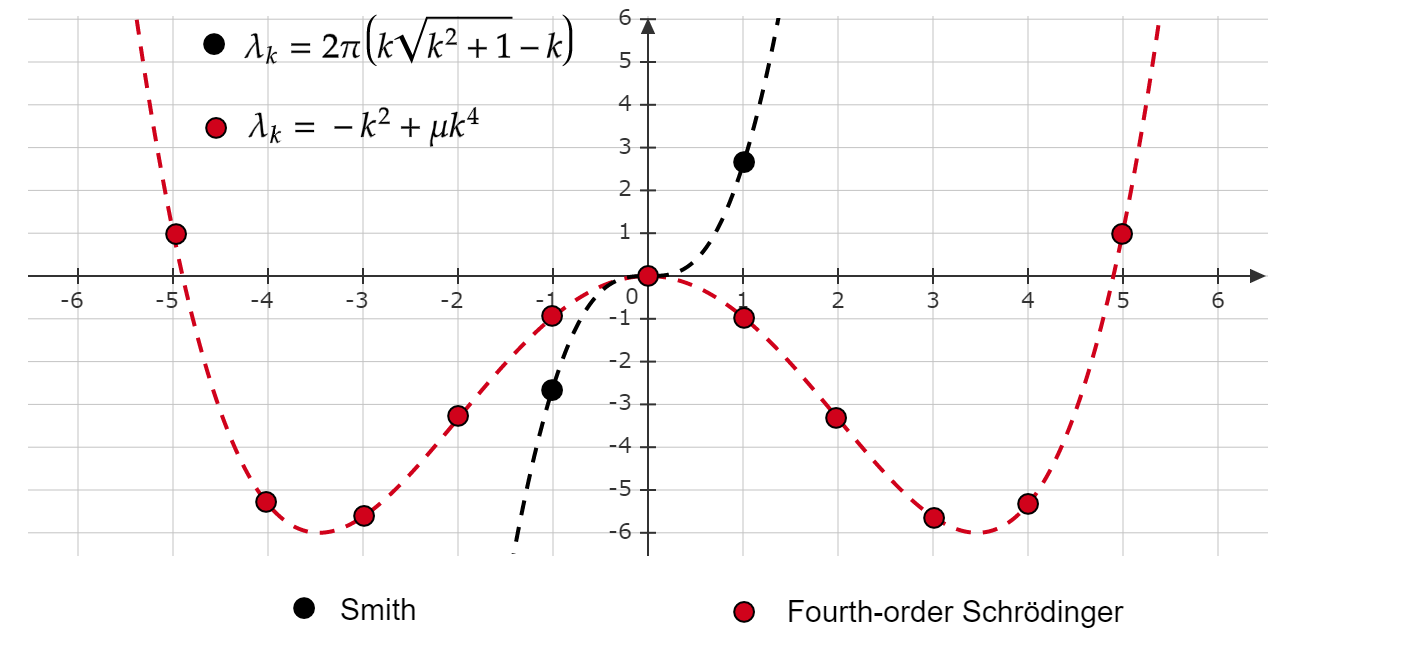}
		\caption{Dispersion of $\lambda_{k}$'s for
		the Smith and fourth-order	Schr\"odinger equations with $\mu>0.$}\label{fig3}
	\end{center}
\end{figure}
 Note if $\mu<0$ then the even polynomial $p(y)= -y^2+\mu y^4$ has no nontrivial roots, implying that $\lambda_{k}, k\neq0$ are double eigenvalues and $(H4)$ holds with $n_0=1$ and $k_1^*=1$. On the other hand, if $\mu>0$ then $p(y)$ has the nontrivial roots $\pm 1/\sqrt{\mu}$; hence, if $k_1^*$ is the less integer satisfying $1/\sqrt{\mu}\leq k_1^*$, we see that $(H4)$ holds with $n_0=4$.  

It is also clear that $(H5)$ also holds. Even more, we may check that
 $$
\displaystyle{\lim_{k\rightarrow +\infty}| (k+1) a(k+1)- k a(k)|=+\infty}.
$$
As a consequence, we   may now apply Remark \ref{partic} to conclude that Theorem \ref{ControlLag} holds for any $T>0$. Consequently, Theorems  \ref{st351} and \ref{estabilization} also hold.

\subsection{The linearized dispersion-generalized Benjamin-Ono equation} In this subsection we investigate the control and stabilization properties of the linearized dispersion-generalized Benjamin-Ono (LDGBO) equation, which contains fractional-order spatial derivatives on a periodic domain,
\begin{equation}\label{dgBO}
	\partial_{t}u+\partial_{x}D^{\alpha}u=0,\;\;\;x\in \mathbb{T},\;\;t\in\mathbb{R},
\end{equation}
where $\alpha>0,$ $u$ is a real-valued function and the Fourier multiplier operator $D^{\alpha}$ is defined as
\begin{equation}\label{OdgBO}
	\widehat{D^{\alpha}u}(k)=|k|^{\alpha}\widehat{u}(k),\;\;\;\text{for all}\;k\in \mathbb{Z}.
\end{equation}

When $\alpha\in (1,2),$ the dispersion generalized Benjamin-Ono (DGBO) equation 
\begin{equation}\label{DGBO}
	\partial_{t}u+\partial_{x}D^{\alpha}u+u\partial_{x}u=0,\;\;\;x\in \mathbb{R},\;\;t>0,
\end{equation}
defines  a family of equations which models vorticity waves in coastal zones \cite{Shrira}. The end points $\alpha=1$ and $\alpha=2$ corresponds to the well-known Benjamin-Ono and KdV equations, respectively. In this sense \eqref{DGBO} defines 
a continuum of equations of dispersive strength intermediate to two celebrated models. Regarding control and stabilization properties, the author in \cite{Flores} proved that the LDGBO equation with $\alpha \in (1,2)$ is exactly controllable in $H^{s}_{p}(\mathbb{T})$ with $s\geq 0$ and exponentially stabilizable in $L^{2}_{p}(\mathbb{T}).$
Here we extend these results to the (periodic) Sobolev space
$H^{s}_{p}(\mathbb{T})$ with $s\in \mathbb{R},$ for any $\alpha>0.$

In fact, we consider the operator  $\mathcal{A}$ in \eqref{BO} defined by $\mathcal{A}=-D^{\alpha}.$ Therefore, $a(k)=-|k|^{\alpha}$ and it is easy to verify that $$|a(k)|\leq |k|^{\alpha},$$
and
$$a(k)=a(-k),$$
for any $k\in \mathbb{Z}.$ Hence, $(H1)$ holds with $\lambda_{k}=-k|k|^{\alpha}.$ Using the L'Hospital rule we can prove that 
$$\lim_{y\to +\infty}y^{\alpha+1}\left(\left(1+\frac{1}{y}\right)^{\alpha+1}-1\right)=+\infty,\;\;\text{for any}\;\alpha>0.$$
From this, we conclude that 
$$
\displaystyle{\lim_{k\rightarrow +\infty}| (k+1) a(k+1)- k a(k)|=\lim_{k\rightarrow +\infty}\left( (k+1)^{\alpha+1}- k^{\alpha+1}\right)=  +\infty}.
$$
Thus, we can apply Remark \ref{partic1} to infer that Theorem \ref{ControlLa} holds for any $T>0$. Consequently, Theorems  \ref{st351} and \ref{estabilization} also hold in this particular case.

Finally, we point out the authors in \cite{Flores Oh and Smith} developed a dissipation-normalized Bourgain-type space, which simultaneously gains smoothing properties from the dissipation and dispersion present in the equation, to show that the nonlinear DGBO equation on a periodic setting is well-posed and local exponentially stable in $L^{2}_{p}(\mathbb{T}).$ Extending these results to the Sobolev space $H^{s}_{p}(\mathbb{T})$ with $s >0$ is a challenging task. This is an open problem.

\subsection{Higher-order Schr\"odinger equation}
In this section we consider the following higher-order Schr\"odinger equations
\begin{equation}\label{hnls1}
	i\partial_{t}u+\alpha_2\partial_{x}^2u+\alpha_4\partial_{x}^4u+\ldots+\alpha_{2m}\partial_{x}^{2m}u=0
\end{equation}
and
\begin{equation}\label{hnls}
	i\partial_{t}u+\alpha_2\partial_{x}^2u-i\alpha_3\partial_{x}^3u+\alpha_4\partial_{x}^4u-i\alpha_5\partial_{x}^5u+\ldots-i\alpha_{2m+1}\partial_{x}^{2m+1}u=0,
\end{equation}
where $m$ is a positive integer and $\alpha_j$ are real constants with $\alpha_2\neq0$,  $\alpha_{2m}\neq0$, and $\alpha_{2m+1}\neq0$. Equations \eqref{hnls1} and \eqref{hnls} are the linearized versions of an infinite hierarchy of nonlinear Schr\"odinger equations (see \cite{ank}). Thus, here we consider the control equation
\begin{equation}
	\partial_{t}u-\partial_{x}\mathcal{A}_{2m+j}u=Gh,
\end{equation}
where
$$
\mathcal{A}_{2m+j}=
\begin{cases}
	i\alpha_2\partial_{x}+i\alpha_4\partial_{x}^3+\ldots+i\alpha_{2m}\partial_{x}^{2m-1}, \quad \mbox{if}\, j=0,\\
	i\alpha_2\partial_{x}+\alpha_3\partial_{x}^2+i\alpha_4\partial_{x}^3+\alpha_5\partial_{x}^4+\ldots+\alpha_{2m+1}\partial_{x}^{2m}, \quad \mbox{if}\, j=1.
\end{cases}
$$
The symbol associated with $\mathcal{A}_{2m+j}$ is
$$
a_{2m+j}(k)=
\begin{cases}
	-\alpha_2k+\alpha_4k^3+\ldots+\alpha_{2m}(-1)^mk^{2m-1},\quad \mbox{if}\, j=0,\\
	-\alpha_2k-\alpha_2k^2+\alpha_4k^3+\alpha_5k^4+\ldots+\alpha_{2m+1}(-1)^mk^{2m} , \quad \mbox{if}\, j=1.
\end{cases}
$$
It is clear that 
$$
|a_{2m+j}(k)|\leq C|k|^{2m-1+j},
$$
for some $C>0$ and $|k|$ large enough.

Let us show that in the cases $j=0$  and $j=1$ we can apply Theorems \ref{ControlLag} and \ref{ControlLa}, respectively. Indeed, assume first $j=0$. It is easy to see that $(H1)$ holds where the eigenvalues $i\lambda_{k}$ are  such that
$$
\lambda_{k}=-\alpha_2k^2+\alpha_4k^4+\ldots+(-1)^m\alpha_{2m}k^{2m}.
$$
The polynomial $p_{2m}(y)=-\alpha_2y^2+\alpha_4y^4+\ldots+(-1)^m\alpha_{2m}y^{2m}$ is even and goes to either $+\infty$ or $-\infty$ as $|y|\to+\infty$ (according to $m$ and the sign of $\alpha_{2m})$. Thus $(H4)$ and $(H5)$ holds with $n_0=2m$ and $k_1^*$ sufficiently large. 

Assume now $j=1$. In this case we have
$$
\lambda_k=-\alpha_2k^2-\alpha_2k^3+\alpha_4k^4+\alpha_5k^5+\ldots+\alpha_{2m+1}(-1)^mk^{2m+1}.
$$
Note that the polynomial $$p_{2m+1}(y)=-\alpha_2y^2-\alpha_2y^3+\alpha_4y^4+\alpha_5y^5+\ldots+\alpha_{2m+1}(-1)^my^{2m+1}$$ has different limits ($+\infty$ or $-\infty$) as $y\to+\infty$ or $y\to-\infty$ (according to $m$ and the sign of $\alpha_{2m+1}$). Hence $(H2)$ and $(H3)$ holds with $n_0=2m+1$ and $k_1^*$ sufficiently large. \\

Furthermore, either in the case $j=0$ or $j=1,$ it can be showed that  the eigenvalues $\{i \lambda_{k}\}$ satisfies the \textquotedblleft asymptotic gap condition". Hence, we obtain the exact controllability for any  $T>0$ and Theorems  \ref{st351}$-$\ref{estabilization} hold as well.

\section{Concluding Remarks}\label{conc-rem}

In this work, we have showed two different criteria to prove that a linearized family of dispersive equations on a periodic domain is exactly controllable and exponentially stabilizable with any given decay rate in the Sobolev space $H_{p}^{s}(\mathbb{T})$ with $s\in \mathbb{R}.$  We have applied these results to prove exact controllability and exponential stabilization for the linearized Smith equation and Schr\"odinger-type equations on a periodic domain. In a forthcoming paper we plan to use these results to prove some fundamental properties like the propagation of compactness, the unique continuation property and the propagation of smoothness for the solutions of the nonlinear Smith equation in order to show that it is exactly controllable and exponentially stabilizable on a periodic domain. That is the adequate approach to prove exact controllability and exponential stabilization for nonlinear PDE's of dispersive type (see \cite{14, Laurent, Linares Rosier, Laurent Linares and Rosier, Manhendra and Francisco 2}). However, the symbol of the linear part associated to the Smith equation creates extra difficulty to prove the unique continuation property on a periodic domain.  This work is in progress.


\subsection*{\textbf{Acknowledgment}}
F. J. Vielma Leal is partially supported by FAPESP/Brazil grant 2020/14226-4. A. Pastor is partially supported by  FAPESP/Brazil grant 2019/02512-5 and CNPq/Brazil grant 303762/2019-5.
The authors would like to thank Prof. Roberto Capistrano-Filho for many helpful discussions and suggestions to complete this work.



\begin{thebibliography}{99}
\small




\bibitem{Abdelouhab Bona Felland and Saut} L. Abdelouhab, J. L. Bona, M. Felland, J-C. Saut, {\em Nonlocal models for nonlinear, dispersive waves}, Phys. D. {\bf 40}  (1989) 360-392.

\bibitem{ank} A. Ankiewicz, D. J. Kedziora, A. Chowdury, U. Bandelow, N. Akhmediev, {\em Infinite hierarchy of nonlinear Schr\"odinger equations and their solutions}, Phys. Rev. E. {\bf 93}  (2016), 012206.



\bibitem{Ball and Slemrod} J. M. Ball, M. Slemrod, {\em Nonharmonic Fourier series and the stabilization of
distributed semi-linear control systems}, Comm. Pure Appl. Math. {\bf 32} 4 (1979) 555--587.

\bibitem{caca} R. Capistrano-Filho, M. Cavalcante,  {\em Stabilization and control for the biharmonic Schr\"{o}dinger equation}, Appl. Math. Optim.  (2019), https://doi.org/10.1007/s00245-019-09640-8.

\bibitem{Capistrano  y Andressa} R. Capistrano-Filho, A. Gomes, {\em Global control aspects for long waves in nonlinear dispersive media}, preprint arXiv:2013.00921v1.

\bibitem{3} T. Cazenave, H. Haraux, {\em An introduction to Semilinear Evolutions Equations}, John Wiley and Sons, Inc., Revised edition, Clarendon Press-Oxford (1998).

\bibitem{Coron} J.-M. Coron, {\em Control and Nonlinearity}, in: Mathematical surveys and Monographs, vol. 136, Amer. Math. Soc., 2007.


\bibitem{Coron Crepau} J.-M. Coron, E. Cr\'epeau, {\em Exact boundary controllability of a nonlinear KdV equation
with a critical length}, J. Eur. Math. Soc. {\bf 6} (2004) 367--398.


\bibitem{Dehman Gerard Lebeau} B. Dehman, P. G\'{e}rard, G. Lebeau, {\em Stabilization and control for the nonlinear Schr$\ddot{o}$dinger equation on a compact surface}, Math. Z. {\bf 254} (2006) 729--749.


\bibitem{fib} G. Fibich, B. Ilan, G. Papanicolaou,  {\em Self-focusing with fourth-order dispersion},  SIAM J. Appl. Math. {\bf 62} (2002), 1437--1462.


\bibitem{Flores} C. Flores, {\em Control and stability of the linearized dispersion-generalized Benjamin-Ono equation on a periodic domain},  Math. Cont. Sig. Systems {\bf 3} (2018),  Art. 13, 16 pp. 

\bibitem{Flores Oh and Smith} C. Flores, S. Oh, D. Smith, {\em Stabilization of dispersion-generalized Benjamin-Ono}, arXiv:1709.10224v1  (2017).



\bibitem{9} C. Heil, {\em A Basis Theory Primer}, Expanded Edition, Applied and Numerical Harmonic Analysis, Birkhauser, Birkhäuser/Springer, New York, 2011.

\bibitem{huo} Z. Huo, Y. Jia, {\em A refined well-posedness for the fourth-order nonlinear Schr\"odinger equation related to the vortex filament}, Com. Part. Dif. Eq. {\bf 32} (2007), 1493--1510.

\bibitem{8} A. E. Ingham, {\em Some trigonometrical Inequalities with applications in the theory of series}, Math. Z. {\bf 41} (1936), 367--379.

\bibitem{iorio} R. J. Jr. Iorio, {\em KdV, BO and friends in weighted Sobolev spaces}, Functional-analytic methods for partial differential equations (Tokyo, 1989), 104--121, Lecture Notes in Math., 1450, Springer, Berlin, 1990.

\bibitem{6} R. J. Jr. Iorio, V. Magalh\~{a}es, {\em Fourier Analysis and Partial Differential Equations}, Cambrige Universiy Press (2001).

\bibitem{kar} V.I. Karpman, {\em Stabilization of soliton instabilities by higher-order dispersion: fourth order nonlinear Schr\"odinger-type equations},  Phys. Rev. E {\bf 53} (1996), 1336--1339.

\bibitem{kar1}  V. I. Karpman,  A.G. Shagalov, {\em Stability of soliton described by nonlinear Schrödinger type equations with higher-order dispersion},  Physica D \textbf{144} (2000), 194--210.



\bibitem{7} V. Komornik,  P. Loreti, {\em Fourier Series in Control Theory}, Springer Monographs in Mathematics (2005).

\bibitem{Laurent Camille} C. Laurent, {\em Internal control of the Schr\"odinger equation},  Math. Cont. $\&$ Related Fields,  {\bf 4} 2 (2014) 161-186.

\bibitem{Laurent} C. Laurent, {\em Global controllability and stabilization for the nonlinear Schr\"odinger equation on an interval}, ESAIM Control Optm. Cal. Var. {\bf 16} 2 (2010) 356--379.

\bibitem{14} C. Laurent, L. Rosier, B. -Y. Zhang, {\em Control and stabilization of the Korteweg-de Vries equation on a periodic domain}, Comm. Part. Dif. Eq.,  {\bf 35} 4 (2010) 707--744.




\bibitem{Laurent Linares and Rosier} C. Laurent, F. Linares, L. Rosier, {\em Control and Stabilization of
the Benjamin-Ono Equation in $L^{2}(\mathbb{T})$}, Arch. Rational Mech. Anal. {\bf 218} 3 (2015) 1531--1575.





\bibitem{1} F. Linares, J. H. Ortega, {\em On the controllability and stabilization of the linearized Benjamin-Ono equation}, ESAIM: Cont. Optm. Cal. Var. {\bf 11} 2 (2005), 204--218.

\bibitem{Linares Rosier} F. Linares, L. Rosier, \textit{Control and Stabilization of the Benjamin-Ono Equation on a Periodic Domain}, Trans.  Amer. Math. Soc., {\bf 367} 7 (2015) 4595--4626.



\bibitem{Liu} K. Liu, \textit{ Locally distributed control and damping for the conservative systems}, SIAM J. Cont. Optim., 35 (1997), 1574-1590.



\bibitem{Menzala Vasconcellos Zuazua} G. P. Menzala, C. F. Vasconcellos, E. Zuazua, {\em Stabilization of the Korteweg-de Vries
equation with localized damping}, Quart. Appl. Math., {\bf 60} 1 (2002) 111--129.

\bibitem{Micu Ortega Rosier and Zhang} S Micu, J. Ortega, L. Rosier, B-Y. Zhang,{\em Control and Stabilization of a family
of Boussinesq Systems}, Disc. and Cont. Dyn. Syst. {\bf 24} 2 (2009) 273--313.



\bibitem{Manhendra and Francisco} M. Panthee, F. Vielma Leal,  {\em On the controllability and stabilization of the linearized Benjamin equation on a periodic domain}, Nonlinear Analysis.: Real World Applications, {\bf 51} (2020) 102978.

\bibitem{Manhendra and Francisco 2} M. Panthee, F. Vielma Leal,  {\em On the controllability and stabilization of the Benjamin equation on a periodic domain}, Annales de l'Institut Henri Poincar\'{e}-Analyse non lin\'{e}aire, \text{http://doi.org/10.1016/j.anihpc.2020.12.004}




\bibitem{4} A. Pazy, {\em Semigroups of Linear Operators and Applications to Partial Differential Equations}, Springer-Verlag, New York Inc (1983).

\bibitem{Rosier Lionel Zhang} L. Rosier, B. -Y. Zhang, {\em Local exact controllability and stabilizability of the nonlinear Schr$\ddot{o}$dinger equation on a bounded interval}, SIAM J. Cont. Optim., {\bf 48} 2 (2009) 972-992.

\bibitem{Rosier Lionel Zhang 2} L. Rosier, B. -Y. Zhang, {\em Control and stabilization of the nonlinear Schr$\ddot{o}$dinger equation on rectangles}, Math. Models and Meth. in App. Sciences, {\bf 20} 12 (2010) 2293-2347.


\bibitem{Rosier 1} L.  Rosier, {\em Exact boundary controllability for the Korteweg-de Vries equation on a bounded
domain}, ESAIM: Cont. Optm. Calc. Var., {\bf 2} (1997) 33--55.

\bibitem{Rosier and Zhang 2} L. Rosier, B. -Y.Zhang, {\em Global stabilization of the generalized Korteweg-de Vries equation}, SIAM J. Cont. Optim., {\bf 45} 3 (2006) 927--956.

\bibitem{Rudin} W. Rudin, {\em Functional Analysis}, Mc-Graw Hill, Inc. Second Edition (1991).

\bibitem{Russell} D. L. Russell, {\em Controllability and stabilizability theory for linear partial differential equations: recent progress and open questions},
SIAM Review, {\bf 20} 4 (1978) 639--739.

\bibitem{Russell and Zhang} D. L. Russell, B. -Y. Zhang, {\em  Controllability and stabilizability of the thrid-order linear dispersion equation on a periodic domain}, SIAM J. Cont. and Optm,  {\bf 31} 3 (1993) 659--676.

\bibitem{10} D. L. Russell, B. -Y. Zhang, {\em Exact controllability and stabilizability of the Korteweg-de Vries equation}, Trans. Amer. Math. Soc.,  {\bf 348} 9 (1996) 3643--3672.


\bibitem{Shrira} V. I. Shrira, V. V. Voronovich, {\em
 Nonlinear dynamics of vorticity waves in the coastal zone}. J Fluid Mech. {\bf 326} (1996) 181--203.


\bibitem{Slemrod} M. Slemrod,   \textit{A note on complete controllability and stabilizability for linear control systems in Hilbert space}, SIAM J. Control,  {\bf 12} 3 (1974) 500--508.


\bibitem{Smith} R. Smith, \textit{Nonlinear Kelvin and continental-shelf waves}, J. Fluid Mech. {\bf 57} (1972) 379-391.



\bibitem{Zhang 1} B. -Y. Zhang, {\em Exact boundary controllability of the Korteweg-de Vries equation}, SIAM J. Cont.
Optm., {\bf 37} 2 (1999), 543--565.
\end{thebibliography}
\end{document}